\tikzset{anchorbase/.style={baseline={([yshift=-0.5ex]current bounding box.center)}},
  tinynodes/.style={font=\tiny,text height=0.75ex,text depth=0.15ex}
}
\tikzstyle directed=[postaction={decorate,decoration={markings,
    mark=at position #1 with {\arrow{>}}}}]
\tikzstyle rdirected=[postaction={decorate,decoration={markings,
    mark=at position #1 with {\arrow{<}}}}]
\newtheorem{theorem}{Theorem}[section]
\newtheorem{lemma}[theorem]{Lemma}
\newtheorem{definition}[theorem]{Definition}
\newtheorem{remark}[theorem]{Remark}
\newtheorem{proposition}[theorem]{Proposition}
\newcommand{\fb}{\mathfrak{b}}
\newcommand{\fg}{\mathfrak{g}}
\newcommand{\fh}{\mathfrak{h}}
\newcommand{\re}{\mathrm{e}}
\newcommand{\rf}{\mathrm{f}}
\newcommand{\rI}{\mathrm{I}}
\newcommand{\cA}{\mathcal{A}}
\newcommand{\cF}{\mathcal{F}}
\newcommand{\cH}{\mathcal{H}}
\newcommand{\cO}{\mathcal{O}}
\newcommand{\cS}{\mathcal{S}}
\newcommand{\bC}{\mathbb{C}}
\newcommand{\bH}{\mathbb{H}}
\newcommand{\bQ}{\mathbb{Q}}
\newcommand{\bR}{\mathbb{R}}
\newcommand{\bZ}{\mathbb{Z}}
\newcommand{\OPB}{\hat{\mathrm{B}}}
\newcommand{\OPE}{\hat{\mathrm{E}}}
\newcommand{\OPF}{\hat{\mathrm{F}}}
\newcommand{\OPK}{\hat{\mathrm{K}}}
\newcommand{\OPL}{\hat{\mathrm{L}}}
\newcommand{\oneh}{\nicefrac{1}{2}}
\newcommand{\threeh}{\nicefrac{3}{2}}
\newcommand{\ellh}{\nicefrac{\ell}{2}}
\newcommand{\ellph}{\nicefrac{(\ell+1)}{2}}
\newcommand{\Uvmod}{U_v(\fg)\text{-}\mathrm{mod}}
\newcommand{\Uqmod}{U_{\mathrm{q}}\text{-}\mathrm{mod}}
\begin{document}
\vbadness=10001
\hbadness=10001
\title[Combinatorial Fock spaces and quantum symmetric pairs]{Combinatorial Fock spaces and quantum symmetric pairs}

\author[M. Ehrig and K. Gan]{Michael Ehrig and Kaixuan Gan}

\address{M.E.: Beijing Institute of Technology, School of Mathematics and Statistics, Liangxiang Campus of Beijing Institute of Technology, Fangshan District, 100288 Beijing, China}
\email{michael.ehrig@bit.edu.cn}

\address{K.G.: Beijing Institute of Technology, School of Mathematics and Statistics, Liangxiang Campus of Beijing Institute of Technology, Fangshan District, 100288 Beijing, China}
\email{3120191408@bit.edu.cn}

\begin{abstract}
A way to construct the natural representation of the quantized affine algebra $U_v(\hat{\mathfrak{sl}}_\ell)$ is via the deformed Fock space by Misra and Miwa. This relates the classes of Weyl modules for $U_{\mathrm{q}}(\mathfrak{sl}_N)$ were ${\mathrm{q}}$ is a root of unity to the action of $U_v(\hat{\mathfrak{sl}}_\ell)$ as $N$ tends towards infinity. In this paper we investigate the situation outside of type $A$. In classical types, we construct embeddings of the Grothendieck group of finite dimensional $U_{\mathrm{q}}(\mathfrak{g})$-modules into Fock spaces of different charges and define an action of an affine quantum symmetric pair that plays the role of the quantized affine algebra. We describe how the action is related to the linkage principal for quantum groups at a root of unity and tensor product multiplicities.
\end{abstract}

\maketitle

\tableofcontents

%
%
\section{Introduction}

The Fock space (of charge zero) $\mathrm{F}_0$ arises in mathematical physics. In the context of representation theory it gives a particularly nice realisation of a representation for an affine Kac-Moody algebras (e.g. \cite[Chapter 14]{Kac}). With a basis labelled by all partitions, the basis elements can naturally be interpreted as the classes of irreducible finite dimensional highest weight modules $L(\lambda)$ for $\mathfrak{gl}_N(\bC)$ with dominant polynomial highest weights when $N$ tends towards infinity. One major step is to consider $\cF_0=\mathrm{F}_0\otimes_\bQ \bQ(v)$ the $\bQ(v)$-deformation of $\mathrm{F}_0$. Here and for the rest of the paper $v$ is an indeterminate. In this case Misra and Miwa \cite{MisraMiwa}, following the work of Hayahsi \cite{Hayashi} defined an action of the quantized universal enveloping algebra $U_v(\hat{\mathfrak{sl}}_\ell)$ on $\cF$. The action is of a particularly nice form in the sense that the image of a partition under a Chevalley generators has, as coefficients, monomials in $v$ that are combinatorially easy to describe.

In turn $\cF_0$ can be realized via the affine Hecke algebra, in this way it obtains the structure of a KL-module depending on a parameter $\ell$. This leads to a second natural basis, the Kazhdan-Lusztig basis. One now adopts the point of view that the standard basis of partitions corresponds to classes of Weyl modules $\Delta_{\mathrm{q}}(\lambda)$ for the quantum group $U_{\mathrm{q}}(\mathfrak{gl}_N)$ specialized at a $2\ell$'s root of unity and $N$ tending towards infinity. The Kazhdan-Lusztig basis corresponds to the classes of simple modules in the root of unity case and the transition matrix between the two basis evaluated at $1$ gives the multiplicities for the modules in terms of evaluations of parabolic affine Kazhdan-Lusztig modules, see \cite{LT}. From a Lie theoretic point of view the coefficients for the action of Chevalley generators can be connected to Shapovalov determinants as shown in \cite{RT}. Note that here and in the rest of the paper ${\mathrm{q}}$ is a root of unity such that ${\mathrm{q}}^2$ has order $\ell$.

The combinatorial Fock space was defined in \cite{LRS}. It is isomorphic, after extension of scalars, to the Grothendieck group of finite dimensional representations for $U_{\mathrm{q}}(\mathfrak{g})$, where $\fg$ is an arbitrary finite dimensional semi-simple complex Lie algebra. This comes naturally equipped with a basis corresponding to dominant weights or via the isomorphism to the classes of Weyl modules. Depending on the order $\ell$, \cite{LRS} establishes the structure of a KL-module on this space. This in turn yields a Kazhdan-Lusztig type basis that corresponds to the classes of simple modules with the transition matrix being given by parabolic affine Kazhdan-Lusztig polynomials. In type $A_N$ these Fock space models can be embedded in each other for growing $N$ and in the direct limit give the space $\cF_0$ from above.

This paper is a step to generalize the setup in the following sense. Outside of type $A$, we embed the combinatorial Fock space into a traditional Fock space, this plays the role of $\cF_0$. This is done via the combinatorics of certain sequences that are a natural generalisation of Young tableaux in type $A$. We then define an action of an affine quantum symmetric pair related to $U_v(\hat{\mathfrak{sl}}_\ell)$ on this Fock space. Such algebras were studied and classified in the non-affine situation (see \cite{Letzter1} and \cite{Letzter2}) and in the Kac-Moody setting (see \cite{Kolb}). This action is compatible with the linkage principal for the quantum group at a root of unity and describes certain tensor product multiplicities for Weyl modules in a similar way as the action of the quantum affine algebra does in type $A$. Roughly said the action of generators describes, in the Grothendieck group, the image of a Weyl module under the translation functor given by taking the tensor product with the ``natural'' representation for a fixed rank of $\fg$.

The structure of the paper is as follows. In Section \ref{sec:prelim} we recall the necessary definitions for weight combinatorics for quantum groups, the definition of the combinatorial Fock space from \cite{LRS}, and some facts about quantum groups at a root of unity and their representation theory. In Section \ref{sec:fockspace} we introduce the combinatorial definition of the analogue of $\cF_0$ in the type $A$ setting as a space of certain sequences as well as most of the combinatorics for sequences that are needed later on. We briefly recall the type $A$ situation translated into these combinatorics. We introduce some notions for affine Weyl groups and alcove combinatorics needed later on, as well as the affine quantum symmetric pair that acts outside of type $A$. In Section \ref{sec:typeC} we investigate the type $C$ case. This is the easiest case with the least amount of complications. We first specialize the combinatorics to this case, then relate linear operators on the Fock space to the linkage principal in type $C$ and then define the action of the affine quantum symmetric pair (Definition \ref{def:quantumsymoperatorsC}). In Theorem \ref{thm:actionanddecompC} we describe the relationship between the action and tensor product multiplicities and in Proposition \ref{prop:operatorsasdecomp} the analogue of letting $N$ tend to infinity here. Section \ref{sec:typeB} has then the same structure but for type $B$. Note that here one has to distinguish between the case of $\ell$ even and $\ell$ odd. The definition of the action can be found in Definitions \ref{def:quantumsymoperatorsB} and \ref{def:quantumsymoperatorsB2}. The relationship with tensor product multiplicities is divided into Theorems \ref{thm:actionanddecompB1}, \ref{thm:actionanddecompB2}, \ref{thm:actionanddecompB3}, and \ref{thm:actionanddecompB4}. The situation of letting $N$ tend towards infinity is described in and before Proposition \ref{prop:operatorsasdecompBhint} for $\ell$ odd. The same results for $\ell$ even can be derived in a similar way. Finally in Section \ref{sec:typeDbeyond} we give a sketch of how to apply this construction to type $D$ and what the corresponding results look like. We also elaborate on why the construction is not fully detailed in the paper.

\subsection*{Acknowledgements}\label{subsec:acknowledgements}

We thank Daniel Tubbenhauer for comments and remarks. This work was supported by the National Natural Science Foundation of China under Grant No. 12050410261.



\section{Preliminaries} \label{sec:prelim}

In this part we review the fundamental definitions for the three main objects involved. The weight combinatorics of semi-simple Lie algebras, the combinatorial Fock space, and the quantum group at a root of unity corresponding to a non-exceptional semi-simple complex Lie algebra. Throughout the paper we use an integer $\ell$ that is the order of a root of unity in Section \ref{sec:quantumgroup}. In general we assume
\[
\ell > 3.
\]
This is done to avoid special cases of quantum symmetric pairs in case of small ${\ell}$.

\subsection{Lie algebras and weight combinatorics}

We fix $\fg$ a finite dimensional semi-simple complex Lie algebra. In the following we are only interested in the non-exceptional cases. Hence from now on we make the assumption that $\mathfrak{g}$ is of type $A_N$, $B_N$, $C_N$ or $D_N$. We denote this type by $X_N$ in what follows. To not having to deal with the various isomorphisms in small ranks we assume $N>1$ in type $B_N$, $N>2$ in type $C_N$, and $N>3$ in type $D_N$.

\begin{remark}
All constructions outside of Sections \ref{sec:typeC} and \ref{sec:typeB} can be made for exceptional Lie algebras. We go into more details why we are restricting to the classical cases in Section \ref{sec:typeDbeyond}.
\end{remark}

Fix a Cartan subalgebra $\fh$ and a Borel subalgebra $\fb$ containing $\fh$. With the choice of $\fh$ we denote by $\Phi$ the root system of $\fg$ and by $\Phi^+$ the positive roots with respect to $\fb$. By $W$ we denote the Weyl group corresponding to $\fg$. Fix a non-degenerate $W$-invariant bilinear form $(-,-):\fh^\ast \times \fh^\ast \rightarrow \bC$. For $\alpha \in \Phi^+$, the corresponding coroot is defined by $\alpha^\vee = 2\alpha / (\alpha,\alpha)$. We label the simple roots in $\Phi^+$ by $\alpha_1,\ldots,\alpha_N$. The lattice of integral weights is denoted by
\[
X = \{ \lambda \in \fh^\ast \mid (\lambda,\alpha^\vee) \in \bZ, \text{ for } \alpha \in \Phi^+ \}.
\]
The elements $\omega_i\in X$ such that $(\omega_i,\alpha_j^\vee) = \delta_{ij}$ are called the fundamental weights. Inside the weight lattice we fix the set of dominant weights
\[
X^+ = \{ \lambda \in X \mid (\lambda,\alpha^\vee) \geq 0, \text{ for } \alpha \in \Phi^+ \}.
\]
On $X$ we have the action of the Weyl group $W$ with the reflection $s_\alpha$ given by $s_{\alpha}(\lambda) = \lambda - (\lambda,\alpha^\vee)\alpha$ for $\alpha \in \Phi^+$.

\subsection{The combinatorial Fock space}

Fix the element $\rho = \frac{1}{2} \sum_{\alpha \in \Phi^+} \alpha$ and the set of $\rho$-shifted dominant weights $X^+_\rho=X^+ +\rho$. We denote by $\bQ(v)$ the rational functions over $\bQ$. 

Following \cite[Section 1.1]{LRS}, the \emph{combinatorial Fock} space $\cF(X_N)$ of type $X_N$ is the $\bQ(v)$-vector space with basis $\{\underline{\lambda}=\lambda+\rho \mid \lambda \in X^+\}$. 

\begin{remark}
In \cite{LRS} the Fock space is originally defined to be generated as a $\bZ[v,v^{-1}]$-module by elements indexed by $X$ modulo relations, but \cite[Theorem 1.1]{LRS} shows that the elements indexed by $X^+$ form a basis as a free $\bZ[v,v^{-1}]$-module. We simple shift the indexing set of the basis elements by $\rho$ and extend scalars to the field $\bQ(v)$.
\end{remark}

We do not make use of the description, but it should be noted that one of the main and most intricate results of \cite{LRS} is to endow the combinatorial Fock space $\cF(X_N)$ with the structure of a KL-module. For this it is alternatively constructed by starting from the affine Hecke algebra. For this purpose an action of an affine Weyl group has to be introduced and this action depends on an integer $\ell$. Thus in contrast to \cite{LRS} we drop the label $\ell$ in the notation for the Fock space, as we do not make use of the description via affine Hecke algebras.

\subsection{Quantum group at a root of unity} \label{sec:quantumgroup}

Starting with a fixed semi-simple complex Lie algebra of type $X_N$ we define the corresponding quantized enveloping algebra, following \cite{LusztigQuantum}. For $1 \leq i,j \leq N$ we set $a_{ij} = (\alpha_j,\alpha_i^\vee) \in \bZ$. For $1 \leq i \leq N$ we fix $d_i = 1$ if $\alpha_i$ is short root and $d_i = 2$ if $\alpha_i$ is a long root. Let $v_i = v^{d_i} \in \bQ(v)$. This is extended to all roots, by setting $d_\alpha = d_i$ if $\alpha$ and $\alpha_i$ are in the same Weyl group orbit.

\begin{remark}
Note that in type $A_N$ and type $D_N$ all simple roots are short. For the non simply-laced cases we have the Dynkin diagrams
\[
B_N: \quad \dynkin{B}{} \quad \text{ and }\quad C_N: \quad \dynkin{C}{}.
\]
Hence in type $B_N$ there is precisely one short simple root $\alpha_N$ corresponding to the right most vertex in the diagram above, while in type $C_N$ there is precisely one long simple root, again corresponding to the right-most vertex in the diagram above. For $(d_1,\ldots,d_N)$, with $d_i$ corresponding to the $i$-th node from the left in the diagrams above, we have in type $B_N$ the vector $(2,2,\ldots,2,1)$ and in type $C_N$ the vector $(1,1,\ldots,1,2)$.
\end{remark}

To fix the notation for the quantum integers we define for $n \in \bZ_{\geq 0}$ and $k \in \bZ$ the following elements in $\bZ[v,v^{-1}]$
\[
[n]_v = \frac{v^n-v^{-n}}{v-v^{-1}},\,[n]_v! = \prod_{m=1}^n \frac{v^m-v^{-m}}{v-v^{-1}}, \text{ and } 
\left[ 
\begin{array}{c}
k \\ n
\end{array}
\right]_v = \prod_{m=1}^n \frac{v^{k+1-m} - v^{-k-1+m}}{v^m-v^{-m}}.
\]
We use the same notation if we substitute $v$ for $v_i$ or an element of a field.

The \emph{quantized enveloping algebra} $U_v(\fg)$ is the associative algebra over $\bQ(v)$ generated by elements $\{E_i,F_i,K_i^{\pm 1} \mid 1\leq i \leq N\}$ subject to the following relations for all $1\leq i,j \leq N$
\begin{enumerate}
\item $K_iK_i^{-1} = 1 = K_i^{-1}K_i$, $K_iK_j = K_j K_i$,
\item $K_iE_j = v_i^{a_{ij}}E_jK_i$, $K_iF_j = v_i^{-a_{ij}}F_jK_i$,
\item $E_iF_j - F_jE_i = \delta_{ij} \frac{K_i-K_i^{-1}}{v_i-v_i^{-1}}$ (commutator relation),
\item $\sum_{k=0}^{1-a_{ij}} (-1)^k E_i^{(1-a_{ij}-k)}E_jE_i^{(k)} = 0$, for $i \neq j$ (quantum Serre relations),
\item $\sum_{k=0}^{1-a_{ij}} (-1)^k F_i^{(1-a_{ij}-k)}F_jF_i^{(k)} = 0$, for $i \neq j$ (quantum Serre relations),
\end{enumerate}
where $E_i^{(k)} = E_i^k/[k]_{v_i}!$ and $F_i^{(k)} = F_i^k/[k]_{v_i}!$ are the divided powers.

For a $U_v(\fg)$-module $M$ and $\lambda \in X$ we fix the $\lambda$-weight space as
\[
M_\lambda = \left\lbrace m\in M \mid K_i m = v_i^{(\lambda,\alpha_i^\vee)}m\right\rbrace.
\]
We denote by $\Uvmod$ the full subcategory of finite dimensional $U_v(\fg)$-modules consisting of modules $M$ of type $1$, i.e. those finite dimensional modules satisfying $M=\bigoplus_{\lambda \in X} M_\lambda$. We denote by $[\Uvmod]$ the Grothendieck group of $\Uvmod$ and to avoid unnecessary clutter of notations, we assume that the scalars of the Grothendieck group are extended from $\bZ$ to $\bQ(v)$.

Following \cite{LusztigRootofUnity}, we fix the subring $A=\bZ[v,v^{-1}] \subset \bQ(v)$ and denote by $U_A(\fg)$ the $A$-form of $U_v(\fg)$, i.e. the $A$ subalgebra of $U_v(\fg)$ generated by divided powers $E_i^{(k)}$, $F_i^{(k)}$, $K_i^{\pm 1}$ and the elements
\[
\left[ 
\begin{array}{c}
K_i;c \\ k
\end{array}
\right]=\prod_{s=1}^k \frac{K_i v_i^{c+1-s}-K_i^{-1} v_i^{s-1-c}}{v_i^{s}-v_i^{-s}} \text{ for }c\in\bZ,k\in\bZ_{>0}.
\]
Note that for $k=1$ and $c=0$ this is exactly equal to $[E_i,F_i]$, while other elements of this form appear in generalised versions of commutator relation for divided powers.

Fix ${\mathrm{q}}\in\bC$ with ${\mathrm{q}}^{2}$ is primitive $\ell$-th root of unity and consider $U_{\mathrm{q}} = U_A(\fg) \otimes_A \bC$, where $v\in A$ acts by multiplication with ${\mathrm{q}}$ in $\bC$. For a $U_{\mathrm{q}}$-module $M$ and $\lambda \in X$ we set
\[
M_\lambda = \left\lbrace m\in M \mid K_i m = {\mathrm{q}}^{d_i(\lambda,\alpha_i^\vee)}m,\, \left[ 
\begin{array}{c}
K_i;0 \\ k
\end{array}
\right]m=
\left[ 
\begin{array}{c}
(\lambda,\alpha_i^\vee) \\ k
\end{array}
\right]_{{\mathrm{q}}^{d_i}}m \right\rbrace
\]
We refer to $U_{\mathrm{q}}$ simply as the \emph{quantum group at a root of unity}. 

In analogy to the generic case, we denote by $\Uqmod$ the full subcategory of finite dimensional $U_{\mathrm{q}}$-modules consisting of modules $M$ such that $M=\bigoplus_{\lambda \in X} M_\lambda$. We denote again by $[\Uqmod]$ the Grothendieck group and assume that the scalars are extended from $\bZ$ to $\bQ(v)$.

In both cases of $\Uvmod$ and $\Uqmod$ we have the character of a module given as $\mathrm{ch}(M) = \sum_{\lambda \in X} \mathrm{dim}(M_\lambda) \mathbf{e}^\lambda$ for formal symbols $\mathbf{e}^\lambda$ and the dimension taken over the respective fields $\bQ(v)$ and $\bC$. Replacing the dimension by the rank over $A$, one obtains the character of a $U_A(\fg)$-module that is free as an $A$-module.

In the generic case, for $\lambda\in X^+$, we denote by $\Delta_v(\lambda)=L_v(\lambda)$ the irreducible highest weight module of highest weight $\lambda$ and we fix $x_\lambda \in L_v(\lambda)$ a highest weight vector. Following \cite[Section 7]{Tanisaki}, this can be lifted to the $A$-form by defining $\Delta_A(\lambda)$ to be the $U_A(\fg)$-submodule of $\Delta_v(\lambda)$ generated by $x_\lambda$. Alternatively, \cite{APW} construct the module as a quotient of the Verma module defined for $U_A(\fg)$. Then $\Delta_A(\lambda)$ is a free $A$-module with $\Delta_v(\lambda)=\Delta_A(\lambda) \otimes_A \bQ(v)$. Especially one obtains, see \cite[Proposition 1.22]{APW}, 
\begin{align*}
\mathrm{ch}(\Delta_v(\lambda)) = \mathrm{ch}(\Delta_A(\lambda)),
\end{align*}
both of them given by Weyl's character formula. As noted in \cite[Remark 1.25]{APW} this does not follow the usual tradition from algebraic groups to construct Weyl modules. In those setting one would induce from the opposite Borel to obtain the dual Weyl module and then use the duality to obtain the Weyl module. This more involved approach has many advantages, but since we are only interested in the classes of Weyl modules in the Grothendieck group we use this simpler construction here.

Since $\Delta_A(\lambda)$ is free over $A$, we can set $\Delta_{\mathrm{q}}(\lambda)=\Delta_A(\lambda) \otimes_A \bC$ and naturally
\begin{align} \label{eq:equalcharacters2}
\mathrm{ch}(\Delta_{\mathrm{q}}(\lambda)) = \mathrm{ch}(\Delta_A(\lambda)) = \mathrm{ch}(\Delta_v(\lambda)).
\end{align}
The module $\Delta_{\mathrm{q}}(\lambda)$ is called the \emph{Weyl module} with highest weight $\lambda$. By construction it is a quotient of the corresponding Verma module $M_{\mathrm{q}}(\lambda)$ of highest weight $\lambda$ and $\Delta_{\mathrm{q}}(\lambda)$ has a unique irreducible quotient $L_{\mathrm{q}}(\lambda)$. Note that every simple object in $\Uqmod$ can be obtained in this way.

\begin{remark} \label{rem:weylarebasis}
Since $\Delta_{\mathrm{q}}(\lambda)$ has $L_{\mathrm{q}}(\lambda)$ as its head and by highest weight theory all other composition factors are of the form $L_{\mathrm{q}}(\mu)$ for $\lambda - \mu$ a positive, non-zero, sum of positive roots, the classes $[\Delta_{\mathrm{q}}(\lambda)]$ for $\lambda \in X^+$ form a basis of the Grothendieck group $[\Uqmod]$.
\end{remark}

The Weyl modules play the key role for our situation here. The following observation is well-known to experts, but since it is central to our arguments we formulate it here.

\begin{proposition} \label{prop:tensorproductdecomp}
Let $\lambda,\mu \in X^+$ and consider the tensor product decomposition
\[
\Delta_v(\lambda) \otimes_{\bQ(v)} \Delta_v(\mu) \cong \bigoplus_{i=1}^r \Delta_v(\nu_i)
\]
in $\Uvmod$ for some dominant weights $\nu_1,\ldots,\nu_r$. Then in the Grothendieck group $[\Uqmod]$ it holds
\[
[\Delta_{\mathrm{q}}(\lambda) \otimes_{\bC} \Delta_{\mathrm{q}}(\mu)] = \sum_{i=1}^r [\Delta_{\mathrm{q}}(\nu_i)].
\]
\end{proposition}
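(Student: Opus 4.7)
The strategy is to pass through the $A$-form $U_A(\fg)$, compare characters, and then use that classes of Weyl modules form a basis of the Grothendieck group.

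First I would consider the $U_A(\fg)$-module $M_A := \Delta_A(\lambda) \otimes_A \Delta_A(\mu)$, which is free over $A$ because both factors are. Since tensor products commute with base change, we have $M_A \otimes_A \bQ(v) \cong \Delta_v(\lambda) \otimes_{\bQ(v)} \Delta_v(\mu) \cong \bigoplus_{i=1}^r \Delta_v(\nu_i)$ by hypothesis, and $M_A \otimes_A \bC \cong \Delta_{\mathrm{q}}(\lambda) \otimes_{\bC} \Delta_{\mathrm{q}}(\mu)$ by the definition of $\Delta_{\mathrm{q}}$. Weight spaces are preserved under base change for modules that are free over $A$, so $\mathrm{ch}(M_A)$ (defined via ranks over $A$) coincides with both $\mathrm{ch}(\Delta_v(\lambda) \otimes \Delta_v(\mu))$ and $\mathrm{ch}(\Delta_{\mathrm{q}}(\lambda) \otimes \Delta_{\mathrm{q}}(\mu))$. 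Combining this with \eqref{eq:equalcharacters2} applied to each $\nu_i$ gives
\[
\mathrm{ch}(\Delta_{\mathrm{q}}(\lambda) \otimes_{\bC} \Delta_{\mathrm{q}}(\mu)) = \sum_{i=1}^r \mathrm{ch}(\Delta_v(\nu_i)) = \sum_{i=1}^r \mathrm{ch}(\Delta_{\mathrm{q}}(\nu_i)).
\]

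Next, I would promote this character equality to an equality of classes. By Remark \ref{rem:weylarebasis}, the classes $\{[\Delta_{\mathrm{q}}(\nu)]\}_{\nu \in X^+}$ form a $\bQ(v)$-basis of $[\Uqmod]$. On the other hand, the character map $\mathrm{ch}\colon [\Uqmod] \to \bQ(v)[X]$ is well-defined (characters are additive on short exact sequences), and the characters $\mathrm{ch}(\Delta_{\mathrm{q}}(\nu))$ are given by Weyl's character formula, so they are linearly independent (each $\nu \in X^+$ appears as a maximal weight of $\Delta_{\mathrm{q}}(\nu)$ with multiplicity one and of no other $\Delta_{\mathrm{q}}(\nu')$ with $\nu' \neq \nu$). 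Hence $\mathrm{ch}$ is injective on $[\Uqmod]$, and the character equality forces the equality of classes claimed in the proposition.

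The main subtlety, and the one place where a careful argument is needed, is the passage through the $A$-form: one has to know that $\Delta_A(\lambda) \otimes_A \Delta_A(\mu)$ is free over $A$ with the ``correct'' character and that base change to $\bC$ produces exactly $\Delta_{\mathrm{q}}(\lambda) \otimes_{\bC} \Delta_{\mathrm{q}}(\mu)$; this is standard but relies on freeness of the Weyl modules over $A$ from \cite{APW}. Everything else is formal consequences of the Weyl character formula and the basis property from Remark \ref{rem:weylarebasis}. Note that no statement about the module structure of $\Delta_{\mathrm{q}}(\lambda) \otimes \Delta_{\mathrm{q}}(\mu)$ itself (which need not decompose as a direct sum of Weyl modules at a root of unity) is being claimed; the decomposition only holds after passing to the Grothendieck group.
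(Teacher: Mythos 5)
Your proposal is correct and follows essentially the same route as the paper: pass to the $A$-form tensor product $\Delta_A(\lambda)\otimes_A\Delta_A(\mu)$, use freeness over $A$ to identify the characters of the generic and root-of-unity specializations, and then conclude via the fact that the Weyl classes form a basis of $[\Uqmod]$. The only difference is that you make explicit the injectivity of the character map (via the leading-term/upper-triangularity argument for Weyl characters), which the paper leaves implicit.
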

\begin{proof}
Consider the tensor product $\Delta_A(\lambda) \otimes_A \Delta_A(\mu)$. Then, by \eqref{eq:equalcharacters2}, it holds 
\[
\mathrm{ch}(\Delta_{\mathrm{q}}(\lambda) \otimes \Delta_{\mathrm{q}}(\mu)) = \mathrm{ch}(\Delta_A(\lambda) \otimes_A \Delta_A(\mu)) = \mathrm{ch}(\Delta_v(\lambda) \otimes_{\bQ(v)} \Delta_A(\mu))
\]
Since the classes of Weyl modules form a basis of $[\Uqmod]$ and \eqref{eq:equalcharacters2}, we thus obtain that the decomposition of the character in the generic case, which is just the tensor product decomposition, gives the decomposition in the Grothendieck group in the root of unity case.
\end{proof}

We only need the statement about the Grothendieck group from Proposition \ref{prop:tensorproductdecomp}, but a stronger statement in the category also holds.

\begin{remark}
With the same notations as in Proposition \ref{prop:tensorproductdecomp}, $\Delta_{\mathrm{q}}(\lambda) \otimes_{\bC} \Delta_{\mathrm{q}}(\mu)$ has a filtration with subquotient isomorphic to Weyl modules of the form $\Delta_{\mathrm{q}}(\nu_i)$ for $\nu_1,\ldots,\nu_r$ some dominant weights.

The existence of a filtration with Weyl modules as subquotient can be derived from the literature in different ways. In \cite{AST} (see arXiv-Appendix, Claim 3.10.1) this is worked out in the simply-laced case and for dual Weyl modules (hence one needs to apply a duality). In \cite[Theorem 3.3]{Paradowski} this can be found more general under the name of good filtrations, which then dualizes to a filtration by Weyl modules. To determine which Weyl modules appear in the filtration one uses the same argument about characters as in Proposition \ref{prop:tensorproductdecomp} above.
\end{remark}

In contrast to $\Uvmod$ which is a semi-simple category, $\Uqmod$ is not semi-simple. For a criterion to decide when two simple modules respectively two Weyl modules can be in the same block, one introduces an action of an affine reflection group on $X$. Namely for $\alpha \in \Phi^+$ define $\ell_\alpha = \ell / \mathrm{gcd}(\ell,d_\alpha)$. Then denote by $W_\ell$ the group generated by the affine reflections of the form
\[
s_{\alpha,k}\cdot \lambda = s_\alpha \cdot \lambda + k \ell_\alpha \alpha,
\]
for $\alpha \in \Phi^+$, $k\in\bZ$ and $w \cdot \lambda = w(\lambda +\rho) - \rho$ for $w\in W$ the \textit{dot} action of $W$ on $X$.

\begin{remark}
Note that in our situation $d_\alpha$ is either $1$ or $2$, hence in case that $\ell$ is odd $\ell_\alpha=\ell$ for all $\alpha$. In which case $W_\ell$ is the affine Weyl group attached to $W$, except that the action is scaled by a factor $\ell$ after the shift by $\rho$.
In case that $\ell$ is even $\ell_\alpha = \ell / 2$ for a long root $\alpha$. In this case the group $W_\ell$ is acting as the affine Weyl group for the dual root system, shifted by $\rho$ and scaled by a factor $\ell$.
\end{remark}

We call two weights $\lambda,\mu\in X^+$ linked, if there exists an element $w\in W_\ell$ such that $\lambda = w\cdot \mu$. This correlates to extensions between simple modules as follows.

\begin{theorem}{\cite[Theorem 4.3]{Andersen}}
Let $\lambda,\mu \in X^+$. If $\mathrm{Ext}^1(L_{\mathrm{q}}(\lambda),L_{\mathrm{q}}(\mu))\neq 0$ then $\lambda$ and $\mu$ are linked and not equal.
\end{theorem}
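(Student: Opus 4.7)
The plan is to reduce the $\mathrm{Ext}^1$-statement to the strong linkage principle for Weyl modules, and then to establish the latter via a quantum Jantzen sum formula.

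First I would argue that a non-split extension $0 \to L_{\mathrm{q}}(\mu) \to M \to L_{\mathrm{q}}(\lambda) \to 0$ forces $\lambda \neq \mu$ and, after possibly replacing $(\lambda,\mu)$ by the pair coming from a contravariant duality on $\Uqmod$, forces $L_{\mathrm{q}}(\mu)$ to be a composition factor of $\Delta_{\mathrm{q}}(\lambda)$. Indeed, one of $\lambda,\mu$ is not strictly below the other in the dominance order; assuming $\lambda\not\leq \mu$, the weight $\lambda$ is the unique maximal weight of $M$, so the universal property of Weyl modules (and Remark \ref{rem:weylarebasis}) yields a map $\Delta_{\mathrm{q}}(\lambda)\to M$ whose composition with $M\twoheadrightarrow L_{\mathrm{q}}(\lambda)$ is the standard quotient; non-splitness forces this map to be surjective, so $L_{\mathrm{q}}(\mu)$ appears as a composition factor of $\Delta_{\mathrm{q}}(\lambda)$. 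The case $\lambda=\mu$ is excluded because $[\Delta_{\mathrm{q}}(\lambda):L_{\mathrm{q}}(\lambda)]=1$. It thus suffices to show that every composition factor $L_{\mathrm{q}}(\mu)$ of $\Delta_{\mathrm{q}}(\lambda)$ with $\mu\neq\lambda$ satisfies $\mu\in W_\ell\cdot\lambda$.

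For this I would lift to the $A$-form and mimic Jantzen's approach. Let $\cO$ be the localization of $A$ at the maximal ideal generated by a uniformizer $\pi$ vanishing under $v\mapsto{\mathrm{q}}$, and set $\Delta_\cO(\lambda)=\Delta_A(\lambda)\otimes_A\cO$. The canonical contravariant Shapovalov form $(-,-)_\lambda$ on $\Delta_\cO(\lambda)$ is $\cO$-valued, and the submodules
\[
\Delta_\cO(\lambda)^i = \bigl\{x\in \Delta_\cO(\lambda)\,\bigm|\,(x,y)_\lambda\in \pi^i\cO \text{ for all } y\in\Delta_\cO(\lambda)\bigr\}
\]
reduce modulo $\pi$ to a decreasing $U_{\mathrm{q}}$-filtration $\Delta_{\mathrm{q}}(\lambda)=\Delta^0\supset\Delta^1\supset\cdots$ with $\Delta^0/\Delta^1\cong L_{\mathrm{q}}(\lambda)$. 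The key input is the Jantzen sum formula
\[
\sum_{i\geq 1}\mathrm{ch}(\Delta^i) \;=\; \sum_{\alpha\in\Phi^+}\sum_{m\geq 1} \nu_\pi\bigl([m(\lambda+\rho,\alpha^\vee)]_{v^{d_\alpha}}\bigr)\,\chi\bigl(s_{\alpha,m}\cdot\lambda\bigr),
\]
obtained by computing $\pi$-adic valuations of the Shapovalov determinant weight-space by weight-space in terms of quantum integers. Under the specialization $v\mapsto{\mathrm{q}}$ with ${\mathrm{q}}^2$ a primitive $\ell$-th root of unity, the valuation $\nu_\pi([k]_{v^{d_\alpha}})$ is positive precisely when $\ell_\alpha\mid k$, which is precisely the condition that $s_{\alpha,m}\cdot\lambda\in W_\ell\cdot\lambda$. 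Expressing each $\chi(s_{\alpha,m}\cdot\lambda)$ as an integer combination of characters $\mathrm{ch}(\Delta_{\mathrm{q}}(\nu))$ with $\nu<\lambda$ via Weyl's character formula, a downward induction on the dominance order shows that every $L_{\mathrm{q}}(\mu)$ occurring in some $\Delta^i$ with $i\geq 1$ lies in $W_\ell\cdot\lambda$.

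The main obstacle is the sum formula itself: one must lift the generic quantum Shapovalov determinant computation to the integral $\cO$-form and verify that the only source of $\pi$-adic valuation comes from the factors $[k]_{v^{d_\alpha}}$ with $\ell_\alpha\mid k$. The linkage group $W_\ell$ is engineered precisely to match this divisibility, so once the determinant formula is in place, the strong linkage principle—and hence the statement of the theorem—reduces to character-theoretic bookkeeping.
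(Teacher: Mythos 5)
This theorem is not proved in the paper at all: it is quoted verbatim from Andersen's article \emph{The strong linkage principle for quantum groups at roots of 1}, and the paper only uses it as a black box to conclude that Weyl modules in the same block have linked highest weights. So there is no internal proof to compare against; the relevant comparison is with Andersen's argument, which is cohomological in nature (quantized induction from the Borel, analysis of the derived functors $H^i(\lambda)$, reduction to rank one and the quantum Frobenius/Steinberg mechanism) and in fact yields the \emph{strong} linkage principle, valid for all $\ell$ including the even and non-simply-laced cases that this paper needs. Your plan takes the other classical route, via the Jantzen filtration and sum formula. Your first reduction is complete and correct: contravariant duality lets you assume $\lambda \not\leq \mu$, the weight $\lambda$ is then maximal in the extension $M$, the induced map $\Delta_{\mathrm{q}}(\lambda)\to M$ must be surjective by non-splitness, and $[\Delta_{\mathrm{q}}(\lambda):L_{\mathrm{q}}(\lambda)]=1$ rules out $\lambda=\mu$. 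The descent through the sum formula to the orbit $W_\ell\cdot\lambda$ is also the right mechanism: the terms with positive $\pi$-adic valuation are exactly those with $\ell_\alpha\mid m$, the weights $\lambda-m\alpha$ with $\ell_\alpha\mid m$ do lie in $W_\ell\cdot\lambda$ because products of parallel affine reflections give translations by $\ell_\alpha\alpha$, and the finite dot action (needed to rewrite $\chi(\nu)$ as $\pm\mathrm{ch}\,\Delta_{\mathrm{q}}(\nu^+)$) is contained in $W_\ell$, so the downward induction closes.

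Two caveats. First, your displayed sum formula is garbled: the quantum integer should be $[m]_{v^{d_\alpha}}$ (or an equivalent normalization) with the inner sum over $0<m<(\lambda+\rho,\alpha^\vee)$ and the character $\chi(\lambda-m\alpha)$, not $[m(\lambda+\rho,\alpha^\vee)]_{v^{d_\alpha}}$ paired with $\chi(s_{\alpha,m}\cdot\lambda)$; as written the divisibility analysis would come out wrong. Second, and more substantively, the quantum Jantzen sum formula over the local ring $\cO$ at an arbitrary root of unity --- in particular for $\ell$ even and in types $B$ and $C$, which is exactly the regime this paper works in --- is not an off-the-shelf elementary computation: establishing it in that generality is comparable in depth to the linkage principle itself, which is precisely why Andersen's later cohomological proof was needed to remove the restrictions of the earlier Andersen--Polo--Wen treatment. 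You correctly flag the sum formula as the main obstacle, but the plan should be understood as reducing the theorem to an input of essentially equal difficulty rather than to ``character-theoretic bookkeeping.''
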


Since $\Delta_{\mathrm{q}}(\lambda)$ for $\lambda\in X^+$ is indecomposable, we thus get that if $\Delta_{\mathrm{q}}(\lambda)$ and $\Delta_{\mathrm{q}}(\mu)$ are in the same block, then $\lambda$ and $\mu$ are linked.

As mentioned, the $L_{\mathrm{q}}(\lambda)$ for $\lambda \in X^+$ form a complete set of irreducible modules in $\Uqmod$. We fix the following identification from now on to view classes of Weyl modules as basis elements of the corresponding combinatorial Fock space.

\begin{lemma}
There is a $\bQ(v)$-vector space isomorphism between $\cF(X_N)$ and $[\Uqmod]\otimes_\bZ \bQ(v)$, mapping the basis vector $\underline{\lambda}=\lambda+\rho$ to $[\Delta_{\mathrm{q}}(\lambda)]$.
\end{lemma}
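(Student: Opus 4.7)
The plan is to read off the isomorphism directly from the explicit bases of the two spaces on either side. By definition, $\cF(X_N)$ is the $\bQ(v)$-vector space freely generated by the symbols $\{\underline{\lambda}=\lambda+\rho \mid \lambda\in X^+\}$, one for each dominant weight. Thus to produce a $\bQ(v)$-linear isomorphism it suffices to exhibit a $\bQ(v)$-basis of $[\Uqmod]\otimes_{\bZ}\bQ(v)$ indexed in the same way by $X^+$, and to check that $\{[\Delta_{\mathrm{q}}(\lambda)]\}_{\lambda\in X^+}$ is such a basis; the assignment $\underline{\lambda}\mapsto[\Delta_{\mathrm{q}}(\lambda)]$ then extends uniquely to a linear map, which is an isomorphism because it is a bijection on bases.

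First I would record that the simple objects of $\Uqmod$ are exactly the $L_{\mathrm{q}}(\lambda)$ for $\lambda\in X^+$, as stated just before Remark \ref{rem:weylarebasis}, so that $\{[L_{\mathrm{q}}(\lambda)]\}_{\lambda\in X^+}$ is a $\bZ$-basis of $[\Uqmod]$, and after extension of scalars a $\bQ(v)$-basis of $[\Uqmod]\otimes_{\bZ}\bQ(v)$. Next I would invoke Remark \ref{rem:weylarebasis}: since $\Delta_{\mathrm{q}}(\lambda)$ has simple head $L_{\mathrm{q}}(\lambda)$ and all other composition factors are of the form $L_{\mathrm{q}}(\mu)$ with $\lambda-\mu$ a nonzero sum of positive roots, the transition matrix from $\{[\Delta_{\mathrm{q}}(\lambda)]\}$ to $\{[L_{\mathrm{q}}(\lambda)]\}$ is unitriangular with respect to the dominance order on $X^+$. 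Such a matrix is invertible over $\bZ$, hence also over $\bQ(v)$, so $\{[\Delta_{\mathrm{q}}(\lambda)]\}_{\lambda\in X^+}$ is likewise a $\bQ(v)$-basis of $[\Uqmod]\otimes_{\bZ}\bQ(v)$.

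Finally, I would define the map $\Phi\colon\cF(X_N)\to[\Uqmod]\otimes_{\bZ}\bQ(v)$ by $\bQ(v)$-linear extension of $\underline{\lambda}\mapsto[\Delta_{\mathrm{q}}(\lambda)]$, observe that it sends the distinguished basis of the source bijectively onto the basis established above, and conclude that $\Phi$ is a $\bQ(v)$-vector space isomorphism. There is no real obstacle in the argument; the only substantive input is the unitriangularity from Remark \ref{rem:weylarebasis}, and the rest is formal. Conceptually, the interesting content of this identification is not in the proof but in the fact that it allows the additional structures on each side (the KL-module structure on $\cF(X_N)$ and the tensor-product / block structure on $[\Uqmod]$) to be compared; that comparison is carried out in the later sections and is not part of the present lemma.
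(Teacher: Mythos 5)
Your argument is correct and is essentially the paper's own justification: the paper states this lemma without a separate proof, relying precisely on Remark \ref{rem:weylarebasis} (unitriangularity of Weyl-module classes against simple classes) to know that $\{[\Delta_{\mathrm{q}}(\lambda)]\}_{\lambda\in X^+}$ is a basis, after which the map is just a bijection of bases extended linearly. Nothing is missing.
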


Note that the interpretation of $\cF(X_N)$ as a KL-module in \cite{LRS} allows to identify the KL-basis of $\cF(X_N)$ with the basis given by the classes of irreducible modules in $[\Uqmod]\otimes_\bZ \bQ(v)$.

\section{Fock spaces, affine Weyl groups, and quantum symmetric pairs} \label{sec:fockspace}
In this section we introduce the necessary combinatorics of sequences to define the spaces that the $\cF(X_N)$ are embedded into.

Denote by $\bH=\oneh+\bZ$ the half-integers. Since $\bZ$ acts on $\bH$ by addition, we consider the cosets $\bH/r\bZ$ for a positive integer $r$. For $p\in\bH$ we denote by $\overline{p}$ its coset in $\bH/r\bZ$ and for $p\in\bZ$, we denote by $\overline{p}$ its coset in $\bZ/r\bZ$.

Consider the following sets of sequences
\begin{align*}
\mathcal{S}_{\bZ} &= \{ \underline{a}:\bZ \rightarrow \{0,1\} \mid \underline{a}(i) = 0 \text{ for } i \gg 0,\, \underline{a}(i) = 1 \text{ for } i \ll 0\} \text{ and}\\
\mathcal{S}_{\bH} &= \{ \underline{a}:\bH \rightarrow \{0,1\} \mid \underline{a}(i) = 0 \text{ for } i \gg 0,\, \underline{a}(i) = 1 \text{ for } i \ll 0\}.
\end{align*}
We call $\mathcal{S}_{\bZ}$ the sequences supported on integers and $\mathcal{S}_{\bH}$ the sequences supported on half-integers. Then we denote by $\cF=\cF^1$ the $\bQ(v)$-vector space with basis $\mathcal{S}_{\bZ}$ and by $\cF^{\oneh}$ the $\bQ(v)$-vector space with basis $\mathcal{S}_{\bH}$. We refer to $\cF$ and $\cF^{\oneh}$ simply as the \emph{Fock space}. The notation $\cF^1$ is needed for type $B$ to make the differentiation between Fock spaces clear in that case.

\begin{remark}
While they are defined as maps we consider these as $\{0,1\}$-sequences with indices labelled by either $\bZ$ or $\bH$. We refer to the value $\underline{a}(i)$ as the entry at position $i$ and say that the position is ``empty'' if $\underline{a}(i)=0$ and it is occupied if $\underline{a}(i)=1$.
\end{remark}

Since a sequence in this set only has finitely many non-zero entry in its positive half and only finitely many zero entry in its non-positive half, the following is well defined for $\underline{a} \in \mathcal{S}_{\bZ}$ and equally for $\underline{a} \in \mathcal{S}_{\bH}$
\[
\mathrm{ch}(\underline{a}) = \sum_{i> 0} \underline{a}(i) - \sum_{i \leq 0} (1-\underline{a}(i)).
\]
We call this the charge of $\underline{a}$ and the set of all sequences of charge $N$ is denoted by $\mathcal{S}_{\bZ,N}$ respectively $\mathcal{S}_{\bH,N}$. The corresponding subspaces, the Fock space of charge $N$, are spanned by sequences of charge $N$ and denoted by $\cF_N=\cF_N^1$ and $\cF_N^{\oneh}$.

\subsection{Moving operators and counting statistics} \label{sec:operatorsandstats}

We define two basic operators on Fock spaces that are moving a $1$ entry to either the left or right.

\begin{definition} \label{def:movingoperators}
For $\underline{a} \in \mathcal{S}_{\bZ}$ let $i\in\bH$ and for $\underline{a} \in \mathcal{S}_{\bH}$ let $i\in\bZ$.
\begin{itemize}
\item Define a sequence $\underline{b}$ via $\underline{b}(i-\oneh)-\underline{b}(i+\oneh)=1$ and $\underline{b}(j) = \underline{a}(j)$ for $j \neq i \pm \oneh$. Then
\[
\re_i \underline{a} = \begin{cases}
\underline{b} & \text{ if } \underline{a}(i+\oneh) - \underline{a}(i-\oneh) =1\\
0 & \text{ otherwise.}
\end{cases}
\]
\item Define a sequence $\underline{c}$ via $\underline{c}(i+\oneh)-\underline{c}(i-\oneh)=1$ and $\underline{c}(j) = \underline{a}(j)$ for $j \neq i \pm \oneh$. Then
\[
\rf_i \underline{a} = \begin{cases}
\underline{c} & \text{ if } \underline{a}(i-\oneh) - \underline{a}(i+\oneh) =1\\
0 & \text{ otherwise.}
\end{cases}
\]
\end{itemize}
\end{definition}

These define linear operators on both $\cF$ and on $\cF^{\oneh}$, which we call the \emph{moving operators}. By definition $\re_i$ and $\rf_i$ preserve the charge and thus restrict to linear operators on $\cF_N$ and $\cF^{\oneh}_N$. We say that $\re_i$ moves an entry $1$ from position $i+\oneh$ to position $i-\oneh$ or is zero if that is not possible, while $\rf_i$ moves an entry $1$ in the opposite direction. 

For the definition of the action of the quantum affine algebra in type $A$ or the quantum symmetric pair in other types, we need a number of counting statistics, which we introduce now. They appear in different combination for all the actions.

\begin{definition} \label{def:countingstats}
Fix $r\in\bZ$. For $\underline{a} \in \mathcal{S}_{\bZ}$ let $j\in\bH$ and for $\underline{a} \in \mathcal{S}_{\bZ}$ let $j\in\bZ$. Then define
\begin{align*}
R_r^e(j,\underline{a}) = \# \{ k \in j + r\bZ_{>0} \mid \re_k \underline{a} \neq 0\}, & \quad R_r^f(j,\underline{a}) = \# \{ k \in j + r\bZ_{>0} \mid \rf_k \underline{a} \neq 0\}, \\
L_r^e(j,\underline{a}) = \# \{ k \in j - r\bZ_{>0} \mid \re_k \underline{a} \neq 0\}, & \quad L_r^f(j,\underline{a}) = \# \{ k \in j - r\bZ_{>0} \mid \rf_k \underline{a} \neq 0\}.
\end{align*}
To shorten notation we also introduce
\begin{align*}
R_r^{e-f}(j,\underline{a}) = R_r^e(j,\underline{a}) - R_r^f(j,\underline{a}), & \quad  R_r^{f-e}(j,\underline{a}) = R_r^f(j,\underline{a}) - R_r^e(j,\underline{a}),\\
L_r^{e-f}(j,\underline{a}) = L_r^e(j,\underline{a}) - L_r^f(j,\underline{a}), & \quad  L_r^{f-e}(j,\underline{a}) = L_r^f(j,\underline{a}) - L_r^e(j,\underline{a}).
\end{align*}
In addition, for $\underline{a} \in \mathcal{S}_{\bZ}$ let $\overline{i} \in \bH/r\bZ$ and for $\underline{a} \in \mathcal{S}_{\bZ}$ let $\overline{i} \in \bZ/r\bZ$. Then define
\begin{align*}
T_r^e(\overline{i},\underline{a}) = \# \{ j \in \overline{i} \mid \re_j \underline{a} \neq 0\} \text{ and }, & \quad T_r^f(\overline{i},\underline{a}) = \# \{ j \in \overline{i} \mid \rf_j \underline{a} \neq 0\},\\
T_r^{e-f}(\overline{i},\underline{a}) = T_r^e(\overline{i},\underline{a})-T_r^f(\overline{i},\underline{a}), & \quad T_r^{f-e}(\overline{i},\underline{a})=T_r^f(\overline{i}-\underline{a})-T_r^e(\overline{i},\underline{a}).
\end{align*}
\end{definition}

The cumbersome notation is necessary, since the action of the generators in the different cases depend on both the sequence $\underline{a}$ as well as a position $j$ such that an operator $\re_j$ respectively $\rf_j$ can be applied. In most cases the index $r$ is $r=\ell$, but in case of type $B$ and even $\ell$ it has to replaced by $r=\ellh$ for the action, hence the addition of the index $r$ in the definition.

\begin{remark}
Fix a sequence $\underline{a}$ in either $\mathcal{S}_{\bZ}$ or $\mathcal{S}_{\bH}$. Then $R_r^e(j,\underline{a})$ counts how often, for $k \in j + r\bZ_{>0}$ (i.e. to the right of $j$), it happens that $\underline{a}(k-\oneh)=0$ and $\underline{a}(k+\oneh)=1$. Similarly $R_r^f(j,\underline{a})$ counts when $\underline{a}(k-\oneh)=1$ and $\underline{a}(k+\oneh)=0$ instead and $L_r^e(j,\underline{a})$ and $L_r^f(j,\underline{a})$ count these occurrences for $k \in j - r\bZ_{>0}$, i.e. to the left of $j$. While the statistics of the form $T_r^e(\overline{i},\underline{a})$ count where such situation occur on all of $\bZ$ respectively $\bH$. This should give an easy way to remember which of the operators counts what and in which direction of a fixed position.
\end{remark}

\subsection{Type A} \label{sec:typeA}

We briefly recall here the definition of the action on the Fock space in type $A$. Instead of defining the Fock space via partitions as usual, we use sequences.

Fix $X=\oplus_{i=1}^N \bZ \varepsilon_i$ the a lattice of integral weights for $\mathfrak{gl}_N(\bC)$. One could also use $\mathfrak{sl}_N(\bC)$ here, but the general linear case is more convenient from the view point of combinatorics. In $X$ we have
\begin{align*}
X^+ &= \{(\lambda_1,\ldots,\lambda_N) \in X \mid \lambda_1 \geq \lambda_2 \geq \ldots \geq \lambda_N\} \text{ and}\\
P^+ &= \{\lambda_1,\ldots,\lambda_N) \in X^+ \mid \lambda_N \geq 0\},
\end{align*}
the sets of dominant weights and polynomial weights of $\mathfrak{gl}_N(\bC)$. One should consider $P^+$ as the analogue of the dominant weights for $\mathfrak{sl}_N(\bC)$. Furthermore, fix the element $\rho = (0,-1,\ldots,-(N-1))$. In contrast to $\mathfrak{sl}_N(\bC)$ we have a choice here and we fix this particular $\rho$ to match up nicely with the usual definition of partitions and adding boxes of certain residues.

To $\lambda \in P^+$ we associate the sequence $\underline{a}_{\underline{\lambda}} \in \cS_{\bZ}$ given by 
\[
\underline{a}_{\underline{\lambda}}(i) = \begin{cases}
1 & \text{if } i \leq -N, \\
1 & \text{if there exists } 1 \leq j \leq N \text{ s.t. } i=\underline{\lambda}_j, \\
0 & \text{otherwise.}
\end{cases}
\]
i.e. we put a $1$ at all positions that appear as entries in $\underline{\lambda}$, since we added $\underline{\lambda}=\lambda+\rho$, these are all distinct, so this is well-defined. In addition we set all values to $1$ for $i \leq -N$. Note that for the zero weight $\underline{0} = (0,-1,\ldots,-(N-1))$, hence the sequence associated to the zero weight has value $1$ for all non-positive integers and value $0$ for all positive values.

Note that this embeds $\cF(A_N)$ into the Fock space $\cF_0$ of charge zero.

\begin{remark} \label{rem:sequencestopartitions}
To pass to the partition description of $\cF_0$, see for example \cite{Ariki} or \cite{RT}, fix a sequence $\underline{a}$ of charge $0$. Let $(\lambda_1,\lambda_2,\ldots)$ be defined such that $\underline{a}(\lambda_1)$ is the right most entry equal to $1$, $\underline{a}(\lambda_2-1)$ is the second right most entry equal to $1$ and so forth with $\underline{a}(\lambda_i - (i-1))$ being the $i$-th right most entry equal to $1$. Then by construction $(\lambda_1,\lambda_2,\ldots)$ is a weakly decreasing sequence of non-negative integers that eventually stabilizes to $0$ after finitely many steps, hence a partition.
\end{remark}

For $\overline{p} \in \bH/\ell\bZ$ and $\underline{a}\in\mathcal{S}_{\bZ}$, define linear operators on $\cF$ via
\begin{align*}
\OPE_{\overline{p}}\, \underline{a} = \sum_{j \in \overline{p}} v^{R_\ell^{e-f}(j,\underline{a})} \re_j\,\underline{a}, \quad \OPF_{\overline{p}}\, \underline{a} = \sum_{j \in \overline{p}} v^{L_\ell^{f-e}(j,\underline{a})} \rf_j\,\underline{a}, \text{ and } \OPK_{\overline{p}}\, \underline{a} &= v^{T_\ell^{f-e}(\overline{p},\underline{a})} \underline{a}.
\end{align*}
Restricting these operators to the subspace $\cF_0$ gives then the well-known action on $\cF_0$, see for example \cite{Ariki}, by using Remark \ref{rem:sequencestopartitions} to translate it to sequences instead of partitions. 

\begin{theorem}{\cite[Theorem 10.6]{Ariki}}
The linear operators $\{\OPE_{\overline{p}},\OPF_{\overline{p}},\OPK_{\overline{p}}^{\pm 1} \}$ define an action of the quantum affine algebra $U_v(\hat{\mathfrak{sl}}_\ell)^\prime $ on $\cF_0$.
\end{theorem}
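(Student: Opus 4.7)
The theorem as stated is a direct translation of a classical result, so the cleanest plan is to \emph{reduce to Ariki's original formulation} via the partition dictionary in Remark \ref{rem:sequencestopartitions}, rather than re-verifying every quantum affine algebra relation from scratch. The first step is to confirm that the bijection between charge-$0$ sequences $\underline{a}\in\cS_{\bZ,0}$ and partitions $\lambda$ sending $\underline{a}(\lambda_i-(i-1))$ to the $i$-th $1$ identifies the moving operators $\re_j$ and $\rf_j$ with removing, respectively adding, a box in the Young diagram of $\lambda$ at the position determined by $j\in\bH$. Under this identification, the residue $\overline{p}\in\bH/\ell\bZ$ corresponds to the $\ell$-residue of boxes in the usual sense (after a shift by $\oneh$ and a choice of orientation).

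Once this dictionary is in place, the second step is to match the counting statistics $R_\ell^{e-f}(j,\underline{a})$, $L_\ell^{f-e}(j,\underline{a})$ and $T_\ell^{f-e}(\overline{p},\underline{a})$ with Ariki's $N$-functions counting addable and removable $\overline{p}$-nodes strictly above and strictly below a given box, and all such nodes in the partition, respectively. Here one simply unpacks Definition \ref{def:countingstats}: a position $k\in\overline{p}$ with $\re_k\underline{a}\neq 0$ is precisely a removable box of residue $\overline{p}$, and symmetrically for $\rf_k$. Once these identifications are confirmed, the operators $\OPE_{\overline{p}},\OPF_{\overline{p}},\OPK_{\overline{p}}$ defined here are literally Ariki's operators, and the result follows from \cite[Theorem 10.6]{Ariki}.

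If one wanted to give a self-contained verification instead, the plan would be to check, for each pair $\overline{p},\overline{q}\in\bH/\ell\bZ$: (i) that the $\OPK_{\overline{p}}$ are diagonal and mutually commute, which is immediate from the definition; (ii) the commutations $\OPK_{\overline{p}}\OPE_{\overline{q}}\OPK_{\overline{p}}^{-1}=v^{a_{\overline{p}\overline{q}}}\OPE_{\overline{q}}$ and the analogue for $\OPF_{\overline{q}}$, which follow by comparing how $T_\ell^{f-e}(\overline{p},-)$ changes under a single application of $\re_j$ (it shifts by $-2$ when $j\in\overline{p}$ and by $+1$ when $j\in\overline{p}\pm\overline{\oneh}$, matching the affine Cartan matrix entries $a_{\overline{p}\overline{q}}$); (iii) the commutator relation $[\OPE_{\overline{p}},\OPF_{\overline{q}}]=\delta_{\overline{p}\overline{q}}(\OPK_{\overline{p}}-\OPK_{\overline{p}}^{-1})/(v-v^{-1})$, which for $\overline{p}\neq\overline{q}$ reduces to the obvious commutation of $\re_j$ and $\rf_k$ at non-overlapping positions, and for $\overline{p}=\overline{q}$ requires a telescoping argument on the geometric series of powers of $v$ arising from the two-column subsequences of $\underline{a}$ at positions in $\overline{p}$; (iv) the quantum Serre relations, which are the combinatorially most delicate part.

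The step that would constitute the main obstacle is the commutator relation (iii) in the case $\overline{p}=\overline{q}$, together with the Serre relations. Both rely on the same mechanism: group the positions in a fixed residue class $\overline{p}$ into alternating runs of $0$'s and $1$'s read from left to right, observe that $\re_j$ and $\rf_j$ for $j\in\overline{p}$ act on each such run independently as raising/lowering operators of an $\mathfrak{sl}_2$-string, and then invoke the standard $\mathfrak{sl}_2$-identities together with a careful tracking of the $v$-weights coming from $R_\ell^{e-f}$ and $L_\ell^{f-e}$. Given that this is already worked out in the partition model in \cite{Ariki}, the pragmatic choice here is to cite that reference after verifying the dictionary above.
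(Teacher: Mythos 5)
Your proposal is correct and matches the paper's treatment: the paper does not reprove this statement but cites \cite[Theorem 10.6]{Ariki} directly, remarking only that Remark \ref{rem:sequencestopartitions} supplies the dictionary between charge-zero sequences and partitions under which the operators $\OPE_{\overline{p}},\OPF_{\overline{p}},\OPK_{\overline{p}}$ become Ariki's operators. Your identification of $\re_j$, $\rf_j$ with removable and addable boxes and of the statistics $R_\ell^{e-f}$, $L_\ell^{f-e}$, $T_\ell^{f-e}$ with Ariki's node-counting functions is exactly the verification the paper leaves implicit.
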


Note that there is a natural isomorphism between $\Psi_m:\cF_0\rightarrow \cF_m$ for any integer $m$, by just shifting the sequence by $m$ steps, i.e. $\Psi_m(\underline{a})(i)=\underline{a}(i-m)$. By construction
\[
\Psi_m(\OPE_{\overline{p}}\underline{a}) = \OPE_{\overline{p+m}}\Psi_m(\underline{a}),\, \Psi_m(\OPF_{\overline{p}}\underline{a}) = \OPF_{\overline{p+m}}\Psi_m(\underline{a}), \text{ and } \Psi_m(\OPK_{\overline{p}}\underline{a}) = \OPK_{\overline{p+m}}\Psi_m(\underline{a}).
\]
Since all relations of the quantum affine algebra are rotation invariant, this defines an action of $U_v(\hat{\mathfrak{sl}}_\ell)^\prime$ on all $\cF_m$, $m\in\bZ$, given by the same formulas. The only difference really being for which index $\overline{p}$, $\OPF_{\overline{p}}$ does not act as zero on the ``highest weight sequence'' where all $1$'s are as far to the left as possible, i.e. the sequence on which all $\OPE_{\overline{p}^\prime}$ act as zero.

\subsection{Affine Weyl group combinatorics}

To introduce alcove geometry for the affine Weyl group we define $X_\bR=X \otimes_\bZ \bR$. Then for $\alpha \in \Phi^+$ we consider the affine hyperplane
\[
H_{\alpha,k}=\{x \in X_\bR \mid (\lambda + \rho,\alpha^\vee)=k\ell_\alpha\}.
\]
The affine reflection at this affine hyperplane is precisely $s_{\alpha,k}$ and the reflections at all such hyperplanes give the action of $W_\ell$. We denote the set of all such affine hyperplanes by $\cH$ and conversely for a hyperplane $H \in \cH$ we denote by $s_H$ the corresponding affine reflection. The statements and definitions in this section can be found in most textbooks on affine reflection groups, e.g. \cite{HumphreysCoxeter}.

\begin{definition}
Consider the complement of all affine hyperplanes in $\cH$
\[
X_\bR^{\mathrm{reg}} = X_\bR \setminus \bigcup_{H \in \cH} H.
\]
A connected component of $X_\bR^{\mathrm{reg}}$ is called an \emph{open alcove}. The closure of a connected component in $X_\bR$ is called a \emph{(closed) alcove}. We denote the set of all (closed) alcoves by $\cA$.
\end{definition}

Note that $W_\ell$ acts simply transitively on the set $\cA$. Points in an open alcove have trivial stabilizer, while points in the boundary of an open alcove have non-trivial stabilizers.

Each affine hyperplane $H\in \cH$ defines two closed halfspaces. For a fixed alcove $A \in \cA$ we denote by $H_A^+$ the half space that contains $A$. Furthermore for two alcoves  $A$ and $A^\prime$ we call a hyperplane $H\in\cH$ \emph{between} $A$ and $A^\prime$ if $H_A^+ \neq H_{A^\prime}^+$. This give rise to the definition of a distance function 
\[
d(A,A^\prime) = \# \{H \in \cH \mid H \text{ between } A \text{ and } A^\prime\},
\]
for $A,A^\prime \in \cA$.

\begin{lemma} \label{lem:distancereduce}
Let $H \in \cH$ be a hyperplane between alcoves $A, A^\prime \in \cA$. Then $d(A,A^\prime) > d(s_HA,A^\prime)$ and $d(A,A^\prime) > d(A,s_HA^\prime)$.
\end{lemma}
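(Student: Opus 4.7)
The plan is to use the standard minimal gallery argument from the theory of Coxeter complexes, which is the natural framework for the action of $W_\ell$ on $X_\bR$.

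First I would invoke the classical fact (see \cite{HumphreysCoxeter}) that the combinatorial distance $d(A,A^\prime)$ — defined here as the number of separating hyperplanes — coincides with the minimum length of a gallery from $A$ to $A^\prime$, i.e.\ a sequence of pairwise adjacent alcoves, and that in any such minimal gallery each hyperplane between $A$ and $A^\prime$ appears exactly once as the common wall of a consecutive pair. This is the only nontrivial input; once it is granted, the rest of the proof is a direct splicing construction.

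Given that, I would fix a minimal gallery $A = A_0, A_1, \ldots, A_n = A^\prime$ with $n = d(A,A^\prime)$, and let $k$ be the unique index at which $H$ is the wall between $A_{k-1}$ and $A_k$, so $A_k = s_H A_{k-1}$. Applying $s_H$ to the initial segment and gluing in the untouched tail produces the sequence
\[
s_H A \,=\, s_H A_0,\, s_H A_1,\, \ldots,\, s_H A_{k-1},\, A_k,\, A_{k+1},\, \ldots,\, A_n \,=\, A^\prime.
\]
By the choice of $k$ the two middle entries $s_H A_{k-1}$ and $A_k$ coincide and collapse, and adjacency of the reflected pairs is automatic since $s_H$ permutes $\cH$ and hence sends adjacent alcoves to adjacent alcoves. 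The result is a genuine gallery from $s_H A$ to $A^\prime$ of length $n-1$, witnessing $d(s_H A, A^\prime) \leq n - 1 < d(A, A^\prime)$. The boundary cases $k=1$ and $k=n$ just amount to removing the first or last step.

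The inequality $d(A, s_H A^\prime) < d(A, A^\prime)$ follows by the symmetric construction applied to the tail of a minimal gallery, or equivalently by swapping $A$ and $A^\prime$ and using $d(A,A^\prime) = d(A^\prime,A)$ (immediate from the definition). The main — and really only — obstacle, if one wishes to be fully self-contained, is the gallery/distance equivalence; with that in hand, the argument is a short reflect-and-collapse construction with no further difficulty.
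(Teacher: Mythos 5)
Your argument is correct. The paper does not prove this lemma at all --- it is listed among standard facts on affine reflection groups with a pointer to \cite{HumphreysCoxeter} --- and your reflect-and-collapse gallery construction is precisely the textbook argument behind that citation: once one grants that $d(A,A^\prime)$ equals the minimal gallery length and that every separating hyperplane is crossed in such a gallery, applying $s_H$ to the initial segment up to the crossing of $H$ and splicing yields a gallery from $s_HA$ to $A^\prime$ of length $d(A,A^\prime)-1$, and the second inequality follows by the symmetry $d(A,A^\prime)=d(A^\prime,A)$. Nothing is missing.
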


For an alcove $A\in\cA$ consider the set of hyperplanes $\cH_A$ that intersect $A$ in maximal dimensions, these are called the walls of $A$. Then $S_A=\{s_H \mid H \in \cH_A\}$ is a generating set of $W_\ell$ as a Coxeter group. With respect to these generators we can see the distance as a choice free substitute for the length function $l$ with respect to $S_A$, namely let $A^\prime$ be another alcove then
\[
d(A,A^\prime)=l(w) \text{ for } wA=A^\prime.
\]

\begin{remark}
With the equality of distance and length, it follows that for a fixed alcove $A$ and $H\in\cH_A$, the hyperplane $H$ is the only hyperplane between $A$ and $s_HA$.
\end{remark}

This leads to the combinatorics of (minimal) galleries.

\begin{definition}
Two alcoves $A,A^\prime\in\cA$ are called adjacent if $A^\prime=s_HA$ for some $H\in\cH_A$. A sequence of alcove $\Gamma=(A_0,A_1,A_2,\ldots,A_r)$ such that $A_i$ is adjacent to $A_{i+1}$ is called an \emph{(alcove) gallery} from $A_0$ to $A_r$. A gallery $\Gamma=(A_0,A_1,A_2,\ldots,A_r)$ such that $r=d(A_0,A_r)$ is called a \emph{minimal gallery}. Note that the set of walls $\{H_i \mid H_i \text{ between } A_{i-1} \text{ and } A_i\}$ is precisely the set of hyperplanes between $A_0$ and $A_r$ in case of a minimal gallery $\Gamma$. We call these hyperplanes the hyperplanes that are crossed by the gallery $\Gamma$.
\end{definition}

Since weights can be contained in a hyperplane, we need a more restrictive notion of hyperplanes between two alcoves.

\begin{definition} \label{def:strictlybetween}
For $\lambda\in X$ denote by $A_\lambda$ an alcove that contains $\lambda$. For $\lambda,\mu \in X$ and choices of alcoves $A_\lambda$ and $A_\mu$, we call a hyperplane $H\in\cH$ \emph{strictly between} $A_\lambda$ and $A_\mu$ if $H$ is between $A_\lambda$ and $A_\mu$ and $H \cap \{\lambda,\mu\} = \varnothing$.
\end{definition}

Note that in Definition \ref{def:strictlybetween}, the choice of an alcove $A_\lambda$ is unique if and only if $\lambda$ is in the interior of an alcove. Otherwise multiple choices are possible. We make frequent use of the following lemma to make a particularly good choice.

\begin{lemma} \label{lem:goodminimalgallery}
Let $\lambda,\mu\in X$ such that $\lambda \in W_\ell \mu$. Then there are choices of alcoves $A_\lambda$ and $A_\mu$ such that a minimal gallery $\Gamma$ from $A_\lambda$ to $A_\mu$ only crosses hyperplanes that are strictly between $A_\lambda$ and $A_\mu$
\end{lemma}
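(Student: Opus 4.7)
The plan is to select $A_\lambda$ and $A_\mu$ as a minimizing pair for the distance function, and then use Lemma \ref{lem:distancereduce} to derive a contradiction if any hyperplane through $\lambda$ or $\mu$ were between them.

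First I would note that, since the affine hyperplane arrangement $\cH$ is locally finite in $X_\bR$, only finitely many alcoves contain $\lambda$ (and likewise for $\mu$). Therefore the set of pairs $(A,A')\in \cA\times\cA$ with $\lambda\in A$ and $\mu\in A'$ is a non-empty finite set, so we may choose a pair $(A_\lambda,A_\mu)$ which minimizes $d(A_\lambda,A_\mu)$ among all such pairs.

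Next I would argue that this pair has the required property. Fix a minimal gallery $\Gamma$ from $A_\lambda$ to $A_\mu$; by definition its crossed hyperplanes are exactly the hyperplanes between $A_\lambda$ and $A_\mu$, so it suffices to show that no such hyperplane contains $\lambda$ or $\mu$. Suppose for contradiction that there is a hyperplane $H\in\cH$ between $A_\lambda$ and $A_\mu$ with $\lambda\in H$ (the case $\mu\in H$ is symmetric). Since the reflection $s_H$ fixes $H$ pointwise, it fixes $\lambda$, and hence $s_H A_\lambda$ is an alcove still containing $\lambda$. By Lemma \ref{lem:distancereduce}, applied to $A=A_\lambda$ and $A'=A_\mu$, we obtain $d(s_H A_\lambda, A_\mu) < d(A_\lambda,A_\mu)$, contradicting the minimality of our chosen pair. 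The symmetric argument with $s_H$ applied to $A_\mu$ rules out the case $\mu \in H$.

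Therefore every hyperplane between $A_\lambda$ and $A_\mu$ avoids both $\lambda$ and $\mu$, i.e. is strictly between them in the sense of Definition \ref{def:strictlybetween}, and hence the minimal gallery $\Gamma$ only crosses such hyperplanes. The main (and essentially only) step is the contradiction argument in the previous paragraph; no use of the hypothesis $\lambda\in W_\ell\mu$ is required, though in the application this hypothesis guarantees that such alcoves are linked by the Weyl group action.
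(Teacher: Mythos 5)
Your proof is correct and is essentially the same argument as the paper's: both hinge on the observation that if a hyperplane between the chosen alcoves contains $\lambda$ (or $\mu$), reflecting the corresponding alcove yields another valid choice at strictly smaller distance by Lemma \ref{lem:distancereduce}. The only difference is presentational — you pick a distance-minimizing pair outright, while the paper runs the descent iteratively and terminates by boundedness below — and your side remark that the hypothesis $\lambda\in W_\ell\mu$ is not actually needed is accurate.
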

\begin{proof}
Let $A_\lambda$ and $A_\mu$ be any choice for alcoves with $\lambda \in A_\lambda$ and $\mu \in A_\mu$. Let $\Gamma$ be a minimal gallery and assume that there exists a hyperplane $H$ that is between $A_\lambda$ and $A_\mu$ but not strictly between. Without loss of generality assume $\lambda \in H$ (otherwise just rename), hence $\lambda \in s_HA_\lambda$ and so $s_HA_\lambda$ is also a choice for an alcove containing $\lambda$. By Lemma \ref{lem:distancereduce} it holds $d(A_\lambda,A_\mu) > d(s_HA_\lambda,A_\mu)$. Thus we can replace $A_\lambda$ by $s_HA_\lambda$ and the distance between $A_\mu$ and the new $A_\lambda$, i.e. the length of a minimal gallery, strictly decreases. Since the distance is bounded below by $0$ this process has to end after finitely many steps and at that point all hyperplanes crossed by a minimal gallery are strictly between the final choices of $A_\lambda$ and $A_\mu$.  
\end{proof}

\subsection{Affine quantum symmetric pair} \label{sec:quantumsymmpair}

For this section fix $r > 3$. To define the necessary quantum symmetric pairs we consider sets of the form $\bZ/r\bZ$ respectively $\bH/r\bZ$. The restriction of $r>3$ is so to not consider special cases for small $r$. In general one can define similar algebras also for $r=2$ and $r=3$. The relations change slightly in those cases.

We consider the Dynkin diagram of type $\hat{A}_{r-1}$ with indices either labeled by entries in $\bZ/r\bZ$ or $\bH/r\bZ$
\[
\begin{array}{cc}
\begin{tikzpicture}[anchorbase, scale=.7, tinynodes]
	\draw[dotted, gray] (-1,0) to (6,0);
	\node[circle,fill,inner sep=.5pt,label = {left:$\overline{0}$}] (1) at (0,0){};
	\draw (0,0) to (1,1);
	\draw (0,0) to (1,-1);
	\node[circle,fill,inner sep=.5pt,label = {above:$\overline{1}$}] (2) at (1,1){};
	\node[circle,fill,inner sep=.5pt,label = {below:$\overline{r-1}$}] (3) at (1,-1){};
	\draw[dashed] (1,-1) to (4,-1);
	\draw[dashed] (1,1) to (4,1);
	\node[circle,fill,inner sep=.5pt,label = {above:$\overline{\nicefrac{(r-2)}{2}}$}] (2) at (4,1){};
	\node[circle,fill,inner sep=.5pt,label = {below:$-\overline{\nicefrac{(r+2)}{2}}$}] (3) at (4,-1){};
	\draw (4,1) to (5,0);
	\draw (4,-1) to (5,0);
	\node[circle,fill,inner sep=.5pt,label = {right:$\overline{\nicefrac{r}{2}}$}] (3) at (5,0){};
	\node at (2.5,-2) {$\bZ/r\bZ$ for $r$ even};
\end{tikzpicture}
&
\begin{tikzpicture}[anchorbase, scale=.7, tinynodes]
	\draw[dotted, gray] (-1,0) to (6,0);
	\node[circle,fill,inner sep=.5pt,label = {left:$\overline{0}$}] (1) at (0,0){};
	\draw (0,0) to (1,1);
	\draw (0,0) to (1,-1);
	\node[circle,fill,inner sep=.5pt,label = {above:$\overline{1}$}] (2) at (1,1){};
	\node[circle,fill,inner sep=.5pt,label = {below:$\overline{r-1}$}] (3) at (1,-1){};
	\draw[dashed] (1,-1) to (4,-1);
	\draw[dashed] (1,1) to (4,1);
	\node[circle,fill,inner sep=.5pt,label = {above:$\overline{\nicefrac{(r-3)}{2}}$}] (2) at (4,1){};
	\node[circle,fill,inner sep=.5pt,label = {below:$\overline{\nicefrac{(r+3)}{2}}$}] (3) at (4,-1){};
	\draw (4,1) to (5,.5);
	\draw (4,-1) to (5,-.5);
	\node[circle,fill,inner sep=.5pt,label = {right:$\overline{\nicefrac{(r-1)}{2}}$}] (3) at (5,.5){};
	\node[circle,fill,inner sep=.5pt,label = {right:$\overline{\nicefrac{(r+1)}{2}}$}] (3) at (5,-.5){};
	\draw (5,.5) to (5,-.5);
	\node at (2.5,-2) {$\bZ/r\bZ$ for $r$ odd};
\end{tikzpicture}
\\
&
\\
\begin{tikzpicture}[anchorbase, scale=.7, tinynodes]
	\draw[dotted, gray] (-1,0) to (6,0);
	\node[circle,fill,inner sep=.5pt,label = {left:$\overline{\oneh}$}] (1) at (0,.5){};
	\node[circle,fill,inner sep=.5pt,label = {left:$\overline{r-\oneh}$}] (1) at (0,-.5){};
	\draw (0,.5) to (0,-.5);
	\draw (0,.5) to (1,1);
	\draw (0,-.5) to (1,-1);
	\node[circle,fill,inner sep=.5pt,label = {above:$\overline{\nicefrac{3}{2}}$}] (2) at (1,1){};
	\node[circle,fill,inner sep=.5pt,label = {below:$\overline{r-\nicefrac{3}{2}}$}] (3) at (1,-1){};
	\draw[dashed] (1,-1) to (4,-1);
	\draw[dashed] (1,1) to (4,1);
	\node[circle,fill,inner sep=.5pt,label = {above:$\overline{\nicefrac{(r-3)}{2}}$}] (2) at (4,1){};
	\node[circle,fill,inner sep=.5pt,label = {below:$\overline{\nicefrac{(r+3)}{2}}$}] (3) at (4,-1){};
	\draw (4,1) to (5,.5);
	\draw (4,-1) to (5,-.5);
	\node[circle,fill,inner sep=.5pt,label = {right:$\overline{\nicefrac{(r-1)}{2}}$}] (3) at (5,.5){};
	\node[circle,fill,inner sep=.5pt,label = {right:$\overline{\nicefrac{(r+1)}{2}}$}] (3) at (5,-.5){};
	\draw (5,.5) to (5,-.5);
	\node at (2.5,-2) {$\bH/r\bZ$ for $r$ even};
\end{tikzpicture}
&
\begin{tikzpicture}[anchorbase, scale=.7, tinynodes]
	\draw[dotted, gray] (-1,0) to (6,0);
	\node[circle,fill,inner sep=.5pt,label = {left:$\overline{\oneh}$}] (1) at (0,.5){};
	\node[circle,fill,inner sep=.5pt,label = {left:$\overline{r-\oneh}$}] (1) at (0,-.5){};
	\draw (0,.5) to (0,-.5);
	\draw (0,.5) to (1,1);
	\draw (0,-.5) to (1,-1);
	\node[circle,fill,inner sep=.5pt,label = {above:$\overline{\nicefrac{3}{2}}$}] (2) at (1,1){};
	\node[circle,fill,inner sep=.5pt,label = {below:$\overline{r-\nicefrac{3}{2}}$}] (3) at (1,-1){};
	\draw[dashed] (1,-1) to (4,-1);
	\draw[dashed] (1,1) to (4,1);
	\node[circle,fill,inner sep=.5pt,label = {above:$\overline{\nicefrac{(r-2)}{2}}$}] (2) at (4,1){};
	\node[circle,fill,inner sep=.5pt,label = {below:$\overline{\nicefrac{(r+2)}{2}}$}] (3) at (4,-1){};
	\draw (4,1) to (5,0);
	\draw (4,-1) to (5,0);
	\node[circle,fill,inner sep=.5pt,label = {right:$\overline{\nicefrac{r}{2}}$}] (3) at (5,0){};
	\node at (2.5,-2) {$\bH/r\bZ$ for $r$ odd};
\end{tikzpicture}
\end{array}
\]

For both index sets we consider the automorphism $\Theta$ given by $\Theta(\overline{p}) = -\overline{p}$. Thus in each of the pictured Dynkin diagrams, this is the horizontal reflection along the dotted horizontal line. Depending on whether $r$ is even or odd, and whether considering the index set $\bH/r\bZ$ or $\bZ/r\bZ$, $\Theta$ have between zero and two fixed points on the left or right of the diagram.

In either case of $\bZ/r\bZ$ and $\bH/r\bZ$ we call cosets $\overline{p}$ and $\overline{q}$ linked if $\overline{p} = \overline{q\pm 1}$, i.e. if the corresponding nodes in the affine Dynkin diagram are connected by an edge.

To simplify the notations for the quantum symmetric pair we rewrite one type of quantum Serre relation in form of a non-commutative polynomial
\[
\mathrm{SR}_v(x,y) = x^2y  - [2]_v xyx + yx^2,
\]
for non-commuting variables $x,y$ over $\bQ(v)$. In contrast to the usual quantum group $U_v(\hat{\mathfrak{sl}}_{r})$ the quantum Serre relations for the generators of the quantum symmetric pair depend on the type of the node in the Dynkin diagram of the generator, i.e. they are not invariant under translation of indices.

\begin{definition}
A $\overline{p} \in \bZ/r\bZ$ respectively $\overline{p}\in \bH/r\bZ$ is called 
\begin{enumerate}
\item a fixed index if $\Theta(\overline{p})=\overline{p}$,
\item a $\Theta$-linked index if $\Theta(\overline{p})$ is linked to $\overline{p}$, and
\item a standard index if $\overline{p}$ is neither fixed not $\Theta$-linked.
\end{enumerate}
\end{definition}

The affine quantum symmetric pair is then the following associative algebra.

\begin{definition}{\cite{Kolb}} \label{def:quantumsymmpair}
Let $\rI \in \{\bZ/r\bZ,\bH/r\bZ\}$. The \emph{affine quantum symmetric pair} $B_v(\rI,\Theta)$ is defined to be the associative algebra over $\bQ(v)$ generated by $\{\OPB_{\overline{p}} \mid \overline{p}\in \rI\}$ and $\{\OPL_{\overline{q}}\mid \overline{q}\in \rI$,\, $\overline{q}\neq\Theta(\overline{q})\}$, subject to the following relations: 

For $\overline{p},\overline{q}$ not fixed it holds
\[
\OPL_{\overline{p}} \OPL_{\overline{q}} = \OPL_{\overline{q}} \OPL_{\overline{p}}, \quad \OPL_{\overline{p}}\OPL_{\Theta(\overline{p})} = 1.
\]
For $\overline{p}$ standard, $\overline{r}$ $\Theta$-linked, $\overline{s}$ fixed, and $\overline{q}$ not fixed it holds
\[
\OPL_{\overline{q}}\OPB_{\overline{p}} = \left\lbrace
\begin{array}{rl}
v^2 \OPB_{\overline{p}}\OPL_{\overline{q}} & \text{if } \overline{p}=\overline{q}\\
v^{-2} \OPB_{\overline{p}}\OPL_{\overline{q}} & \text{if } \overline{p}=\Theta(\overline{q}),\\
v^{-1} \OPB_{\overline{p}}\OPL_{\overline{q}} & \text{if } \overline{p},\overline{q} \text{ linked},\\
v \OPB_{\overline{p}}\OPL_{\overline{q}} & \text{if } \overline{p}, \Theta(\overline{q}) \text{ linked},\\
\OPB_{\overline{q}}\OPL_{\overline{q}} & \text{otherwise,} 
\end{array}\right.
\,
\OPL_{\overline{q}}\OPB_{\overline{r}} = \left\lbrace
\begin{array}{rl}
v^3 \OPB_{\overline{r}}\OPL_{\overline{q}} & \text{if } \overline{r}=\overline{q},\\
v^{-3} \OPB_{\overline{r}}\OPL_{\overline{q}} & \text{if } \overline{r}=\Theta(\overline{q}),\\
v^{-1} \OPB_{\overline{r}}\OPL_{\overline{q}} & \text{if } \overline{r}, \overline{q} \text{ linked}, \overline{q} \neq \Theta(\overline{r})\\
v \OPB_{\overline{r}}\OPL_{\overline{q}} & \text{if } \overline{r}, \Theta(\overline{q}) \text{ linked}, \overline{q} \neq \overline{r}\\
\OPB_{\overline{r}}\OPL_{\overline{q}} & \text{otherwise,} 
\end{array}\right.
\]
\[
\OPL_{\overline{q}}\OPB_{\overline{s}} = \OPB_{\overline{s}}\OPL_{\overline{q}}.
\]
The generators $\OPB_{\overline{p}}$ and $\OPB_{\overline{q}}$ commute, unless
\[
\OPB_{\overline{p}}\OPB_{\Theta(\overline{p})}-\OPB_{\Theta(\overline{p})}\OPB_{\overline{p}} = \frac{\OPL_{\overline{p}}-\OPL_{\Theta(\overline{p})}}{v-v^{-1}} \text{ for }  \overline{p} \text{ standard}
\]
or
\[
\mathrm{SR}_v(\OPB_{\overline{p}},\OPB_{\overline{q}}) = 
\left\lbrace
\begin{array}{rl}
0 & \overline{p},\overline{q} \text{ linked, not fixed, and } \Theta(\overline{p})\neq\overline{q},\\
0 & \overline{p},\overline{q} \text{ linked, } \overline{p} \text{ standard, } \overline{q} \text{ fixed},\\
\OPB_{\overline{q}} & \overline{p},\overline{q} \text{ linked, } \overline{p} \text{ fixed, } \overline{q} \text{ standard},\\
-[2]_v \OPB_{\overline{p}} (v\OPL_{\overline{p}} + v^{-2}\OPL_{\Theta(\overline{p})}) & \overline{p},\overline{q} \text{ $\Theta$-linked }, \overline{q}=\Theta(\overline{p}).
\end{array} \right.
\]
\end{definition}

Deriving these definitions from \cite{Kolb} needs some clarifications. The relations between the $\OPB_{\underline{p}}$ are given in \cite[Theorem 7.4]{Kolb}, note that in our language all the $c_i$ that appear in \cite{Kolb} are equal to $1$ and the elements $\mathcal{Z}_j=\OPL_j$ with $j$ an index in the Dynkin diagram. The commutator relations between $\OPB_{\overline{p}}$ and $\OPL_{\overline{q}}$ are found in \cite[(7.7)]{Kolb}.

It is easier to think of $B_v(\rI,\Theta)$ as being an affine analogue of the quantum symmetric pair of type AIII in \cite[Section 7]{Letzter2}. The relations for the quantum symmetric pair of type AIII are nearly the same as for the ordinary quantum group, as in our case. There is either one special generator, corresponding to a fixed index, or two special generators, corresponding to a pair of $\Theta$-fixed indices that do not behave like usual quantum group generators. In contrast to the non-affine case, there are now two such situations. In the Dynkin diagrams above these are the two areas where the dotted line intersects the circle.

\begin{remark} \label{rem:changeingenerators}
Comparing this definition to the ones for non-affine quantum symmetric pairs in \cite[Proposition 7.17 and Proposition 7.18]{ES} (or the idempotent version in \cite{BSWW} in case there are no $\Theta$-linked indices), we see that locally the relations are the same. The pairs $\check{E}_i$ and $\check{F}_i$ in the relations of \cite{ES} are the analogues of $\OPB_{\overline{p}}$ and $\OPB_{\Theta(\overline{p})}$ for $\overline{p}$ standard. Since our generators are ordered in a cyclic way it is not reasonable to choose a ``positive'' and ``negative'' one in such pairs. The generator $\check{B}$ in \cite[Proposition 7.18]{ES} is the analogue of $\OPB_{\overline{p}}$ for a fixed index and the pair of generators $\check{B}_+$ and $\check{B}_-$ in \cite[Proposition 7.17]{ES} are the analogue of $\OPB_{\overline{p}}$ and $\OPB_{\Theta(\overline{p})}$ for $\overline{p}$ being $\Theta$-linked.

In contrast to \cite{ES} we use the analogue of a semi-simple Cartan, while \cite{ES} uses the analogue of a reductive Cartan. This is just for simplicity of notation. One could easily modify the definition and write every generator $\OPB_{\overline{q}}$ as a product of two generators in the vein of the definitions of \cite{ES}.

The only place where the relations differ is the last of the deformed quantum Serre relations in Definition \ref{def:quantumsymmpair}, with $\overline{p}$ being $\Theta$-linked and $\overline{q}=\Theta(\overline{p})$. This is due to the fact that the linear operators $B_{\oneh}$ and $B_{-\oneh}$ in \cite[Definition 7.8]{ES} that give the action of $\check{B}_+$ and $\check{B}_-$ are not symmetric in their definition. This was needed to match the grading of graded category $\cO$ in \cite{ES}, but this is not necessary in our situation.
\end{remark}

\section{Type C} \label{sec:typeC}

For type $C_N$ ($N > 2$) we choose $X=\bigoplus_{i=1}^N \bZ \varepsilon_i$, where the $\varepsilon_i$ are the projection onto the $i$-th diagonal entries for the Cartan subalgebra of diagonal matrices inside $\mathfrak{sp}_{2n}(\bC)$. The $W$-invariant bilinear form $(-,-)$ is given such that the $\varepsilon_i$ form an orthonormal basis. As the positive roots in a root system $\Phi$ in $X$, we choose:
\[
\Phi^+ = \{ \beta_{i,j}^\pm=\varepsilon_i\pm\varepsilon_j \mid 1 \leq i < j \leq N \} \cup \{ \beta_{i}=2\varepsilon_i \mid 1 \leq i \leq N \}.
\]
The simple roots for this choice are $\alpha_i=\beta_{i,i+1}^-$ for $1\leq i < N$ and $\alpha_N=\beta_N$. The corresponding coroots in $X$ are then
\[
(\beta_{i,j}^\pm)^\vee = \beta_{i,j}^\pm \text{ and } \beta_i^\vee = \varepsilon_i.
\]
As defined in Section \ref{sec:quantumgroup} the elements $d_i=1$ for $1 \leq i < N$ and $d_N=2$, since $\alpha_N$ is the only simple long root. This is extended to all positive roots by having $d_\beta=2$ if $\beta=\beta_i$ for some $i$ and $1$ otherwise.

With these choices the fundamental weights are given as $\omega_i = \varepsilon_1 + \ldots + \varepsilon_i$. And thus the dominant integral weights are
\[
X^+ = \{ (\lambda_1,\ldots,\lambda_N) \mid \lambda_i \in \bZ, \lambda_i \geq \lambda_{i+1}, \text{ and } \lambda_N \geq 0\},
\]
where we write row vectors with respect to the basis $\{\varepsilon_i\}_{1\leq i \leq N}$. Then
\[
\rho=\sum_{i=1}^N \omega_i = N\varepsilon_1+(N-1)\varepsilon_2 +\ldots + \varepsilon_N \in X^+
\]
and thus 
\[
X^+_\rho = \{ (\underline{\lambda}_1,\ldots,\underline{\lambda}_N) \mid \underline{\lambda}_i \in \bZ,\, \underline{\lambda}_i > \underline{\lambda}_{i+1}, \text{ and } \underline{\lambda}_N > 0\}.
\]
(Recall the convention that $\underline{\lambda}=\lambda + \rho$ for $\lambda \in X^+$.) 

Since there are roots with $d_\beta \neq 1$ we have to distinguish the case of $\ell$ odd and $\ell$ even as described in Section \ref{sec:quantumgroup}. In case $\ell$ odd, $W_\ell$ is the affine Weyl group of type $C_N$, just scaled by the factor $\ell$. While in case $\ell$ even, $W_\ell$ is the affine Weyl group of type $B_N$ scaled by a factor $\ell$ for the dual root system, i.e. where $\beta_i$ is replaced by $\varepsilon_i$ with coroot $2\varepsilon_i$ instead.

For the action on $[\Uqmod]$ we consider the functor $-\otimes_\bC \Delta_{\mathrm{q}}(\omega_1)$, which is taking the tensor product with the specialization of the quantum analogue of the natural representation.

Since the natural representation is minuscule one obtains the following tensor product decomposition in the generic case.

\begin{proposition}{\cite[Proposition 8.6.3]{HongKang}} \label{prop:tensorproductC}
Let $\lambda \in X^+$. Then in $\Uvmod$ it holds
\[
\Delta_v(\lambda) \otimes \Delta_v(\omega_1) \cong \bigoplus_{i : \lambda + \varepsilon_i \in X^+} \Delta_v(\lambda + \varepsilon_i) \oplus \bigoplus_{i : \lambda - \varepsilon_i \in X^+} \Delta_v(\lambda - \varepsilon_i).
\]
\end{proposition}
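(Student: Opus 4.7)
The plan is to reduce the assertion to a character computation and exploit the fact that $\Delta_v(\omega_1)$ is the natural representation of $U_v(\fg)$ and is minuscule. Concretely, its weights are $\pm\varepsilon_i$ for $1\le i \le N$, each of multiplicity one, and from the root data recalled at the start of Section \ref{sec:typeC} one verifies directly that $(\nu,\alpha^\vee) \in \{-1,0,1\}$ for every weight $\nu$ of $\Delta_v(\omega_1)$ and every $\alpha \in \Phi^+$. Since $\Uvmod$ is semisimple, the tensor product decomposes into Weyl modules, and the decomposition is pinned down by characters; by \eqref{eq:equalcharacters2} these characters are exactly those given by Weyl's character formula, so the computation is the classical one.

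I would write $A(\mu) = \sum_{w \in W} (-1)^{l(w)} e^{w\mu}$ for the alternating sum, so that $A(\rho)\cdot\mathrm{ch}(\Delta_v(\lambda)) = A(\lambda+\rho)$. Since $\mathrm{ch}(\Delta_v(\omega_1))$ is $W$-invariant, the standard reindexing trick inside the alternating sum yields
\[
A(\lambda+\rho)\cdot\mathrm{ch}(\Delta_v(\omega_1)) = \sum_{\nu} A(\lambda+\nu+\rho),
\]
where $\nu$ runs over the weights $\pm\varepsilon_i$ of $\Delta_v(\omega_1)$ (each weight appearing exactly once because the weight spaces are one-dimensional).

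The last and most delicate step is to simplify each term on the right. Because $\lambda \in X^+$ one has $(\lambda+\rho,\alpha^\vee)\ge 1$ for every $\alpha\in\Phi^+$, and combining this with the minuscule bound $(\nu,\alpha^\vee)\ge -1$ gives $(\lambda+\nu+\rho,\alpha^\vee)\ge 0$. If this pairing vanishes for some $\alpha$ then $A(\lambda+\nu+\rho)=0$; otherwise it is strictly positive, hence $\ge 1$ by integrality, which forces $(\lambda+\nu,\alpha^\vee)\ge 0$ for all $\alpha\in\Phi^+$, so $\lambda+\nu \in X^+$ and $A(\lambda+\nu+\rho) = A(\rho)\cdot\mathrm{ch}(\Delta_v(\lambda+\nu))$. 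Dividing through by $A(\rho)$ and specialising $\nu$ to $\pm\varepsilon_i$ produces precisely the decomposition claimed. The main obstacle is recognising this minuscule dichotomy: for a non-minuscule weight one would in general have to conjugate $\lambda+\nu+\rho$ back to the dominant chamber by a Weyl group element and pick up a sign, producing cancellations between different $\nu$; the minuscule hypothesis rules this out, so that no such rewriting occurs and the decomposition is multiplicity-free.
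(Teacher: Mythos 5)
Your argument is correct, but note that the paper does not actually prove this statement: it is quoted verbatim from \cite[Proposition 8.6.3]{HongKang}, so there is no internal proof to match. What you have written is the standard Brauer--Klimyk argument specialised to a minuscule tensor factor: since $\Uvmod$ is semisimple and characters of the $\Delta_v(\mu)$ are linearly independent, it suffices to compute $A(\lambda+\rho)\cdot\mathrm{ch}(\Delta_v(\omega_1))=\sum_\nu A(\lambda+\nu+\rho)$, and the minuscule bound $(\nu,\alpha^\vee)\in\{-1,0,1\}$ (which does hold for $\nu=\pm\varepsilon_k$ against the coroots $\varepsilon_i\pm\varepsilon_j$ and $\varepsilon_i$ of type $C_N$) guarantees that each summand is either zero or already indexed by a regular dominant weight, with no Weyl-group rewriting and hence no signs or multiplicities. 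This is essentially the proof one would find in the literature, and it buys a self-contained verification where the paper settles for a citation; it also makes transparent why the analogous statement in type $B$ (Proposition \ref{prop:tensorproductB}) acquires the extra $\Delta_v(\lambda)^{\oplus\delta_{\mathrm{pos}}}$ summand, namely the failure of $\omega_1$ to be minuscule there because of the zero weight.

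One cosmetic point: from $(\lambda+\nu+\rho,\alpha^\vee)\geq 1$ you can only conclude $(\lambda+\nu,\alpha^\vee)\geq 1-(\rho,\alpha^\vee)$, which is $\geq 0$ directly only when $\alpha$ is simple, since $(\rho,\alpha^\vee)$ can exceed $1$ for non-simple positive roots. This costs nothing --- dominance is checked on simple roots, and then $(\lambda+\nu,\alpha^\vee)\geq 0$ for all $\alpha\in\Phi^+$ follows --- but the clause ``for all $\alpha\in\Phi^+$'' should be routed through the simple roots rather than asserted directly from integrality.
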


In this case the tensor product decomposition is straight forward: If $\lambda + \varepsilon_i$ is dominant the corresponding irreducible module appears and similarly for $\lambda - \varepsilon_i$.

\subsection{Fock space of sequences and operators} \label{sec:fockspaceC}
To embed $\cF(C_N)$ into $\cF_N$, we map $\underline{\lambda}$ to the sequence $\underline{a}_{\underline{\lambda}}$ such that 
\[
\underline{a}_{\underline{\lambda}}(i) = \begin{cases}
1 & \text{if } i \leq 0, \\
1 & \text{if there exists } 1 \leq j \leq N \text{ s.t. } i=\underline{\lambda}_j, \\
0 & \text{otherwise.}
\end{cases}
\]
Note that by construction all non-positive entries in the sequence $\underline{a}$ are $1$ and there are exactly $N$ entries equal to $1$ in the strictly positive part. Hence the sequence $\underline{a}_{\underline{\lambda}}$ has charge $N$. The map is obviously injective. By abuse of notation we simply write $\underline{\lambda}$ for the sequence as well and identify $\cF(C_N)$ with its image.

Looking at the linear operators from Definition \ref{def:movingoperators}, we immediately see the following.

\begin{lemma} \label{lem:nearlystable}
Let $\underline{\lambda} \in \cF(C_N)$ and $r\in\bH$. Then $\re_r \underline{\lambda} \in \cF(C_N)$ and furthermore $\re_r \underline{\lambda}$ is non-zero if and only if $\lambda - \varepsilon_i$ is dominant for $r=\underline{\lambda}_i-\oneh$. 

Similarly, $\rf_r \underline{\lambda} \in \cF(C_N)$, for $r \neq \oneh$, and $\lambda + \varepsilon_i$ is dominant if and only if $\rf_r \underline{\lambda}$ is non-zero for $r=\underline{\lambda}_i+\oneh$.
\end{lemma}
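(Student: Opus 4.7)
The plan is to translate both the moving operators and the dominance conditions into explicit statements about the entries of the $\rho$-shifted weight vector $\underline{\lambda} = (\underline{\lambda}_1, \ldots, \underline{\lambda}_N)$, and show the two descriptions agree. By construction of the embedding, $\underline{a}_{\underline{\lambda}}$ has precisely a $1$ at each position $\underline{\lambda}_j$ in the strictly positive range and a $1$ at every non-positive integer; so a zero entry can only occur at a strictly positive position that is not among the $\underline{\lambda}_j$.

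I would begin with $\re_r$. By Definition \ref{def:movingoperators}, $\re_r \underline{\lambda} \neq 0$ requires $\underline{a}_{\underline{\lambda}}(r+\oneh)=1$ and $\underline{a}_{\underline{\lambda}}(r-\oneh)=0$. The second condition forces $r - \oneh \geq 1$, hence $r + \oneh \geq 2$, so $r + \oneh$ lies in the positive range, which forces $r + \oneh = \underline{\lambda}_i$ for some unique $i$; this pins down $r = \underline{\lambda}_i - \oneh$ as in the statement. The resulting sequence $\underline{b}$ is obtained from $\underline{a}_{\underline{\lambda}}$ by moving the $1$ from position $\underline{\lambda}_i$ to position $\underline{\lambda}_i - 1$, which on the weight side means replacing $\underline{\lambda}_i$ by $\underline{\lambda}_i - 1$, i.e.\ $\underline{b} = \underline{a}_{\underline{\lambda} - \varepsilon_i}$. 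Since $\underline{\mu} = \mu + \rho$, having $\underline{b}$ in the image of the embedding is equivalent to $\underline{\lambda} - \varepsilon_i \in X^+_\rho$, i.e.\ the tuple $\underline{\lambda} - \varepsilon_i$ is strictly decreasing with all entries positive. The only two things that could fail are $\underline{\lambda}_i - 1 > \underline{\lambda}_{i+1}$ (for $i < N$) and $\underline{\lambda}_N - 1 > 0$ (for $i = N$); but each of these failures is exactly the statement that $\underline{\lambda}_i - 1 = \underline{\lambda}_{i+1}$ or $\underline{\lambda}_i - 1 \leq 0$, which in either case puts a $1$ at position $\underline{\lambda}_i - 1$ and hence contradicts $\underline{a}_{\underline{\lambda}}(r-\oneh)=0$. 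Thus the non-vanishing of $\re_r\underline{\lambda}$ is equivalent to $\lambda - \varepsilon_i \in X^+$, and when non-zero the result is $\underline{a}_{\underline{\lambda - \varepsilon_i}} \in \cF(C_N)$.

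The argument for $\rf_r$ is symmetric: $\rf_r \underline{\lambda} \neq 0$ with $r \neq \oneh$ forces $r - \oneh = \underline{\lambda}_i$ for some $i$ and $\underline{a}_{\underline{\lambda}}(r+\oneh)=0$; the result is $\underline{a}_{\underline{\lambda} + \varepsilon_i}$, and the non-vanishing translates to $\underline{\lambda}_{i-1} \neq \underline{\lambda}_i + 1$ (for $i > 1$), i.e.\ $\lambda + \varepsilon_i \in X^+$. The hypothesis $r \neq \oneh$ is essential only to rule out the single edge case $r = \oneh$: there $r - \oneh = 0$ is automatically occupied and $r + \oneh = 1$, so $\rf_{\oneh}\underline{\lambda}$ would toggle the position $0$ from $1$ to $0$, producing a sequence of charge $N$ that no longer satisfies $\underline{a}(i)=1$ for all $i\leq 0$, hence lies outside the image of $\cF(C_N)$. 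Apart from this boundary issue, which is handled by inspection, the proof is a mechanical unpacking of definitions, so I do not expect any deeper obstacle.
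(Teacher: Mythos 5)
Your proof is correct, and it is exactly the routine unpacking of the embedding and of Definition \ref{def:movingoperators} that the paper treats as immediate (the lemma is stated there without proof). In particular you correctly identify the two ways dominance can fail as precisely the two ways position $r-\oneh$ (resp.\ $r+\oneh$) can be occupied, and you correctly isolate $r=\oneh$ as the one case where $\rf_r$ leaves the subspace.
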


Note that all operators $\re_r$ preserve the subspace $\cF(C_N)$ with $\re_r$ acting as zero for all $r \leq \oneh$. On the other hand the operators $\rf_r$ keep the subspace invariant, except for $\rf_{\oneh}$. The obvious reason being that $\rf_{\oneh}$ can move a $1$ from position $0$ to position $1$, leaving the subspace $\cF(C_N)$.

\subsection{Linkage and operators} \label{sec:linkagetypeC}

We are now considering when two weights obtained from $\underline{\lambda}$ by applying two of the operators from above are linked under the assumption that the sequences stay inside $\cF(C_N)$.

\begin{remark}
In the following we often just say: assume $\underline{\mu}=\re_r\underline{\lambda}$ is defined. By this we mean that $\re_r\underline{\lambda}$ is again a sequence coming from the embedding of $\cF(C_N)$.
\end{remark}

Even though the roots of type $C$ are different from type $A$, the following Lemma has pretty much the same proof as in type $A$, as the roots of the form $\beta_{ij}^-$ are the only ones playing a role.

\begin{lemma} \label{lem:linkedEorFs}
Let $\underline{\lambda} \in X^+_\rho$. Assume $\underline{\mu}=\re_r\underline{\lambda}$ and $\underline{\nu}=\re_s\underline{\lambda}$, for $r \neq s$ are defined. Then $\mu$ and $\nu$ are linked if and only if $r \in s + \ell \bZ$.

Assume $\underline{\mu}=\rf_r\underline{\lambda}$ and $\underline{\nu}=\rf_s\underline{\lambda}$, for $r \neq s$, are defined. Then $\mu$ and $\nu$ are linked if and only if $r \in s + \ell \bZ$.
\end{lemma}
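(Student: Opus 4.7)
The plan is to first translate the statement into a concrete residue condition and then prove both directions. By Lemma \ref{lem:nearlystable}, $\underline{\mu}=\re_r\underline{\lambda}$ being defined means $r=\underline{\lambda}_i-\oneh$ for some $i$ and $\underline{\mu}=\underline{\lambda}-\varepsilon_i$; analogously $\underline{\nu}=\underline{\lambda}-\varepsilon_j$ with $s=\underline{\lambda}_j-\oneh$. Hence $r-s=\underline{\lambda}_i-\underline{\lambda}_j$, and the claim reduces to: $\mu,\nu$ are linked if and only if $\underline{\lambda}_i\equiv\underline{\lambda}_j\pmod\ell$. The $\rf$-case is entirely analogous after replacing $-\varepsilon_i,-\varepsilon_j$ by $+\varepsilon_i,+\varepsilon_j$, so I focus on the $\re$-case below.

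For ($\Leftarrow$), I would write $\underline{\lambda}_i-\underline{\lambda}_j=k\ell$ and propose the affine reflection $s_{\beta_{ij}^-,k}\in W_\ell$, with $\beta_{ij}^-=\varepsilon_i-\varepsilon_j$, as witness. Since $d_{\beta_{ij}^-}=1$, we have $\ell_{\beta_{ij}^-}=\ell$ in both parity cases. Unwinding the dot action directly---$s_{\beta_{ij}^-}$ swaps the $i$-th and $j$-th coordinates of $\underline{\nu}=\underline{\lambda}-\varepsilon_j$, and the translation $k\ell(\varepsilon_i-\varepsilon_j)$ realigns them---gives $s_{\beta_{ij}^-,k}\cdot\nu=\mu$, so $\mu,\nu$ are linked.

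For ($\Rightarrow$), my plan is to use a multiset invariant and a short case analysis. Linkage gives $\underline{\mu}=w(\underline{\nu})+\gamma$ with $w\in W$ a signed permutation and $\gamma$ in the translation lattice of $W_\ell$, which in both parities lies inside $\ell\bZ^N$ (noting that $\ell_{\beta_i}=\ell/2$ in the even case produces translations $\ell\varepsilon_i$). Reducing coordinatewise modulo $\ell$, the multiset $M(\underline{x}):=\{[\overline{\underline{x}_k}]\mid k=1,\dots,N\}$ in the quotient $(\bZ/\ell\bZ)/\{\pm 1\}$ is $W_\ell$-invariant, so $M(\underline{\mu})=M(\underline{\nu})$. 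Since $\underline{\mu}$ and $\underline{\nu}$ each differ from $\underline{\lambda}$ in a single coordinate, this multiset equality reduces to the two-element identity $\{[\overline{\underline{\lambda}_i}],[\overline{\underline{\lambda}_j-1}]\}=\{[\overline{\underline{\lambda}_j}],[\overline{\underline{\lambda}_i-1}]\}$ in $(\bZ/\ell\bZ)/\{\pm 1\}$. The main obstacle is the ensuing case split over the two possible pairings and the four independent sign choices in each; I expect every admissible subcase to force $\overline{\underline{\lambda}_i}=\overline{\underline{\lambda}_j}$ in $\bZ/\ell\bZ$, while the remaining sub-cases either collapse to this identity or require $2\equiv 0\pmod\ell$, which is ruled out by the standing assumption $\ell>3$. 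This vindicates the remark that ``only the roots $\beta_{ij}^-$ play a role'': the sign-flip reflections attached to $\beta_i$ and $\beta_{ij}^+$ never produce new linkages beyond the one already witnessed in ($\Leftarrow$).
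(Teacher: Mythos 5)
Your proof is correct, but the forward direction takes a genuinely different route from the paper. The paper's proof invokes Lemma \ref{lem:goodminimalgallery} to choose alcoves $A_{\underline{\mu}}, A_{\underline{\nu}}$ so that a minimal gallery crosses only hyperplanes strictly between them, computes $(\underline{\nu}-\underline{\mu},\beta^\vee)$ for every positive root, observes that only $\beta_{i,j}^-$ attains absolute value $\geq 2$ (a necessary condition for a hyperplane to separate the two integral weights strictly), and concludes that the gallery has length one with $\underline{\nu}=s_{\beta_{i,j}^-,m}\underline{\mu}$, whence the congruence. You instead decompose $W_\ell$ as translations (which you correctly place inside $\ell\bZ^N$ in both parities of $\ell$) extended by signed permutations, and extract the congruence from the invariance of the multiset of coordinates in $(\bZ/\ell\bZ)/\{\pm 1\}$; the converse witness $s_{\beta_{ij}^-,k}$ is the same in both arguments. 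Your approach is more elementary and avoids the alcove machinery and the wall-adjustment of Lemma \ref{lem:goodminimalgallery} altogether, and the case analysis you only sketch does close: the pairing $[b]=[a]$, $[a-1]=[b-1]$ forces $a=b$ (the alternative $b=-a$ with $b\neq a$ leads to $2\equiv 0 \pmod\ell$), while the cross pairing forces $2a\equiv 2b\equiv 1$, which gives $a=b$ for $\ell$ odd and is vacuous for $\ell$ even; so every subcase yields $\underline{\lambda}_i\equiv\underline{\lambda}_j \pmod{\ell}$. What the paper's heavier method buys is reusability: the same gallery-tracking template carries the later, harder linkage lemmas (\ref{lem:linkedEFsamestartodd}, \ref{lem:linkedEFtypesameBodd}, etc.) where several roots can separate the weights and one must follow which hyperplanes are actually crossed, whereas your invariant only ever delivers a necessary condition and would need the explicit reflections supplied separately in each case.
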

\begin{proof}
It holds $\underline{\mu}=\underline{\lambda}-\varepsilon_i$ where $i$ is determined by $\underline{\lambda}_i=r+\oneh$. Similarly $\re_s$ defines $j$ such that $\underline{\lambda}_j=s+\oneh$. Since $s \neq r$ we can assume that $i < j$.

Assume now that the two weights are linked. Using Lemma \ref{lem:goodminimalgallery}, fix alcoves $A_{\underline{\mu}}$ and $A_{\underline{\nu}}$ such that a minimal gallery $A_{\underline{\mu}}=A_0,\ldots,A_t=A_{\underline{\nu}}$ only crosses walls that are strictly between $A_{\underline{\mu}}$ and $A_{\underline{\nu}}$.

It holds
\[
( \underline{\nu}-\underline{\mu},\beta^\vee) = \begin{cases}
2 & \text{if } \beta=\beta_{i,j}^-, \\
1 & \text{if } \beta \in \{\beta_{i,k}^-, \beta_{i,k}^+,\beta_{k,i}^+ \mid k \neq j\} \cup \{\beta_i\},\\
-1 & \text{if } \beta \in \{\beta_{k,i}^- \mid k < i\},\\
0 & \text{otherwise.}
\end{cases}
\]
Since we assume that $\underline{\mu} \neq \underline{\nu}$, but $\mu$ and $\nu$ are linked, the minimal gallery needs to have at least length $1$. Thus there exists a hyperplane strictly between $A_{\underline{\lambda}}$ and $A_{\underline{\mu}}$, otherwise the two alcoves agree which is a contradiction to the simply transitive action of $W_\ell$ on the set of alcoves.

Looking at the values $(\underline{\nu}-\underline{\mu},\beta^\vee)$ above, the only root that can have an affine hyperplane strictly between $A_{\underline{\mu}}$ and $A_{\underline{\nu}}$ is $\beta=\beta^-_{ij}$. Hence the length of the minimal gallery is exactly $1$ and it holds $\underline{\nu} = s_{\beta_{i,j}^-,m}\underline{\mu}$ for some $m\in\bZ$. Let $H=H_{\beta^-_{ij},m}$ be this hyperplane. Hence
\[
1 + m \ell=( \underline{\mu},(\beta_{i,j}^-)^\vee) = \underline{\lambda}_i - \underline{\lambda}_j + 1.
\]
So we obtain that $\underline{\lambda}_i = \underline{\lambda}_j + m\ell$ and so $r \in s +\ell \bZ$.

Now assume that $r \in s +\ell \bZ$. Then $\underline{\lambda}_i-\underline{\lambda}_j = m\ell$ for some $m\in\bZ$. Hence $(\underline{\nu},(\beta_{ij}^-)^\vee) = 1 + m\ell$ and thus $s_{\beta_{ij}^-,m}(\underline{\nu})=\underline{\mu}$ and so the weights $\mu$ and $\nu$ are linked.

The statement for the $\rf$-operators is done completely analogous.
\end{proof}

Lemma \ref{lem:linkedEorFs} can be used word for word to see that in type $A$ linked weights are obtained by operators with indices congruent modulo $\ell$.

\begin{lemma}\label{lem:linkedEFdifferentstart}
Let $\underline{\lambda} \in X^+_\rho$. Assume $\underline{\mu}=\re_r\underline{\lambda}$ and $\underline{\nu}=\rf_s\underline{\lambda}$, for $r+1 \neq s$, are defined. Then $\mu$ and $\nu$ are linked if and only if $r \in -s + \ell \bZ$.
\end{lemma}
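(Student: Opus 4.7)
My plan closely mirrors the proof of Lemma \ref{lem:linkedEorFs}. First, I would invoke Lemma \ref{lem:nearlystable} to write $\underline{\mu} = \underline{\lambda} - \varepsilon_i$ where $\underline{\lambda}_i = r + \oneh$, and $\underline{\nu} = \underline{\lambda} + \varepsilon_j$ where $\underline{\lambda}_j = s - \oneh$. The hypothesis $r + 1 \neq s$ rules out $i = j$ (otherwise $r + \oneh = s - \oneh$), so $\underline{\nu} - \underline{\mu} = \varepsilon_i + \varepsilon_j$ with $i \neq j$.

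For the forward implication, assume $\mu$ and $\nu$ are linked and use Lemma \ref{lem:goodminimalgallery} to fix alcoves $A_{\underline{\mu}}$ and $A_{\underline{\nu}}$ whose minimal gallery crosses only hyperplanes strictly between them. A case analysis of $(\underline{\nu} - \underline{\mu}, \beta^\vee)$ for $\beta \in \Phi^+$ shows that this quantity equals $2$ exactly for the root $\varepsilon_i + \varepsilon_j$, equals $\pm 1$ for roots of the form $\beta_{k,l}^\pm$ or $\beta_k$ meeting $\{i,j\}$ in a single index, and is $0$ otherwise. A hyperplane $H_{\beta,k}$ is strictly between the alcoves only if the open interval with endpoints $(\underline{\mu}, \beta^\vee)$ and $(\underline{\nu}, \beta^\vee)$ contains the integer $k\ell_\beta$. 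In all type $C_N$ cases, $\ell_\beta \in \{\ell, \ellh\}$ with $\ell > 3$, hence $\ell_\beta \geq 2$, while the pairings $\pm 1$ give open intervals of length one with integer endpoints, which contain no integer. Thus only $\beta = \varepsilon_i + \varepsilon_j$ can contribute, and since this is a short root we have $\ell_\beta = \ell$, so the only candidate hyperplane is $H_{\varepsilon_i + \varepsilon_j, m}$ with $m\ell$ strictly between $\underline{\lambda}_i + \underline{\lambda}_j - 1$ and $\underline{\lambda}_i + \underline{\lambda}_j + 1$. This forces $m\ell = \underline{\lambda}_i + \underline{\lambda}_j = r + s$. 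Since $\underline{\mu} \neq \underline{\nu}$ the minimal gallery has positive length, so at least one such hyperplane must exist, yielding $r + s \in \ell \bZ$.

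For the converse, suppose $r + s = m\ell$ for some $m \in \bZ$. Since $(\underline{\mu}, (\varepsilon_i+\varepsilon_j)^\vee) = \underline{\lambda}_i + \underline{\lambda}_j - 1 = m\ell - 1$, the affine reflection formula gives
\[
s_{\varepsilon_i+\varepsilon_j, m}(\underline{\mu}) = \underline{\mu} - \bigl((\underline{\mu}, (\varepsilon_i+\varepsilon_j)^\vee) - m\ell\bigr)(\varepsilon_i + \varepsilon_j) = \underline{\mu} + \varepsilon_i + \varepsilon_j = \underline{\nu},
\]
so $\mu$ and $\nu$ lie in the same $W_\ell$-orbit under the dot action and are therefore linked.

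The main obstacle is not conceptual but bookkeeping: one has to verify that among all type $C_N$ positive roots — including the long roots $\beta_k = 2\varepsilon_k$ whose $\ell_\beta$ drops to $\ellh$ when $\ell$ is even — only $\varepsilon_i + \varepsilon_j$ produces a pairing large enough for a hyperplane to fit strictly between $A_{\underline{\mu}}$ and $A_{\underline{\nu}}$. The standing assumption $\ell > 3$ is precisely what rules out the even long-root case.
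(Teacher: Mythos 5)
Your proposal is correct and follows essentially the same route as the paper: identify $\underline{\mu}=\underline{\lambda}-\varepsilon_i$, $\underline{\nu}=\underline{\lambda}+\varepsilon_j$, use Lemma \ref{lem:goodminimalgallery}, observe that $\beta_{i,j}^+$ is the only positive root with $|(\underline{\nu}-\underline{\mu},\beta^\vee)|>1$ and hence the only one whose hyperplanes can lie strictly between the alcoves, and read off $r+s\in\ell\bZ$; the converse via the explicit reflection $s_{\beta_{i,j}^+,m}$ is also the paper's argument. You merely spell out the integrality reason why the $\pm 1$ pairings cannot contribute, which the paper leaves implicit.
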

\begin{proof}
It holds $\underline{\mu}=\underline{\lambda}-\varepsilon_i$ where $i$ is determined by $\underline{\lambda}_i=r+\oneh$. Similarly $\rf_s$ defines $j$ such that $\underline{\lambda}_j=s-\oneh$. Since $s \neq r+1$ it holds that $i \neq j$.

Assume that the weights are linked. Fix $A_{\underline{\mu}}$ and $A_{\underline{\nu}}$ via Lemma \ref{lem:goodminimalgallery} such that the minimal gallery only crosses hyperplanes that are strictly between the two alcoves.

Then we check that the only possibility for $|(\underline{\mu}-\underline{\nu},\beta^\vee)| > 1$ is $(\underline{\mu}-\underline{\nu},\beta^\vee)=2$ for the choice $\beta = \beta_{i,j}^+$ (Technically $\beta_{i,j}^+$ is only defined for $i<j$, so for $i>j$ we just set $\beta_{i,j}^+=\beta_{j,i}^+$). Thus the unique affine hyperplane strictly between $A_{\underline{\mu}}$ and $A_{\underline{\nu}}$ is of the form $H_{\beta_{i,j}^+,m}$ and so it holds $\underline{\nu} = s_{\beta_{i,j}^+,m}\underline{\mu}$. In formulas for the weights we thus get
\[
1 + m \ell=(\underline{\mu},(\beta_{i,j}^+)^\vee) = \underline{\lambda}_i + \underline{\lambda}_j + 1.
\]
So we obtain $\underline{\lambda}_i = -\underline{\lambda}_j + m\ell$ and thus so $r \in -s + \ell \bZ$.

Assume that $r \in -s + \ell \bZ$, then $\underline{\lambda}_i+\underline{\lambda}_j = m\ell$ for some $m\in\bZ$. Hence $(\underline{\nu},(\beta_{ij}^+)^\vee) = 1 + m\ell$ and thus $s_{\beta_{ij}^+,m}(\underline{\nu})=\underline{\mu}$ and so the weights $\mu$ and $\nu$ are linked.
\end{proof}

This is the main difference to type $A$. The relationship of the form $r \in -s + \ell \bZ$ forces us to fix a type of origin. Hence the embedding of $\cF(C_N)$ depends on $N$ in this case.

\begin{lemma}\label{lem:linkedEFsamestartodd}
Let $\ell$ be odd and $\underline{\lambda} \in X^+_\rho$. Assume $\underline{\mu}=\re_{r-1}\underline{\lambda}$ and $\underline{\nu}=\rf_{r}\underline{\lambda}$ are defined. Then $\mu$ and $\nu$ are linked if and only if $r \in \oneh + \ell \bZ$.
\end{lemma}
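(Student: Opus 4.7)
The plan is to argue as in Lemmas \ref{lem:linkedEorFs} and \ref{lem:linkedEFdifferentstart}, but with a delicate new feature: since both $\re_{r-1}$ and $\rf_r$ act on the entry at position $r-\oneh$, there is a single index $k$ with $\underline{\lambda}_k=r-\oneh$, giving $\underline{\mu}=\underline{\lambda}-\varepsilon_k$, $\underline{\nu}=\underline{\lambda}+\varepsilon_k$, and thus $\underline{\nu}-\underline{\mu}=2\varepsilon_k$. Pairing with the positive coroots shows $(\underline{\nu}-\underline{\mu},\beta^\vee)\in\{0,\pm 2\}$, with the value $\pm 2$ achieved precisely for the roots $\beta_{k,j}^{\pm}$, $\beta_{j,k}^{\pm}$ ($j\neq k$), and the long root $\beta_k$. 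In contrast to the previous two lemmas, several families of hyperplanes can then lie strictly between $A_{\underline{\mu}}$ and $A_{\underline{\nu}}$, so the minimal gallery analysis does not immediately single out a unique reflection.

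For the ``if'' direction, assume $r\in\oneh+\ell\bZ$ and set $m=(r-\oneh)/\ell\in\bZ$. Using $\beta_k^\vee=\varepsilon_k$, $\ell_{\beta_k}=\ell$, and $(\underline{\mu},\varepsilon_k)=r-\threeh$, the computation
\[
s_{\beta_k,m}(\underline{\mu}) \;=\; \underline{\mu} - \bigl((\underline{\mu},\varepsilon_k)-m\ell\bigr)\cdot 2\varepsilon_k \;=\; \underline{\mu}+2\varepsilon_k \;=\; \underline{\nu}
\]
exhibits the required element of $W_\ell$.

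The main obstacle is the ``only if'' direction. To handle it I would bypass the local gallery analysis and use the global description $W_\ell = W\ltimes \ell Q$, available because $\ell$ is odd so that $\ell_\beta=\ell$ for every positive root. Here $W=\{\pm 1\}^N\rtimes S_N$ acts by signed permutations and $Q$ is the type $C_N$ root lattice. Linkedness then means $\underline{\nu}=w\underline{\mu}+\ell q$ for some $w\in W$, $q\in Q$. Reducing modulo $\ell$ and using that signed permutations preserve the pair-multiset $\{\{\pm\underline{\mu}_i \bmod \ell\}\}_i$, together with multiset cancellation of the $N-1$ coordinates shared by $\underline{\mu}$ and $\underline{\nu}$, the condition reduces to $\{\pm(r-\threeh)\}\equiv \{\pm(r+\oneh)\}\pmod{\ell}$. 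The option $r-\threeh\equiv r+\oneh$ is ruled out since it would force $\ell\mid 2$, contradicting $\ell>3$. The remaining option $r-\threeh\equiv -(r+\oneh)$ reads $\ell\mid 2r-1$; writing $2r-1=\ell m$ and using that $\ell$ is odd together with $r\in\bH$ forces $m$ to be even, so $r\in\oneh+\ell\bZ$, as required.
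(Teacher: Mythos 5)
Your argument is correct, and while the ``if'' direction coincides with the paper's (an explicit affine reflection in $H_{\beta_k,m}$; the paper applies $s_{\beta_i,m}$ to $\underline{\nu}$ rather than to $\underline{\mu}$, which is the same computation), your ``only if'' direction is genuinely different. The paper stays inside the alcove geometry: it invokes Lemma \ref{lem:goodminimalgallery} to choose alcoves $A_{\underline{\mu}}$, $A_{\underline{\nu}}$ whose minimal gallery crosses only hyperplanes strictly between them, observes that such hyperplanes must belong to roots $\gamma$ with $|(\underline{\nu}-\underline{\mu},\gamma^\vee)|\geq 2$, and then runs a case analysis on the first crossed root $\gamma_1$: the case $\gamma_1=\beta_i$ gives the claim at once, while the cases $\gamma_1=\beta_{i,j}^{\pm}$ force a second crossing for the partner root $\beta_{i,j}^{\mp}$, and adding the two resulting congruences gives $2\underline{\lambda}_i\in\ell\bZ$, whence $\underline{\lambda}_i\in\ell\bZ$ by oddness of $\ell$. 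You bypass this entirely by using the global decomposition $W_\ell=W\ltimes\ell Q$ (valid here precisely because $\ell_\beta=\ell$ for all $\beta$ when $\ell$ is odd), reducing modulo $\ell$, and comparing the $W$-invariant multiset of $\pm$-classes of coordinates; after cancelling the $N-1$ shared coordinates this collapses to $\ell\mid 2$ or $\ell\mid 2r-1=2\underline{\lambda}_k$, and the same parity argument finishes. Your route is shorter, avoids the gallery bookkeeping, and in fact would prove Lemmas \ref{lem:linkedEorFs} and \ref{lem:linkedEFdifferentstart} uniformly as well; its cost is that it leans on the semidirect product structure with a single scaling factor, which is exactly what breaks when $\ell$ is even or in type $B$ (where the translation lattice becomes $\sum_\alpha \ell_\alpha\bZ\alpha$ and the $\pm$-multiset modulo a single $\ell$ is no longer the right invariant), whereas the paper's local gallery technique is the one reused verbatim in Lemmas \ref{lem:linkedEFsamestarteven} and \ref{lem:linkedEFtypesameBodd}. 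One small point of hygiene: you should state explicitly that linkage of $\mu$ and $\nu$ is equivalent to $\underline{\nu}\in W_\ell\,\underline{\mu}$ for the $\rho$-shifted affine-linear action, which is how the paper's dot-action definition translates; with that made explicit the proof is complete.
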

\begin{proof}
In this case there exists $i$ such that $\underline{\mu}=\underline{\lambda}-\varepsilon_i$ and $\underline{\nu}=\underline{\lambda}+\varepsilon_i$ for $i$ determined by $\underline{\lambda}_i=r-\oneh$.

Assume that the weights are linked and fix alcoves $A_{\underline{\mu}}$ and $A_{\underline{\nu}}$ such that a minimal gallery only crosses hyperplanes strictly between the alcoves, using Lemma \ref{lem:goodminimalgallery}.

Checking the values of $(\underline{\nu}-\underline{\mu},\beta^\vee)$, there are multiple choices for hyperplanes that can be strictly between the chosen alcoves. Namely we have
\[
(\underline{\nu}-\underline{\mu},\beta^\vee) = \begin{cases}
2 & \text{if } \beta=\beta_{i}, \\
2 & \text{if } \beta\in\{\beta_{i,j}^-,\beta_{i,j}^+,\beta_{j,i}^+ \mid j \neq i\}, \\
-2 & \text{if } \beta\in\{\beta_{j,i}^- \mid j \neq i\},\\
0 & \text{otherwise.}
\end{cases}
\]
Denote by $H_1=H_{\gamma_1,m_1},\ldots,H_t=H_{\gamma_t,m_t}$ be the hyperplanes crossed by the minimal gallery. By construction each $H_i$ is strictly between $A_{\underline{\mu}}$ and $A_{\underline{\nu}}$, hence for each $\gamma_i$ it holds $(\underline{\mu}-\underline{\nu},\gamma_i^\vee)=\pm 2$ and especially $( \underline{\lambda},\gamma_i^\vee) = m_i\ell$, i.e. $\underline{\lambda}\in H_i$.

We now go through the different cases for $\gamma_1$.

\textbf{Case: $\gamma_1=\beta_i$:} In this case it holds that
\[
1+m_1\ell=(\underline{\nu},\beta_i^\vee) = \underline{\lambda}_i+1.
\]
Thus $\underline{\lambda}_i \in \ell\bZ$ and so $r\in \oneh +\ell\bZ$. Since in this case it immediately holds $s_{\beta_i,m_1}\underline{\nu}=\underline{\mu}$ it follows that $s_{\beta_i,m_1}A_{\underline{\nu}} = A_{\underline{\mu}}$ and so the minimal gallery had length 1.

\textbf{Case: $\gamma_1=\beta_{i,j}^-$ for some $j > i$:} In this case the equation is
\[
1+m_1\ell=(\underline{\nu},(\beta_{i,j}^-)^\vee)= \underline{\lambda}_i-\underline{\lambda}_j+1.
\]
Thus $\underline{\lambda}_i=\underline{\lambda}_j+m_1\ell$ and it holds $s_{\beta_{i,j}^-,m_1}\underline{\nu}=\underline{\lambda}+\varepsilon_j$. For every hyperplane $H_q$ for $q>1$ and $\gamma_q \neq \beta_{i,j}^+$ it holds $m_q\ell=(\underline{\lambda},\gamma_p^\vee) =(\underline{\lambda}+\varepsilon_j,\gamma_p^\vee)$, hence $\underline{\lambda}+\varepsilon_j \in H_q$. Thus applying $s_{\gamma_q,m_q}$ leaves $\underline{\lambda}+\varepsilon_j$ invariant. Since $\underline{\lambda}+\varepsilon_j \neq \mu$, there must exists $p>1$ such that $\gamma_p=\beta_{i,j}^+$. In which case
\[
1+m_p\ell=(\underline{\lambda}+\varepsilon_i,(\beta_{i,j}^+)^\vee)=(\underline{\lambda}+\varepsilon_j,(\beta_{i,j}^+)^\vee) = \underline{\lambda}_i+\underline{\lambda}_j+1.
\]
Hence $\underline{\lambda}_i=-\underline{\lambda}_j+m_p\ell$. Thus $2\underline{\lambda}_i = (m_p+m_1)\ell$. Since $2\underline{\lambda}_i$ is an even integer and $\ell$ is odd, $(m_p+m_1)$ is even and so $\underline{\lambda}_i \in \ell\bZ$ and equivalently $r \in \oneh+\ell\bZ$.

\textbf{Case: $\gamma_1=\beta_{i,j}^+$ for some $j > i$:} This is nearly identical to the previous case. The only difference is that one first obtains $\underline{\lambda}_i\in -\underline{\lambda}_j+\ell\bZ$, applying the first reflection gives $\underline{\lambda}-\varepsilon_j$, and the second used hyperplane is for $\beta_{i,j}^-$ and one obtains $\underline{\lambda}_i\in \underline{\lambda}_j+\ell\bZ$. The rest of the argument is then the same.

\textbf{Case: $\gamma_1=\beta_{j,i}^-$ for some $j < i$:} In this case we start with 
\[
-1+m_1\ell=( \underline{\nu},(\beta_{j,i}^-)^\vee) = \underline{\lambda}_j-\underline{\lambda}_i-1.
\]
Hence $\underline{\lambda}_i\in \underline{\lambda}_j + \ell\bZ$ and $s_{\gamma_1,m_1}(\underline{\nu})=\underline{\lambda}+\varepsilon_j$ and the rest is as in the cases before.

\textbf{Case: $\gamma_1=\beta_{j,i}^+$ for some $j < i$:} This similarly follows as the previous cases.

For the converse, assume $r \in \oneh + \ell \bZ$. Then $\underline{\lambda}_i = m \ell$ for some $m\in\bZ$. Hence $(\underline{\nu},(\beta_{i})^\vee) = 1 + m\ell$ and thus $s_{\beta_{i},m}(\underline{\nu})=\underline{\mu}$ and so the weights $\mu$ and $\nu$ are linked.
\end{proof}

For the case of $\ell$ odd Lemma \ref{lem:linkedEFsamestartodd}, gives a special case of Lemma \ref{lem:linkedEFdifferentstart} with a slightly more delicate proof. This special case will lead to generators for the quantum symmetric pair of non-standard indices. The special case for $\ell$ even is handled in the next statement.

\begin{lemma}\label{lem:linkedEFsamestarteven}
Let $\ell$ be even and $\underline{\lambda} \in X^+_\rho$. Assume $\underline{\mu}=\re_{r-1}\underline{\lambda}$ and $\underline{\nu}=\rf_{r}\underline{\lambda}$ are defined. Then $\mu$ and $\nu$ are linked if and only if $r \in \oneh + (\ellh)\bZ$.
\end{lemma}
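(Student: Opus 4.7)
The plan is to adapt the proof of Lemma \ref{lem:linkedEFsamestartodd} almost verbatim, keeping track of the two arithmetic adjustments caused by $\ell$ being even. Recall that when $\ell$ is even and $\beta_i = 2\varepsilon_i$ is a long root, we have $d_{\beta_i}=2$, so $\ell_{\beta_i} = \ell/\gcd(\ell,2) = \ell/2$. Thus the hyperplanes $H_{\beta_i,k}$ sit at $(\lambda + \rho, \beta_i^\vee) = k\ell/2$, spaced by $\ell/2$ along $\varepsilon_i$ instead of by $\ell$. For the short roots $\beta_{i,j}^\pm$ we continue to have $\ell_\beta = \ell$.

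For the ``if'' direction I would take $r \in \oneh + (\ell/2)\bZ$, so that $\underline{\lambda}_i = m\ell/2$ for the index $i$ satisfying $\underline{\lambda}_i = r - \oneh$. Then $(\underline{\nu}, \beta_i^\vee) = \underline{\lambda}_i + 1 = 1 + m\ell_{\beta_i}$ and $s_{\beta_i,m}(\underline{\nu}) = \underline{\mu}$, so $\mu$ and $\nu$ are linked. For the ``only if'' direction I would again choose alcoves $A_{\underline{\mu}}$ and $A_{\underline{\nu}}$ by Lemma \ref{lem:goodminimalgallery} so that every wall $H_q = H_{\gamma_q, m_q}$ crossed by the minimal gallery is strictly between them, and run the same case analysis on the first wall $\gamma_1$ as in the proof of Lemma \ref{lem:linkedEFsamestartodd}. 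In the case $\gamma_1 = \beta_i$, the equation $1 + m_1(\ell/2) = \underline{\lambda}_i + 1$ gives $\underline{\lambda}_i \in (\ell/2)\bZ$ directly. In the remaining cases, where $\gamma_1 \in \{\beta_{i,j}^-, \beta_{i,j}^+, \beta_{j,i}^-, \beta_{j,i}^+\}$ for some $j \neq i$, the same pairing argument as in the odd case shows that the minimal gallery must also cross a hyperplane for the companion root $\beta_{i,j}^+$ (respectively $\beta_{i,j}^-$), producing the two congruences $\underline{\lambda}_i - \underline{\lambda}_j \in \ell\bZ$ and $\underline{\lambda}_i + \underline{\lambda}_j \in \ell\bZ$, and hence $2\underline{\lambda}_i \in \ell\bZ$.

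The main conceptual difference from the odd case is that we can no longer invoke the parity argument $(m_p + m_1) \in 2\bZ$ to promote this to $\underline{\lambda}_i \in \ell\bZ$; for $\ell$ even, $2\underline{\lambda}_i \in \ell\bZ$ gives only $\underline{\lambda}_i \in (\ell/2)\bZ$, which is precisely the condition $r \in \oneh + (\ell/2)\bZ$. This weakening is consistent with the fact that the long-root hyperplanes $H_{\beta_i,k}$ are now spaced by $\ell/2$, so they already supply direct reflections $s_{\beta_i,m}$ taking $\underline{\nu}$ to $\underline{\mu}$ whenever $\underline{\lambda}_i \in (\ell/2)\bZ$; thus the ``extra'' linked weights one might expect from the finer grid are exactly the ones not captured in the odd case.
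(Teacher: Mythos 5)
Your proposal is correct and follows essentially the same route as the paper: both run the proof of Lemma \ref{lem:linkedEFsamestartodd} again and isolate the same two changes, namely that in the case $\gamma_1=\beta_i$ the relevant hyperplanes are now spaced by $\ell_{\beta_i}=\ellh$, and that in all remaining cases the congruence $2\underline{\lambda}_i\in\ell\bZ$ can simply be divided by $2$ to give $\underline{\lambda}_i\in(\ellh)\bZ$, with the converse supplied by the reflection $s_{\beta_i,m}$. No further changes are needed.
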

\begin{proof}
The arguments in the proof follow the ones for Lemma \ref{lem:linkedEFsamestartodd} so we only point out the differences.

\textbf{Case: $\gamma_1 = \beta_i$:} In contrast to Lemma \ref{lem:linkedEFsamestartodd} we now obtain that $\underline{\lambda}_i \in (\ellh)\bZ$ and so $r\in \oneh +(\ellh)\bZ$. This is of course due to $\ell_{\beta_i}=\ellh$.

In all other cases we obtain $2\underline{\lambda}_i \in \ell\bZ$ as before, but now we can simply divide $\ell$ by $2$ and obtain $\underline{\lambda}_i \in (\ellh)\bZ$, which in turn implies $r \in \oneh + (\ellh)\bZ$.

For the converse, assume $r \in \oneh + (\ellh)\bZ$. Then $\underline{\lambda}_i = m \ellh$ for some $m\in\bZ$. Hence $(\underline{\nu},(\beta_{i})^\vee) = 1 + m\ellh$ and thus $s_{\beta_{i},m}(\underline{\nu})=\underline{\mu}$ since $\ell_{\beta_i}=\ellh$ and so the weights $\mu$ and $\nu$ are linked.
\end{proof}

In contrast to Lemma \ref{lem:linkedEFsamestartodd}, in Lemma \ref{lem:linkedEFsamestarteven} we see that for $\ell$ even the situation that $\re_{r-1}$ and $\rf_{r}$ produce linked weights happens for two types of positions, for a $1$ at a position in $\ell\bZ$ or at a position in $\ellh+\ell\bZ$. This is reflected in the existence of two $\Theta$-linked pairs of indices in the Dynkin diagram for $\bH/\ell\bH$ and $\ell$ even in Section \ref{sec:quantumsymmpair}.

\subsection{Quantum symmetric pair action} \label{sec:quantumsympairactionC}

We now define operators on $\cF_N$ as follows. Recall the automorphism $\Theta:\bH/\ell\bZ \rightarrow \bH/\ell\bZ$ from Section \ref{sec:quantumsymmpair} that changes the sign.

\begin{definition} \label{def:quantumsymoperatorsC}
Let $\underline{a} \in \mathcal{S}_{\bZ}$ and $\overline{p} \in \bH/\ell\bZ$. For $\Theta(\overline{p})\neq \overline{p}$ we define
\begin{align*}
\OPB_{\overline{p}}\, \underline{a} &= v^{T_\ell^{e-f}(\Theta(\overline{p}),\underline{a})}\sum_{j \in \overline{p}} v^{R_\ell^{e-f}(j,\underline{a})} \re_j\,\underline{a} + \sum_{j \in -\overline{p}} v^{L_\ell^{f-e}(j,\underline{a})} \rf_j\,\underline{a} \text{ and }\\
\OPL_{\overline{p}} \underline{a} &= v^{T_\ell^{f-e}(\overline{p},\underline{a})}v^{T_\ell^{e-f}(\Theta(\overline{p}),\underline{a})} \underline{a}.
\end{align*}
For $\Theta(\overline{p})=\overline{p}$, i.e. $\overline{p}=\overline{\ellh}$, we define
\[
\OPB_{\overline{\ellh}} \underline{a} = v^{-1} v^{T_\ell^{e-f}(\overline{\ellh},\underline{a})}\sum_{j \in \overline{\ellh}} v^{R_\ell^{e-f}(j,\underline{a})} \re_j\underline{a} + \sum_{j \in \overline{\ellh}} v^{L_\ell^{f-e}(j,\underline{a})} \rf_j\underline{a}.
\]
This defines linear operators on $\cF$ that restrict to $\cF_N$ for any $N$.
\end{definition}

As mentioned at the very end of Section \ref{sec:typeA}, the action of $U_v(\hat{\mathfrak{sl}}_\ell)^\prime$ is defined on any $\cF_N$ with the same definitions as for $\cF_0$. Thus by comparing we get the following.

\begin{lemma} \label{lem:comparetypeA}
Let $\underline{a} \in \mathcal{S}_{\bZ}$ and $\overline{p} \in \bH/\ell\bZ$. For $\Theta(\overline{p})\neq \overline{p}$
\[
\OPB_{\overline{p}}\, \underline{a} = \OPE_{\overline{p}}\OPK_{-\overline{p}}^{-1}\,\underline{a} + \OPF_{-\overline{p}}\,\underline{a} \quad \text{ and } \quad \OPL_{\overline{p}} \underline{a} = \OPK_{\overline{p}}\OPK_{-\overline{p}}^{-1} \underline{a}
\]
and
\[
\OPB_{\overline{\ellh}}\, \underline{a} = v^{-1}\OPE_{\overline{\ellh}}\OPK_{\overline{\ellh}}^{-1}\,\underline{a} + \OPF_{\overline{\ellh}}\,\underline{a}
\]
\end{lemma}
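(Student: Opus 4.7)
The plan is to unwind all right-hand sides using the type $A$ definitions from Section \ref{sec:typeA} and match them term-by-term with Definition \ref{def:quantumsymoperatorsC}. This is pure bookkeeping; no new ideas are needed.

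Two elementary observations drive the argument. First, for every coset $\overline{q}$, the operator $\OPK_{\overline{q}}$ acts diagonally on the basis as multiplication by $v^{T_\ell^{f-e}(\overline{q},\underline{a})}$, so $\OPK_{\overline{q}}^{-1}$ multiplies by $v^{T_\ell^{e-f}(\overline{q},\underline{a})}$. Second, under the identification of the index set with $\bH/\ell\bZ$, the involution $\Theta$ is simply negation: $\Theta(\overline{p}) = -\overline{p}$.

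With these in hand I proceed as follows. Because $\OPK_{-\overline{p}}^{-1}\underline{a}$ is a scalar multiple of $\underline{a}$, applying $\OPE_{\overline{p}}$ next just pulls the scalar $v^{T_\ell^{e-f}(\Theta(\overline{p}),\underline{a})}$ out of the $\re_j$-sum, producing the first summand of $\OPB_{\overline{p}}\underline{a}$. The term $\OPF_{-\overline{p}}\underline{a}$ is, by definition, exactly the second summand of $\OPB_{\overline{p}}\underline{a}$ once one notes $-\overline{p}=\Theta(\overline{p})$. For $\OPL_{\overline{p}}$, multiplying the two diagonal scalars gives precisely $v^{T_\ell^{f-e}(\overline{p},\underline{a})+T_\ell^{e-f}(\Theta(\overline{p}),\underline{a})}$, which is the scalar in Definition \ref{def:quantumsymoperatorsC}. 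For the fixed index $\overline{\ellh}$ (which is a half-integer coset only when $\ell$ is odd), the same rewriting produces the scalar $v^{T_\ell^{e-f}(\overline{\ellh},\underline{a})}$ in front of the $\re$-sum, and the extra prefactor $v^{-1}$ appearing in the lemma matches the $v^{-1}$ placed in Definition \ref{def:quantumsymoperatorsC}; the $\OPF_{\overline{\ellh}}\underline{a}$ term is identical to the second summand of $\OPB_{\overline{\ellh}}\underline{a}$.

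The only obstacle is purely notational: keeping straight that $\Theta$ is negation on cosets and that $T_\ell^{e-f}$ is the negative of $T_\ell^{f-e}$. No further technology is required; the lemma is a sanity check confirming that the operators of Definition \ref{def:quantumsymoperatorsC} are built from the type $A$ operators in the standard manner for a quantum symmetric pair of type AIII (cf. Remark \ref{rem:changeingenerators}).
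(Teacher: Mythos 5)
Your proof is correct and matches the paper's approach: the paper states this lemma without a written proof, asserting it follows ``by comparing'' the type $A$ operators of Section \ref{sec:typeA} with Definition \ref{def:quantumsymoperatorsC}, which is exactly the term-by-term unwinding you carry out. Your two observations (that $\OPK_{\overline{q}}^{\pm 1}$ acts diagonally by $v^{\mp T_\ell^{f-e}(\overline{q},\underline{a})}$ and that $\Theta(\overline{p})=-\overline{p}$) are precisely what is needed, and your parenthetical that $\overline{\ellh}$ is a half-integer coset only for $\ell$ odd is a correct and useful clarification.
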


The definition of the linear operators $\OPB_{\overline{p}}$ is the reason why we do not embed into the Fock space of charge $0$ (as one does in type $A$). The linear operator $\OPB_{\overline{p}}$ involves the $\re_i$'s for $i \in \overline{p}$, but also $\rf_j$'s for $-j\in\overline{p}$. Thus the definition is not invariant under arbitrary translation as the linear operators for the type $A$ action, only under translation by multiples of $\ell$.

\begin{remark}
Note that for $\underline{\lambda} \in \cF(C_N)$ it holds $\OPB_{\overline{p}} \underline{\lambda} \in \cF(C_N)$ if $-\oneh \notin \overline{p}$. For $\OPB_{\overline{\ellh}}$ one just needs to consider the summand using $\rf_{\oneh}$ that can produce a sequence that is not contained in $\cF(C_N)$. This is exactly parallel to the situation in type $A_{N-1}$ where one can apply an operator $\OPF_{\overline{p}}$, that in the language of Young diagrams, creates a box in row $N+1$, thus leaving the span of polynomial weights for $\mathfrak{gl}_N$.
\end{remark}

For the relations of the operators we then get the following.

\begin{proposition} \label{prop:operator_relations_C}
The linear operators $\{\OPB_{\overline{p}} \mid \overline{p} \in \bH/\ell\bZ\}$ and $\{\OPL_{\overline{p}} \mid \overline{p} \in \bH/\ell\bZ, \Theta(\overline{p})\neq\overline{p}\}$ satisfy the relations given in Definition \ref{def:quantumsymmpair} by substituting the operators for the generators with the same name.
\end{proposition}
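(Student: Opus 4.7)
The plan is to reduce the verification to relations inside the quantum affine algebra $U_v(\hat{\mathfrak{sl}}_\ell)'$ acting on $\cF_N$, by invoking Lemma \ref{lem:comparetypeA}. That lemma expresses $\OPB_{\overline{p}} = \OPE_{\overline{p}}\OPK_{-\overline{p}}^{-1} + \OPF_{-\overline{p}}$ (with an extra $v^{-1}$ twist at the fixed index $\overline{\ellh}$) and $\OPL_{\overline{p}} = \OPK_{\overline{p}}\OPK_{-\overline{p}}^{-1}$, so each relation in Definition \ref{def:quantumsymmpair} becomes an algebraic identity in $U_v(\hat{\mathfrak{sl}}_\ell)'$ that can be checked formally. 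Structurally we are realising the subalgebra generated by the $\OPB$'s and $\OPL$'s as the image of a coideal subalgebra in the Letzter--Kolb sense.

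First I would dispose of the $\OPL$-relations: they are immediate from the commutativity of the $\OPK_{\overline{q}}$'s and from $\OPK_{\overline{p}}\OPK_{-\overline{p}}^{-1}\cdot\OPK_{-\overline{p}}\OPK_{\overline{p}}^{-1} = 1$. For the $\OPL$-$\OPB$ scaling relations, the strategy is to transport $\OPL_{\overline{q}}$ past each Chevalley summand of $\OPB_{\overline{p}}$ using $\OPK_{\overline{q}}\OPE_{\overline{p}} = v^{a_{\overline{q}\overline{p}}}\OPE_{\overline{p}}\OPK_{\overline{q}}$ and its $\OPF$-analogue. The total $v$-power picked up is $a_{\overline{q}\overline{p}} - a_{-\overline{q}\overline{p}}$, yielding $\pm 2$ for $\overline{p} = \pm\overline{q}$, $\pm 1$ when $\overline{p}$ is linked to $\overline{q}$ or to $\Theta(\overline{q})$, and $0$ otherwise. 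A short case analysis recovers the case list in Definition \ref{def:quantumsymmpair}; the enhanced exponents $\pm 3$ at a $\Theta$-linked $\overline{r}$ come from both halves of $\OPB_{\overline{r}}$ contributing simultaneously, while $\OPL_{\overline{q}}\OPB_{\overline{s}} = \OPB_{\overline{s}}\OPL_{\overline{q}}$ at the fixed index reflects a $\Theta$-symmetry of the two contributions.

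The principal work lies in the $\OPB$-$\OPB$ relations. For the commutator $[\OPB_{\overline{p}}, \OPB_{\Theta(\overline{p})}]$ with $\overline{p}$ standard, I would expand the bracket of $\OPE_{\overline{p}}\OPK_{-\overline{p}}^{-1} + \OPF_{-\overline{p}}$ and $\OPE_{-\overline{p}}\OPK_{\overline{p}}^{-1} + \OPF_{\overline{p}}$. Since $\overline{p}$ and $-\overline{p}$ are non-adjacent when $\overline{p}$ is standard, the $\OPE$-$\OPE$ and $\OPF$-$\OPF$ cross terms vanish, and the two mixed terms combine, via the Drinfeld--Jimbo commutator $[\OPE_{\overline{p}}, \OPF_{\overline{p}}] = (\OPK_{\overline{p}} - \OPK_{\overline{p}}^{-1})/(v-v^{-1})$ and its $\Theta$-image, into exactly $(\OPL_{\overline{p}} - \OPL_{\Theta(\overline{p})})/(v - v^{-1})$ after reorganising $\OPK$-twists. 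Commutativity of $\OPB_{\overline{p}}$ and $\OPB_{\overline{q}}$ in all other unrelated cases follows from termwise commutativity in $U_v(\hat{\mathfrak{sl}}_\ell)'$.

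The hard part is the deformed quantum Serre relation $\mathrm{SR}_v(\OPB_{\overline{p}}, \OPB_{\overline{q}})$ at linked indices. Each such relation expands into sixteen monomials in the Chevalley generators; the pure $\OPE$- and $\OPF$-pieces vanish immediately by Serre in $U_v(\hat{\mathfrak{sl}}_\ell)'$, but the mixed pieces require combining commutator relations with the $\OPK$-twists so that the $[2]_v$ coefficients fall in the right places. The main technical obstacles are the two delicate subcases: the $\Theta$-linked situation $\overline{q} = \Theta(\overline{p})$, where the non-cancelling residue has to be reorganised into the prescribed form $-[2]_v\OPB_{\overline{p}}(v\OPL_{\overline{p}} + v^{-2}\OPL_{\Theta(\overline{p})})$; and the fixed-index Serre relation, where the $v^{-1}$ twist built into the definition of $\OPB_{\overline{\ellh}}$ is essential for producing the stated right-hand side $\OPB_{\overline{q}}$ on the nose. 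These match the non-affine computations in \cite{ES} recalled in Remark \ref{rem:changeingenerators}; the verification is combinatorially dense but conceptually routine once the coideal picture is in place.
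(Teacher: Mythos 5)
Your proposal follows essentially the same route as the paper's proof: both reduce everything to the quantum affine algebra via Lemma \ref{lem:comparetypeA}, dispose of the $\OPL$-relations and the commutator/commutativity relations by direct computation, and handle the (deformed) quantum Serre relations by observing that they are local to the four indices $\overline{p},\overline{q},\Theta(\overline{p}),\Theta(\overline{q})$ and therefore coincide with the non-affine computations of \cite{ES}/\cite{Letzter2} since $\ell>3$. The only cosmetic difference is that the paper argues the generic commutativity of $\OPB_{\overline{p}}$ and $\OPB_{\overline{q}}$ directly from the moving-operator definition (non-neighbouring positions, or products that vanish identically), whereas you argue it termwise inside $U_v(\hat{\mathfrak{sl}}_\ell)'$; both work.
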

\begin{proof}
Since we did not specify how to make all the choices in \cite{Kolb}, we give a short sketch of how to quickly check that the relations are indeed satisfied.

The relations between the $\OPL_{\overline{p}}$ are immediate by definition. They all multiply a basis element with a fixed scalar and the scalars for $\OPL_{\overline{p}}$ and $\OPL_{-\overline{p}}$ are inverse to each other.

The commuting relations between $\OPL_{\overline{q}}$ and $\OPB_{\overline{p}}$ are a straight-forward calculation using Lemma \ref{lem:comparetypeA} above.

That $\OPB_{\overline{p}}$ and $\OPB_{\overline{q}}$ commute unless for the specified index choices is also immediate from the definition. Either the summands of $\OPB_{\overline{p}}$ and $\OPB_{\overline{q}}$ modify a basis vector at positions that are not neighboured, i.e. the cosets are not linked and so the summands commute. Or, in case that $\Theta(\overline{p})$ and $\overline{q}$ are linked, their summands can modify the same position by moving a $1$ into different directions or moving two $1$'s into the same position. In this case the product of the summands is just always zero, hence they commute.

The commutator relation between $\OPB_{\overline{p}}$ and $\OPB_{\Theta(\overline{p})}$ is a direct and simple computation using Lemma \ref{lem:comparetypeA}.

Finally the deformed and non-deformed Serre relations follow from \cite{ES} (or precisely their use of \cite{Letzter2}), since for a linked pair of indices ${\overline{p}}$ and ${\overline{q}}$ the Serre relation are independent of the rest of the Dynkin diagram, the calculation only involves the relation between the standard generators of the quantized enveloping algebra for the indices ${\overline{p}}$, ${\overline{q}}$, ${\Theta(\overline{p})}$, and ${\Theta(\overline{q})}$, which are local and hence the same in the affine case and in \cite{ES}, since $\ell > 3$.
\end{proof}

Thus putting everything together we obtain the statement.

\begin{theorem} \label{thm:actionanddecompC}
There exists an action of $B_v(\bH/\ell\bZ,\Theta)$ on $\cF_N$ such that for $\lambda \in X^+$ the decomposition of $[\Delta_{\mathrm{q}}(\lambda) \otimes \Delta_{\mathrm{q}}(\omega_1)]$ in $[\Uqmod]$ with respect to the classes of Weyl modules is obtained from
\[
\sum_{\overline{p}\in \bH/\ell\bZ} \OPB_{\overline{p}}\underline{\lambda}
\]
by projecting onto the subspace $\cF(C_N)$ and evaluating $v=1$. Furthermore if $[\Delta_{\mathrm{q}}(\mu)]$ and $[\Delta_{\mathrm{q}}(\nu)]$ appear in the decomposition with $\mu$ and $\nu$ linked, then there exists a unique $\overline{p}\in \bH/\ell\bZ$ such that $\underline{\mu}$ and $\underline{\nu}$ appear in $\OPB_{\overline{p}}\underline{\lambda}$.
\end{theorem}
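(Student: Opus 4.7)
The existence of the action of $B_v(\bH/\ell\bZ,\Theta)$ on $\cF_N$ is already given by Proposition \ref{prop:operator_relations_C}; what remains is to verify the two assertions about $\sum_{\overline{p}\in\bH/\ell\bZ}\OPB_{\overline{p}}\underline{\lambda}$.

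For the identification with the tensor product decomposition, I would first observe that as $\overline{p}$ ranges over $\bH/\ell\bZ$ the cosets partition $\bH$, so each operator $\re_j$ appears exactly once in the sum (in the summand $\OPB_{\overline{j}}$) and each $\rf_j$ appears exactly once (in the summand $\OPB_{-\overline{j}}$, noting that $-\overline{\ellh}=\overline{\ellh}$ when $\ell$ is odd). Specializing $v=1$ collapses every prefactor of the form $v^{T_\ell^{\pm}}$, $v^{R_\ell^{\pm}}$, $v^{L_\ell^{\pm}}$ as well as the extra $v^{-1}$ in $\OPB_{\overline{\ellh}}$ to $1$, so the whole sum reduces to $\sum_{j\in\bH}(\re_j\underline{\lambda}+\rf_j\underline{\lambda})$. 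Lemma \ref{lem:nearlystable} then identifies the non-zero summands that lie in $\cF(C_N)$ as precisely the sequences $\underline{\lambda-\varepsilon_i}$ for $\lambda-\varepsilon_i\in X^+$ and $\underline{\lambda+\varepsilon_i}$ for $\lambda+\varepsilon_i\in X^+$; the only summand leaving $\cF(C_N)$ is $\rf_{\oneh}\underline{\lambda}$, and since $\underline{\lambda}_i>0$ for every $i$ this contribution has no counterpart on the tensor product side. Combining this with Proposition \ref{prop:tensorproductC}, transported to $[\Uqmod]$ via Proposition \ref{prop:tensorproductdecomp}, yields the first assertion.

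For the uniqueness of $\overline{p}$, the plan is to use that the moving operator producing $\underline{\mu}$ from $\underline{\lambda}$ is itself unique: $\re$-type moves strictly decrease the total weight while $\rf$-type moves strictly increase it, and within each type the modified position determines the index. Hence $\underline{\mu}$ (and separately $\underline{\nu}$) picks out a unique $\re_r$ or $\rf_s$, which in turn pins down the unique coset $\overline{p}$ that can contain that operator inside some $\OPB_{\overline{p}}$. It then remains only to match: Lemma \ref{lem:linkedEorFs} handles two linked moves of the same type and forces $r\equiv s\pmod\ell$, placing them in a common operator; Lemma \ref{lem:linkedEFdifferentstart} handles mixed linked moves with $r+1\neq s$ and forces $r\equiv -s\pmod\ell$, so $\re_r\in\OPB_{\overline{r}}$ and $\rf_s\in\OPB_{-\overline{s}}=\OPB_{\overline{r}}$; finally the ``same-start'' boundary case is handled by Lemmas \ref{lem:linkedEFsamestartodd} and \ref{lem:linkedEFsamestarteven}, which place the two relevant indices into a $\Theta$-linked pair of cosets (or, for $\ell$ odd, into the fixed coset $\overline{\ellh}$), again yielding a common $\OPB_{\overline{p}}$. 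The step I expect to require the most care is this last matching in the mixed case near the $\Theta$-linked or fixed parts of the Dynkin diagram, particularly for $\ell$ even where two $\Theta$-linked pairs appear and the linkage lemma produces several candidate hyperplanes; but since the final answer depends only on the cosets of $r$ and $s$, it should reduce to purely combinatorial bookkeeping once the lemmas are applied.
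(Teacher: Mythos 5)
Your proposal is correct and takes essentially the same route as the paper's proof, which is itself just an assembly of Proposition \ref{prop:operator_relations_C} for the action, Propositions \ref{prop:tensorproductC} and \ref{prop:tensorproductdecomp} with Lemma \ref{lem:nearlystable} for the decomposition, and Lemmas \ref{lem:linkedEorFs}, \ref{lem:linkedEFdifferentstart}, \ref{lem:linkedEFsamestartodd}, \ref{lem:linkedEFsamestarteven} for uniqueness; you in fact supply more of the bookkeeping than the paper does. One small misattribution that does not affect the conclusion: for $\ell$ odd the same-start case (Lemma \ref{lem:linkedEFsamestartodd}) forces $r\in\oneh+\ell\bZ$, so both moving operators land in the common operator $\OPB_{\overline{-\oneh}}$ attached to a $\Theta$-linked index, not in the fixed coset $\overline{\ellh}$, which instead occurs only in the generic mixed case of Lemma \ref{lem:linkedEFdifferentstart} when $r\equiv -s\equiv\ellh$.
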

\begin{proof}
The action is the one from Proposition \ref{prop:operator_relations_C}. That the decomposition is given by the sum of all operators is Proposition \ref{prop:tensorproductC} together with Proposition \ref{prop:tensorproductdecomp} to obtain the translation of the decomposition into weight combinatorics and then Lemma \ref{lem:nearlystable} together with the definition of the operators themselves.

That classes of two Weyl modules with linked weights are obtained from a unique operator $\OPB_{\overline{p}}$ is then Lemmas \ref{lem:linkedEorFs}, \ref{lem:linkedEFdifferentstart}, \ref{lem:linkedEFsamestarteven}, and \ref{lem:linkedEFsamestartodd}, .
\end{proof}

We want to address now the question of embedding into a single Fock space for different ranks.

In type $A$, every $\cF(A_N)$ can be embedded into $\cF_0$. This was natural on a combinatorial level due to the choice of $\mathfrak{gl}$ instead of $\mathfrak{sl}$ and hence the ability to choose the element $\rho$. The choice was made such that the weight $0$ always gets mapped to the sequence with all $1$'s in the non-positive half and $0$'s in the positive half. From the point of view of the affine operators and the action, any shift of the origin, i.e. an embedding into a different $\cF_k$ makes no difference, since the operators only involves entries that are congruent mod $\ell$. Hence an embedding into a different $\cF_k$ is just related to a shift in the used operators. On the Lie theory side this would just correspond to a different choice of $\rho$.

We can make a similar construction in type $C$ if we restrict to certain $N$. Although this is more artificial since on the Lie theory side, the element $\rho$ is fixed. 

\begin{definition} \label{def:shiftsequences}
Let $N = m \ell + k$ for $m \geq 0$ and $0 \leq k < \ell$. Then we define a map from $\cF_N$ to $\cF_k$ via $\underline{a}^{(m)}(i) = \underline{a}(i+m\ell)$ for $\underline{a} \in \mathcal{S}_{\bZ,N}$. We call $\underline{a}^{(m)}$ the sequence shifted by $m$ $\ell$-steps.

In this way we can view $\cF(C_{N})$ as a subspace of $\cF_k$, which we call the shifted embedding of $\cF(C_{N})$.
\end{definition}

That the map in Definition \ref{def:shiftsequences} is well-defined follows immediately with a simple calculation for the charge. Since it intertwines the action of $\re_i$ and $\re_{i-m\ell}$ (and similar of $\rf_i$ and $\rf_{i-m\ell}$), the affine operators $\OPB_{\overline{p}}$ and $\OPL_{\overline{p}}$ defined on $\cF_N$ and $\cF_k$ as restrictions from $\cF$ commute with the map of shifting a sequence by $m$ steps. In terms of weights this is the same as looking at the weight $\lambda + \rho - m\ell(1,\ldots,1)$ which for most $m$ is not dominant.

Thus using this embedding for a fixed $0 \leq k < \ell$ we can formulate this as follows.

\begin{proposition} \label{prop:operatorsasdecomp}
Let $\underline{a} \in \mathcal{S}_{\bZ,k}$ and $r \geq 0$. Then for $m \gg 0$, $\underline{a}=\underline{\lambda}^{(m)}$ for $\lambda$ a dominant weight for $U_{\mathrm{q}}$ of type $C_{m\ell+k}$. Furthermore there are Laurent polynomials $d_{\lambda,\mu}(v)$ with non-negative integer coefficients for $\mu$ dominant for $U_q$ such that 
\[
\sum_{(\bH/\ell\bZ)^r} \OPB_{\overline{p}_1} \cdots \OPB_{\overline{p}_r}\underline{\lambda}^{(m)} = \sum_ {\mu \text{ dominant for }U_q} d_{\lambda,\mu}(v)\underline{\mu}^{(m)},
\]
and
\[
[\Delta_{\mathrm{q}}(\lambda) \otimes \Delta_{\mathrm{q}}(\omega_1)^{\otimes r}] = \sum_ {\mu \text{ dominant for }U_q} d_{\lambda,\mu}(1)[\Delta_{\mathrm{q}}(\mu)].
\]
\end{proposition}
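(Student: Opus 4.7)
The plan is to reduce the claim to an $r$-fold iteration of Theorem \ref{thm:actionanddecompC} by taking $m$ large enough that no boundary effect interferes with the action. First, given $\underline{a}\in\mathcal{S}_{\bZ,k}$, I would choose $M\geq 0$ with $\underline{a}(i)=1$ for all $i\leq -M$. For any $m$ with $m\ell\geq M$ the formula $\underline{\lambda}(j):=\underline{a}(j-m\ell)$ yields a sequence in $\mathcal{S}_{\bZ,m\ell+k}$ satisfying $\underline{\lambda}(j)=1$ for all $j\leq 0$ and $\underline{a}=\underline{\lambda}^{(m)}$; listing the positive positions with value $1$ in strictly decreasing order recovers a dominant weight $\lambda\in X^+$ for $U_{\mathrm{q}}$ of type $C_{m\ell+k}$, which takes care of the existence claim.

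The next observation, which I would verify directly from Definition \ref{def:quantumsymoperatorsC}, is that every operator $\OPB_{\overline{p}}$ commutes with shifts by multiples of $\ell$: the indexing cosets in $\bH/\ell\bZ$, the moving operators $\re_j,\rf_j$, and all the counting statistics in Definition \ref{def:countingstats} depend only on positions modulo $\ell$. Consequently, evaluating $\OPB_{\overline{p}_1}\cdots\OPB_{\overline{p}_r}$ on $\underline{\lambda}^{(m)}$ in $\cF_k$ agrees, after the shift, with evaluating it on $\underline{\lambda}$ in $\cF_{m\ell+k}$, which reduces the problem to iterating the action inside $\cF(C_{m\ell+k})$.

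The main technical point, and the principal obstacle, is a stability argument: for $m$ sufficiently large the $r$-fold application never leaves $\cF(C_{m\ell+k})$, so Theorem \ref{thm:actionanddecompC} applies at each step without any projection. I expect this from the following blocking argument. For $m\ell\geq M+r$ the sequence $\underline{\lambda}$ satisfies $\underline{\lambda}(i)=1$ at all $i\leq m\ell-M$; by Lemma \ref{lem:nearlystable} the only moving operator that can push a sequence of $\cF(C_{m\ell+k})$ out of this subspace is $\rf_{\oneh}$, and this operator requires position $1$ to be empty, contradicting the above fullness. An easy induction on the number of operators applied then shows that after $s\leq r$ applications the positions $\leq m\ell-M-s$ remain occupied, so in particular non-positive positions stay full and every summand produced lies in $\cF(C_{m\ell+k})$.

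Given this stability, the rest is a straightforward induction on $r$. Theorem \ref{thm:actionanddecompC} combined with Proposition \ref{prop:tensorproductdecomp} identifies $\sum_{\overline{p}\in\bH/\ell\bZ}\OPB_{\overline{p}}$ with the operation $-\otimes\Delta_{\mathrm{q}}(\omega_1)$ on $[\Uqmod]$ after setting $v=1$, and iterating gives the claimed decomposition of $[\Delta_{\mathrm{q}}(\lambda)\otimes\Delta_{\mathrm{q}}(\omega_1)^{\otimes r}]$. Since each coefficient appearing in each $\OPB_{\overline{p}}$ is a positive integer multiple of a monomial $v^n$, the $d_{\lambda,\mu}(v)$ are Laurent polynomials with non-negative integer coefficients, as required.
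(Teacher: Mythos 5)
Your proposal is correct and follows essentially the same route as the paper: identify $\underline{a}$ with a shifted dominant weight, choose $m$ with $m\ell$ exceeding (the position of the leftmost gap)$+\,r$ so that the block of occupied non-positive positions can erode by at most one step per operator and the iteration never leaves $\cF(C_{m\ell+k})$, and then iterate Theorem \ref{thm:actionanddecompC}. Your direct read-off of $\lambda$ from the occupied positions and the explicit induction on the occupied block are just cleaner presentations of the paper's reduce-to-vacuum-and-reverse argument and its remark that only the $1$'s at positions $-n'-1,\ldots,-n'-r$ can move right.
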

\begin{proof}
For $\underline{a}$ we consider the left most $0$ in the non-positive part of the sequence. If this zero is at position $-n^\prime$ then using operators of the form $\re_i$ this can be reduced to a sequence $\underline{b}$ that has no $0$'s in the non-positive part and exactly $k$ $1$'s at the positions $1,\ldots,k$. Then $\underline{b}$ is the image of $\underline{0}$ under the shifted embedding of $\cF(C_{m^\prime\ell+k})$ for $m^\prime \ell > n^\prime$. Then by applying operators $\rf_i$ in the reverse order to the sequence of operators $\re_i$ before one obtains a dominant weight $\lambda$ with $\underline{a}=\underline{\lambda}^{(m^\prime)}$. If there was no $0$ in the non-positive part then one can just use $m^\prime=0$.

A dominant weight $\lambda$ for $U_{\mathrm{q}}$ of rank $m^\prime\ell+k$ can be viewed as a dominant weight for $U_{\mathrm{q}}$ of rank $m\ell+k$ for $m>m^\prime$, by just filling it up with $0$ at the last $(m-m^\prime)\ell$ entries. Choose $m>m^\prime$ such that $m\ell > n^\prime + r$. By our discussion about which $\rf_i$ can leave the embedded subspace $\cF_N$, after Lemma \ref{lem:nearlystable}, we see that no product $\OPB_{\overline{p}_1} \cdots \OPB_{\overline{p}_r}$ applied to $\underline{a}$ can create a weight that is not in the shifted embedding of $\cF(C_{m\ell + k})$, since at most the $1$'s at the position $-n^\prime-1,\ldots, -n^\prime-r$ can be moved to the right. Hence the statement then follows from Theorem \ref{thm:actionanddecompC}.
\end{proof}

Note that the proof of Proposition \ref{prop:operator_relations_B} gives a clear bound for what $m$ needs to be, it is just necessary that $m\ell > n^\prime + r$ for $n^\prime$ the position of the left most entry $0$.

\begin{remark}
Note that the definition of the operators $\OPB_{\overline{p}}$ is not uniquely determined. As already mentioned in Remark \ref{rem:changeingenerators}, one can change some coefficients in the definition of the generators and obtain a slightly modified algebra. 

Since the definition in type $A$ is not unique as well, compare for example the definition in \cite[Theorem 3.1]{RT} and in \cite[Section 10.1]{Ariki}, one can make similar modifications in type $C$.
\end{remark}

\section{Type B} \label{sec:typeB}

In type $B_N$ ($N>1$), we choose $\varepsilon_i \in \fh^\ast$ the projection onto the $i$-th diagonal entries for the Cartan subalgebra of diagonal matrices inside $\mathfrak{so}_{2n+1}(\bC)$. The $W$-invariant bilinear form $(-,-)$ is given such that the $\varepsilon_i$ form an orthonormal basis. For the positive roots we choose 
\[
\Phi^+ = \{ \beta_{i,j}^\pm=\varepsilon_i\pm\varepsilon_j \mid 1 \leq i < j \leq N \} \cup \{ \beta_{i}=\varepsilon_i \mid 1 \leq i \leq N \}
\]
and similar to type $C$, the simple roots are $\alpha_i=\beta_{i,i+1}^-$ for $1\leq i < N$, but here $\alpha_N=\beta_N$. The corresponding coroots are
\[
(\beta_{i,j}^\pm)^\vee = \beta_{i,j}^\pm \text{ and } \beta_i^\vee = 2\varepsilon_i.
\]
Following Section \ref{sec:quantumgroup} we have $d_i=2$ for $1 \leq i < N$ and $d_N=1$, since $\alpha_N$ is the only simple short root in this case. This gets extended to all positive roots via $d_\beta=2$ for $\beta=\beta_i$ for some $i$ and $d_\beta=1$ otherwise.

The noticeable change is in the integral weight lattice. The fundamental weights are $\omega_i = \varepsilon_1 + \ldots + \varepsilon_i$ for $i < N$ and $\omega_N=\oneh(\varepsilon_1+\ldots+\varepsilon_N)$. Hence for the integral weights $X=\bigoplus_{i+1}^N \bZ \omega_i$, the dominant weights $X^+$ that can be naturally divided into two subsets
\begin{align*}
X^{\oneh,+} &= \{ (\lambda_1,\ldots,\lambda_N) \mid \lambda_i \in \bZ, \lambda_i \geq \lambda_{i+1}, \text{ and } \lambda_N \geq 0\} \text{ and}\\
X^{1,+} &= \{ (\lambda_1,\ldots,\lambda_N) \mid \lambda_i \in \bH, \lambda_i \geq \lambda_{i+1}, \text{ and } \lambda_N \geq 0\},
\end{align*}
written as row vectors with respect to $\{\varepsilon_i\}_{1\leq i\leq N}$ inside $\fh^\ast$. We call $X^{\oneh,+}$ the integer weights, not to be confused with the integral weights, and $X^{1,+}$ the half-integer weights.

Summing up the fundamental weights we get 
\[
\rho=\sum_{i=1}^N \omega_i = (N-\oneh)\varepsilon_1+(N-\threeh)\varepsilon_2 +\ldots + \oneh\varepsilon_N \in X^{\oneh,+}.
\]
Thus adding $\rho$ we obtain the sets to define sequences, namely
\begin{align*}
X^{\oneh,+}_\rho &= \{ (\underline{\lambda}_1,\ldots,\underline{\lambda}_N) \mid \underline{\lambda}_i \in \bH,\, \underline{\lambda}_i > \underline{\lambda}_{i+1}, \text{ and } \underline{\lambda}_N > 0\},\\
X^{1,+}_\rho &= \{ (\underline{\lambda}_1,\ldots,\underline{\lambda}_N) \mid \underline{\lambda}_i \in \bZ,\, \underline{\lambda}_i > \underline{\lambda}_{i+1}, \text{ and } \underline{\lambda}_N > 0\},
\end{align*}
again recalling that $\underline{\lambda}=\lambda + \rho$ for $\lambda \in X^+$.

\begin{remark}
This makes clear our convention of naming the set of integer weights $X^{\oneh,+}$ and the set of half-integer weights $X^{1,+}$ (and not the other way around). After adding $\rho$, elements in $X^{\oneh,+}_\rho$ have entries in $\bH$ and elements of $X^{1,+}_\rho$ have entries in $\bZ$. Thus the naming convention makes it easy to recognise what the domains for the sequences in the different cases are.
\end{remark}

Again the parity of $\ell$ is important. For $\ell$ odd, $W_\ell$ is the affine Weyl group of type $B_N$, scaled by the factor $\ell$. In case $\ell$ even, $W_\ell$ is the affine Weyl group of type $C_N$ scaled by a factor $\ell$ for the dual root system, i.e. where one would replace $\beta_{ij}^\pm$ in the root system by $\oneh\beta_{ij}^\pm$.

For the action on $\Uqmod$ we use the functor $-\otimes_\bC \Delta_{\mathrm{q}}(\omega_1)$, again the tensor product with the the specialization of the quantum analogue of the natural representation.

\begin{remark}
Note that in contrast to the case of $U_v(\fg)$ of type $A$ or $C$, $\Delta_v(\omega_1)$ is not a tensor generator of $\Uvmod$, hence there are other possible finite dimensional representations, like the specialization of the spin representation $\Delta_{\mathrm{q}}(\omega_N)$, that can give interesting actions on $\Uqmod$.
\end{remark}

In type $B$ the natural representation is not minuscule. Hence the tensor product decomposition has a slight complication.

\begin{proposition}{\cite[Proposition 8.6.3]{HongKang}} \label{prop:tensorproductB}
For $\lambda \in X^{+}$ it holds in $\Uvmod$
\[
\Delta_v(\lambda) \otimes \Delta_v(\omega_1) \cong \bigoplus_{i : \lambda + \varepsilon_i \in X^+} \Delta_v(\lambda + \varepsilon_i) \oplus \bigoplus_{i : \lambda - \varepsilon_i \in X^+} \Delta_v(\lambda - \varepsilon_i) \oplus \Delta_v(\lambda)^{\oplus \delta_{\mathrm{pos}}},
\]
where $\delta_{\mathrm{pos}}=0$ if $\lambda_N=0$ and $\delta_{\mathrm{pos}}=1$ if $\lambda_N > 0$.
\end{proposition}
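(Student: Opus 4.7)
The plan is to apply Klimyk's formula (the standard tensor-product decomposition algorithm), which is particularly efficient when one factor has an easily enumerated weight set. Since $\Uvmod$ is semisimple, it suffices to determine the multiplicity of each Weyl module in the tensor product, and the natural representation $\Delta_v(\omega_1)$ in type $B_N$ has weight set $\{\pm\varepsilon_i \mid 1 \leq i \leq N\} \cup \{0\}$ with each weight of multiplicity one. Klimyk's formula then gives
\begin{align*}
[\Delta_v(\lambda) \otimes \Delta_v(\omega_1)] = \sum_{\xi} [\Delta_v(\lambda + \xi)]^{*},
\end{align*}
where $\xi$ ranges over the weights of $\Delta_v(\omega_1)$ and $[\Delta_v(\mu)]^{*}$ denotes the \emph{straightening}: zero if $\mu + \rho$ is fixed by some reflection $s_\alpha$ with $\alpha \in \Phi^+$, and otherwise $(-1)^{\ell(w)} [\Delta_v(w(\mu+\rho)-\rho)]$ for the unique $w \in W$ making $w(\mu+\rho)$ strictly dominant.

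The bulk of the argument is a short case analysis over $\xi$, using the explicit coroots listed at the start of this section. For $\xi = \varepsilon_i$, a pairing $(\lambda+\varepsilon_i+\rho,\alpha^\vee)$ can only vanish for $\alpha = \beta_{i-1,i}^-$ (all other positive roots are forced into strict positivity by $\rho$), and this vanishing is exactly $\lambda_{i-1}=\lambda_i$, which is precisely the condition for $\lambda+\varepsilon_i \notin X^+$; the contribution is therefore $\Delta_v(\lambda+\varepsilon_i)$ when dominant and $0$ otherwise. An analogous check for $\xi = -\varepsilon_i$ with $i < N$ isolates $\beta_{i,i+1}^-$ and yields $\Delta_v(\lambda-\varepsilon_i)$ or $0$ in the same manner. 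The zero weight $\xi = 0$ contributes $+[\Delta_v(\lambda)]$ unconditionally, since $\lambda+\rho$ is automatically strictly dominant.

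The genuinely interesting case is $\xi = -\varepsilon_N$, where one pairs with the short coroot $\beta_N^\vee = 2\varepsilon_N$ to obtain $(\lambda-\varepsilon_N+\rho,\beta_N^\vee) = 2\lambda_N - 1$. This is positive for $\lambda_N \geq 1$ (giving the expected summand $\Delta_v(\lambda-\varepsilon_N)$), zero for $\lambda_N = \nicefrac{1}{2}$ (so $\mu+\rho$ lies on the wall and the contribution is $0$), and negative for $\lambda_N = 0$, in which case a short computation shows $s_{\beta_N}(\lambda-\varepsilon_N+\rho)-\rho = \lambda$ and the contribution is $-[\Delta_v(\lambda)]$. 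Thus the universal $+[\Delta_v(\lambda)]$ arising from $\xi = 0$ is cancelled by the $-[\Delta_v(\lambda)]$ from $\xi = -\varepsilon_N$ precisely when $\lambda_N = 0$, which reproduces the factor $\delta_{\mathrm{pos}}$ in the statement. The main obstacle I anticipate is simply organising the wall-check book-keeping across the $2N+1$ values of $\xi$; the only genuinely nontrivial Weyl-group reflection is the one appearing at the end, and every other case reduces to comparing an inner product with zero.
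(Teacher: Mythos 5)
Your argument is correct, but it is worth noting that the paper does not prove this statement at all: it is quoted verbatim from Hong--Kang, Proposition 8.6.3, where it is obtained from the combinatorics of crystal bases for the classical Lie algebras. Your route via Klimyk's formula is therefore a genuinely different (and self-contained) derivation. The case analysis checks out: for $\xi=\varepsilon_i$ the only coroot whose pairing with $\lambda+\varepsilon_i+\rho$ can vanish is $(\beta_{i-1,i}^-)^\vee$, and that vanishing is equivalent to $\lambda_{i-1}=\lambda_i$, i.e.\ to non-dominance of $\lambda+\varepsilon_i$; for $\xi=-\varepsilon_i$ with $i<N$ the analogous statement holds for $\beta_{i,i+1}^-$ (and all entries of $\mu+\rho$ stay positive, so no wall for $\beta_j$ or $\beta_{j,k}^+$ is hit); and for $\xi=-\varepsilon_N$ the pairing $(\lambda-\varepsilon_N+\rho,\beta_N^\vee)=2\lambda_N-1$ correctly trichotomizes into the dominant case, the wall case $\lambda_N=\nicefrac{1}{2}$, and the case $\lambda_N=0$ where $s_{\beta_N}$ sends $\lambda-\varepsilon_N+\rho$ to the regular dominant point $\lambda+\rho$ with sign $-1$, cancelling the unconditional $+[\Delta_v(\lambda)]$ from $\xi=0$. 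What your approach buys is transparency about exactly where the $\delta_{\mathrm{pos}}$ correction comes from (a single alcove reflection at the short-root wall), at the cost of invoking the Weyl/Klimyk character identity rather than the purely combinatorial crystal argument of the cited source; since $\mathrm{ch}(\Delta_v(\lambda))$ is given by Weyl's character formula and $\Uvmod$ is semisimple, that identity applies verbatim in the quantum setting, so there is no gap.
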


Thus in type $B$ the tensor product decomposition for weights in $X^{1,+}$ is straight-forward with the only difference to type $C$ being the appearance of a summand $\Delta_v(\lambda)$ itself. Even the sequences are looking very similar to the type $C$ case.

For weights in $X^{\oneh,+}$ on the other hand, the tensor product decomposition rule depends on the explicit weight of the Weyl module, namely the summand $\Delta_v(\lambda)$ only appears if $\lambda_N>0$. In addition we are dealing with sequences on half-integers in this case.

\subsection{Fock space of sequences and operators} \label{sec:fockspaceB}
We decompose the Fock space as
\[
\cF(B_N) = \cF^1(B_N) \oplus \cF^{\oneh}(B_N),
\]
with $\cF^1(B_N)$ having basis $\underline{\lambda}\in X^{1,+}_\rho$ and $\cF^{\oneh}(B_N)$ having basis $\underline{\lambda}\in X^{\oneh,+}_\rho$. For $\cF^1(B_N)$ we can use the same definition of sequences and Fock spaces as in type $C$, and embed $\cF^1(B_N)$ into $\cF_N$ as in Section \ref{sec:fockspaceC}. 

For the case of $\cF^{\oneh}(B_N)$ we use the sequences $\cS_{\bH}$ and corresponding Fock space $\cF^{\oneh}$ from Section \ref{sec:fockspace}. Hence we map $\underline{\lambda}\in X^{\oneh,+}_\rho$ to the sequence $\underline{a}_{\underline{\lambda}}$ with
\[
\underline{a}_{\underline{\lambda}}(i) = \begin{cases}
1 & \text{if } i < 0, \\
1 & \text{if there exists } 1 \leq j \leq N \text{ s.t. } i=\underline{\lambda}_j, \\
0 & \text{otherwise}
\end{cases}
\]
to embed $\cF^{\oneh}(B_N)$ into $\cF^{\oneh}$. As before it is clear that the image is contained in the subspace $\cF^{\oneh}_N$ of sequences of charge $N$. We continue to write $\underline{\lambda}$ for the sequence as well.

The analogue of Lemma \ref{lem:nearlystable} holds in type $B$ as well, i.e. $\re_i$ preserves the subpsaces $\cF^1(B_N)$ and $\cF^{\oneh}(B_N)$, $\rf_i$ does so for $i \notin \{0,\oneh\}$, and $\rf_i$ does not preserve the subpsaces for $i \in \{0,\oneh\}$.

In contrast to type $C$, the cases of $\ell$ odd and even have to be treated separately.

\subsection{Linkage and operators for odd $\ell$}

Throughout this section we assume that 
\[
\ell > 3 \text{ is odd.}
\]
The proofs in this section have a very similar flavour to the ones in Section \ref{sec:linkagetypeC}. Unfortunately there are some subtle differences. We start with the statements that work for both the integer and the half-integer weight cases.

\begin{lemma} \label{lem:linkedEorFtypeBodd}
Let $\underline{\lambda} \in X^{+}_\rho$ and $\ell$ odd. Assume that $\underline{\mu}=\re_r\underline{\lambda}$ and $\underline{\nu}=\re_s\underline{\lambda}$ are defined for $r \neq s$. Then $\mu$ and $\nu$ are linked if and only if $r \in s + \ell \bZ$.

Assume that $\underline{\mu^\prime}=\rf_r\underline{\lambda}$ and $\underline{\nu^\prime}=\rf_s\underline{\lambda}$ are defined for $r \neq s$. Then $\mu^\prime$ and $\nu^\prime$ are linked if and only if $r \in s + \ell \bZ$.
\end{lemma}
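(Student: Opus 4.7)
The plan is to mirror the proof of Lemma \ref{lem:linkedEorFs}, with the caveat that in type $B$ the long coroot $\beta_k^\vee=2\varepsilon_k$ introduces additional affine hyperplanes that are absent in type $C$. Setting $\underline{\mu}=\underline{\lambda}-\varepsilon_i$ and $\underline{\nu}=\underline{\lambda}-\varepsilon_j$ with $\underline{\lambda}_i=r+\oneh$, $\underline{\lambda}_j=s+\oneh$ and $i<j$, the ``if'' direction is the one-line computation $(\underline{\nu},(\beta_{i,j}^-)^\vee)=\underline{\lambda}_i-\underline{\lambda}_j+1=1+m\ell$, so that $s_{\beta_{i,j}^-,m}(\underline{\nu})=\underline{\mu}$ whenever $\underline{\lambda}_i-\underline{\lambda}_j=m\ell$. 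The statement for $\rf_r,\rf_s$ is proved identically using $\underline{\mu}=\underline{\lambda}+\varepsilon_i$, $\underline{\nu}=\underline{\lambda}+\varepsilon_j$, so I focus on the $\re$ case.

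For the ``only if'' direction, I would apply Lemma \ref{lem:goodminimalgallery} to pick alcoves $A_{\underline{\mu}},A_{\underline{\nu}}$ together with a minimal gallery crossing only hyperplanes strictly between them, and then compute $(\underline{\mu}-\underline{\nu},\gamma^\vee)$ with $\underline{\mu}-\underline{\nu}=\varepsilon_j-\varepsilon_i$ for every $\gamma\in\Phi^+$. The key new feature relative to type $C$ is that the long coroots give $(\underline{\mu}-\underline{\nu},\beta_i^\vee)=-2$ and $(\underline{\mu}-\underline{\nu},\beta_j^\vee)=2$, while all other positive roots give a value in $\{0,\pm 1\}$. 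Hence the hyperplanes strictly between $A_{\underline{\mu}}$ and $A_{\underline{\nu}}$ are among $H_{\beta_{i,j}^-,m_1}$ with $m_1\ell=\underline{\lambda}_i-\underline{\lambda}_j$, $H_{\beta_i,m_2}$ with $m_2\ell=2\underline{\lambda}_i-1$, and $H_{\beta_j,m_3}$ with $m_3\ell=2\underline{\lambda}_j-1$, subject to the respective divisibility conditions; since $\underline{\mu}\neq\underline{\nu}$ at least one of the three must hold.

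If $\underline{\lambda}_i-\underline{\lambda}_j\in\ell\bZ$ we are done. The main obstacle is the remaining case, where the only strictly-between hyperplanes come from $\beta_i$ and/or $\beta_j$: the trick of Lemma \ref{lem:linkedEFsamestartodd} fails because $\underline{\lambda}$ is not the midpoint of $\underline{\mu}$ and $\underline{\nu}$, so the reflections involved do not fix $\underline{\lambda}$. I would instead invoke the decomposition $W_\ell=W\ltimes\ell\bZ\Phi$, valid for type $B$ with $\ell$ odd since then $\ell_\alpha=\ell$ for every $\alpha$ and $\bZ\Phi=\bigoplus_i\bZ\varepsilon_i$: the linkage of $\mu$ and $\nu$ amounts to the existence of a signed permutation $w\in W$ with $w(\underline{\mu})\equiv\underline{\nu}\pmod{\ell\bZ\Phi}$, which in turn forces the multisets of $\pm$-residues of the coordinates modulo $\ell$ to agree. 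Cancelling the $N-2$ common entries $\underline{\lambda}_k$ for $k\neq i,j$, the problem reduces to matching $\{\underline{\lambda}_j,\underline{\lambda}_i-1\}$ with $\{\underline{\lambda}_i,\underline{\lambda}_j-1\}$ up to sign modulo $\ell$; a short case-check on the two possible matchings gives either $\underline{\lambda}_i\equiv\underline{\lambda}_j\pmod\ell$ directly, or $2\underline{\lambda}_i\equiv 2\underline{\lambda}_j\equiv 1\pmod\ell$, and the odd parity of $\ell$, via $\mathrm{gcd}(2,\ell)=1$, turns the latter into $\underline{\lambda}_i\equiv\underline{\lambda}_j\pmod\ell$ as well, completing the $\Rightarrow$ direction.
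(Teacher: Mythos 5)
Your proof is correct, and its skeleton (reduction to $\underline{\mu}=\underline{\lambda}-\varepsilon_i$, $\underline{\nu}=\underline{\lambda}-\varepsilon_j$, the good choice of alcoves via Lemma \ref{lem:goodminimalgallery}, and the observation that only $\beta_{i,j}^-$, $\beta_i$, $\beta_j$ can contribute strictly-between hyperplanes) coincides with the paper's. Where you genuinely diverge is the residual case in which no $\beta_{i,j}^-$-hyperplane separates the alcoves. The paper stays inside the gallery picture: since the crossed reflections must carry $\underline{\mu}$ to $\underline{\nu}$, which differ in both the $i$-th and the $j$-th coordinate, a $\beta_i$-wall and a $\beta_j$-wall must both be crossed, which yields $2(\underline{\lambda}_i-\underline{\lambda}_j)\in\ell\bZ$ and hence the claim because $\ell$ is odd. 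You instead leave the alcove framework and use $W_\ell=W\ltimes\ell\bZ\Phi$ (valid here precisely because $\ell_\alpha=\ell$ for every $\alpha$ when $\ell$ is odd, and $\bZ\Phi=\bigoplus_i\bZ\varepsilon_i$ in type $B$) to turn linkage into equality of the multisets of coordinates modulo $\ell$ up to sign; the multiset cancellation of the $N-2$ common entries is legitimate, and your two-matching case check correctly reduces the bad branch to $2\underline{\lambda}_i\equiv 2\underline{\lambda}_j\equiv 1\pmod{\ell}$, after which the same parity argument finishes. Your route buys some robustness: it avoids the paper's slightly delicate claim that both a $\beta_i$- and a $\beta_j$-wall must occur among the crossed hyperplanes, and in fact the residue argument alone would give the entire ``only if'' direction with no gallery at all. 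What it costs is generality: it is tied to all $\ell_\alpha$ being equal, whereas the gallery method is the one that carries over to the even-$\ell$ statements later in the section. A minor remark in your favour: your separation condition $m_2\ell=2\underline{\lambda}_i-1$ is the correct one (the value of $(\cdot,\beta_i^\vee)$ jumps from $2\underline{\lambda}_i-2$ at $\underline{\mu}$ to $2\underline{\lambda}_i$ at $\underline{\nu}$); the paper's displayed $2\underline{\lambda}_i=m'\ell$ is off by one, though this does not affect its conclusion $2(\underline{\lambda}_i-\underline{\lambda}_j)\in\ell\bZ$.
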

\begin{proof}
As in Lemma \ref{lem:linkedEorFs}, $\underline{\mu}=\underline{\lambda}-\varepsilon_i$ with $\underline{\lambda}_i=r+\oneh$ and $\underline{\nu}=\underline{\lambda}-\varepsilon_j$ with $\underline{\lambda}_j=s+\oneh$ and we assume that $i<j$.

Assume now that $\underline{\mu}$ and $\underline{\nu}$ are linked. Use Lemma \ref{lem:goodminimalgallery} to obtain alcoves $A_{\underline{\mu}}$ and $A_{\underline{\nu}}$ with a minimal gallery only crossing walls strictly between the alcoves. The absolute value of $(\underline{\nu}-\underline{\mu},\beta^\vee)$ can be $2$ in case of $\beta \in \{\beta_{ij}^-,\beta_i,\beta_j\}$. In case that a hyperplane of the form $H_{\beta_{ij}^-,m}$ is strictly between the alcoves, it follows as in Lemma \ref{lem:linkedEorFs} that $r \in s+\ell\bZ$.

Hence assume that there is no hyperplane of the form $H_{\beta_{ij}^-,m}$ strictly between the alcoves. Since $\underline{\nu}-\underline{\mu}=\varepsilon_i-\varepsilon_j$ there has to be both a hyperplane of the form $H_{\beta_{i},m^\prime}$ and of the form $H_{\beta_{j},m^{\prime\prime}}$ strictly between the alcoves since we assume that the weights are linked. This implies $2\underline{\lambda}_i = m^\prime \ell$ and $2\underline{\lambda}_j = m^{\prime\prime}\ell$. Hence $2(\underline{\lambda}_i-\underline{\lambda}_j)=(m^\prime-m^{\prime\prime}) \ell$. Since $\ell$ is odd $(m^\prime-m^{\prime\prime})$ has to be even and so $H_{\beta_{ij}^-,(m^\prime-m^{\prime\prime})/2}$ is strictly between the alcoves, a contradiction.

Conversely assume that $r \in s + \ell \bZ$. Then $\underline{\lambda}_i-\underline{\lambda}_j$ is a multiple of $\ell$. Hence $(\underline{\lambda},(\beta_{ij}^-)^\vee)=\underline{\lambda}_i-\underline{\lambda}_j$ is a multiple of $\ell$ and so $(\underline{\nu},(\beta_{ij}^-)^\vee) = 1 + m\ell$ for some $m\in\bZ$. Thus $s_{\beta_{ij}^-,m}(\underline{\nu})=\underline{\mu}$ and so the weights are linked.

The analogous statement for $\rf_i$ and $\rf_j$ holds with the analogous proof.
\end{proof}

For the case of mixed operators we get accordingly.

\begin{lemma} \label{lem:linkedEFtypedifferentBodd}
Let $\underline{\lambda} \in X^{+}_\rho$ and $\ell$ odd. Assume that $\underline{\mu}=\re_r\underline{\lambda}$ and $\underline{\nu}=\rf_s\underline{\lambda}$ are defined for $r+1 \neq s$. Then $\mu$ and $\nu$ are linked if and only if $r \in -s + \ell \bZ$.
\end{lemma}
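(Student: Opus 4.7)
The plan is to mirror the structure of Lemmas \ref{lem:linkedEFdifferentstart} and \ref{lem:linkedEFsamestartodd}: first translate the hypothesis into a condition on the $i$-th and $j$-th coordinates of $\underline{\lambda}$, where $i$ is characterized by $\underline{\lambda}_i = r + \oneh$ and $j$ by $\underline{\lambda}_j = s - \oneh$, and then use Lemma \ref{lem:goodminimalgallery} to control which affine hyperplanes can lie strictly between well-chosen alcoves $A_{\underline{\mu}}$ and $A_{\underline{\nu}}$. Since $r + 1 \neq s$ we get $i \neq j$, and $r \in -s + \ell\bZ$ unwinds precisely to $\underline{\lambda}_i + \underline{\lambda}_j \in \ell\bZ$.

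The easy direction is almost immediate: assuming $\underline{\lambda}_i + \underline{\lambda}_j = m\ell$, a direct computation shows $s_{\beta_{i,j}^+,m}\underline{\nu} = \underline{\mu}$, using $\ell_{\beta_{i,j}^+} = \ell$ for $\ell$ odd. So $\mu$ and $\nu$ are linked.

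For the converse I would tabulate $(\underline{\nu} - \underline{\mu}, \beta^\vee) = (\varepsilon_i + \varepsilon_j, \beta^\vee)$ over all positive roots $\beta$. The only roots for which this absolute value exceeds $1$ are $\beta_{i,j}^+$ and the two short roots $\beta_i, \beta_j$, where it equals $2$. Roots contributing $\pm 1$ cannot produce a wall strictly between $A_{\underline{\mu}}$ and $A_{\underline{\nu}}$, since then $(\underline{\mu}, \beta^\vee)$ and $(\underline{\nu}, \beta^\vee)$ would be consecutive integers and $\ell_\beta = \ell > 3$ leaves no multiple of $\ell_\beta$ strictly between them. Hence the walls crossed by a minimal gallery come only from these three roots. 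If a $\beta_{i,j}^+$-wall is crossed, the strictly-between condition immediately forces $\underline{\lambda}_i + \underline{\lambda}_j \in \ell\bZ$. Otherwise, I would argue that both a $\beta_i$-wall and a $\beta_j$-wall must be crossed: a single $\beta_i$-reflection sends $\underline{\mu}$ to $\underline{\lambda}$ rather than to $\underline{\nu}$ and similarly for $\beta_j$, while parallel walls are crossed at most once in a minimal gallery. This yields the simultaneous equations $2\underline{\lambda}_i = m'\ell + 1$ and $2\underline{\lambda}_j = m''\ell - 1$.

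The remaining step, extracting $\underline{\lambda}_i + \underline{\lambda}_j \in \ell\bZ$ from $2(\underline{\lambda}_i + \underline{\lambda}_j) = (m' + m'')\ell$, is where type $B$ genuinely differs from type $C$ and is the main obstacle. It requires a parity split on whether $\underline{\lambda} \in X^{\oneh,+}_\rho$ or $\underline{\lambda} \in X^{1,+}_\rho$. In the first case $\underline{\lambda}_i, \underline{\lambda}_j \in \bH$, so $2\underline{\lambda}_i$ is odd and, since $\ell$ is odd, both $m'$ and $m''$ must be even. In the second case $\underline{\lambda}_i, \underline{\lambda}_j \in \bZ$, so $2\underline{\lambda}_i$ is even and both $m', m''$ must be odd. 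In either case $m' + m''$ is even, so dividing by $2$ yields $\underline{\lambda}_i + \underline{\lambda}_j \in \ell\bZ$, as desired.
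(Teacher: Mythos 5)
Your proof is correct and follows essentially the same route as the paper, which simply defers to the argument of Lemma \ref{lem:linkedEorFtypeBodd} with the roots $\{\beta_{ij}^+,\beta_i,\beta_j\}$ in place of $\{\beta_{ij}^-,\beta_i,\beta_j\}$: same use of Lemma \ref{lem:goodminimalgallery}, same identification of the only roots pairing to $\pm 2$ with $\underline{\nu}-\underline{\mu}$, and the same parity argument exploiting that $\ell$ is odd. Your bookkeeping of the $\pm 1$ offsets in $2\underline{\lambda}_i=m'\ell+1$, $2\underline{\lambda}_j=m''\ell-1$ is in fact slightly more careful than the paper's sketch (and the case split on $X^{1,+}_\rho$ versus $X^{\oneh,+}_\rho$ is valid though not strictly needed, since $\underline{\lambda}_i+\underline{\lambda}_j$ is an integer in both cases).
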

\begin{proof}
The proof is nearly identical to Lemma \ref{lem:linkedEorFtypeBodd}. Fix $i \neq j$ via $\underline{\mu}=\underline{\lambda}-\varepsilon_i$ and $\underline{\nu}=\underline{\lambda}+\varepsilon_j$. Then the roots that give absolute values strictly bigger than $1$ for $(\underline{\nu}-\underline{\mu},\beta^\vee)=(\varepsilon_i+\varepsilon_j,\beta^\vee)$ are $\beta \in \{\beta_{ij}^+,\beta_i,\beta_j\}$. 

Assume that the weights are linked. Fixing the alcoves as usual and the corresponding minimal gallery, we see that if $H_{\beta_{ij}^+,m}$ is strictly between the alcoves for some $m$ then the statement follows immediately. If one assumes that such a hyperplane does not exist, the contradiction is obtained as in the proof of Lemma \ref{lem:linkedEorFtypeBodd}.

If it holds $r \in -s + \ell \bZ$, then $(\underline{\lambda},(\beta_{ij}^+)^\vee)=\underline{\lambda}_i+\underline{\lambda}_j$ is a multiple of $\ell$ and so $(\underline{\nu},(\beta_{ij}^-)^\vee) = 1 + m\ell$ for some $m\in\bZ$. Thus $s_{\beta_{ij}^+,m}(\underline{\nu})=\underline{\mu}$ and so the weights are linked.
\end{proof}

For the analogue of Lemmas \ref{lem:linkedEFsamestartodd} and \ref{lem:linkedEFsamestarteven} we get the following.

\begin{lemma} \label{lem:linkedEFtypesameBodd}
Let $\ell$ odd. Assume $\underline{\lambda} \in X^{1,+}_\rho$ and that $\underline{\mu}=\re_{r-1}\underline{\lambda}$ and $\underline{\nu}=\rf_{r}\underline{\lambda}$ are defined. Then $\mu$ and $\nu$ are linked if and only if $r \in \oneh + \ell \bZ$.

Assume $\underline{\lambda} \in X^{\oneh,+}_\rho$ and that $\underline{\mu}=\re_{r-1}\underline{\lambda}$ and $\underline{\nu}=\rf_{r}\underline{\lambda}$ are defined. Then $\mu$ and $\nu$ are linked if and only if $r \in \ellph + \ell \bZ$.
\end{lemma}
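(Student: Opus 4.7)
In both parts, let $i$ be the unique index with $\underline{\lambda}_i = r - \oneh$, so that $\underline{\mu} = \underline{\lambda} - \varepsilon_i$ and $\underline{\nu} = \underline{\lambda} + \varepsilon_i$; hence $\underline{\nu} - \underline{\mu} = 2\varepsilon_i$. The plan is to first establish the intermediate equivalence
\[
\mu, \nu \text{ linked} \iff 2\underline{\lambda}_i \in \ell\bZ,
\]
and then extract the two residue classes on $r$ by a parity argument using that $\ell$ is odd.

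For the ``if'' direction, $2\underline{\lambda}_i = m\ell$ gives $s_{\beta_i, m}(\underline{\nu}) = \underline{\nu} - ((\underline{\nu}, \beta_i^\vee) - m\ell)\beta_i = \underline{\nu} - 2\varepsilon_i = \underline{\mu}$, using $\beta_i = \varepsilon_i$, $\beta_i^\vee = 2\varepsilon_i$, and $\ell_{\beta_i} = \ell$ (odd $\ell$). For the ``only if'' direction, I would follow the minimal gallery strategy from the proof of Lemma \ref{lem:linkedEFsamestartodd}, choosing alcoves via Lemma \ref{lem:goodminimalgallery}. In type $B$ the coroots pairing non-trivially with $2\varepsilon_i$ are $\beta_i$ (pairing $\pm 4$) and $\beta_{i,j}^\pm, \beta_{j,i}^\pm$ for $j \neq i$ (pairing $\pm 2$). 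For a first crossing $\gamma_1 \neq \beta_i$, the argument is verbatim the analogue of the $\gamma_1 = \beta_{i,j}^-$ case of Lemma \ref{lem:linkedEFsamestartodd}: the first reflection produces an intermediate weight $\underline{\lambda} \pm \varepsilon_j$, every subsequent strictly between wall of pairing $\pm 2$ fixes this weight unless it is the ``complementary'' wall of type involving both $i$ and $j$, and combining the two relations forces $2\underline{\lambda}_i = (m_1 + m_p)\ell \in \ell\bZ$. For the new case $\gamma_1 = \beta_i$ with pairing $4$, the interval $(2\underline{\lambda}_i - 2, 2\underline{\lambda}_i + 2)$ has length $4$ and contains at most one multiple of $\ell$ (since $\ell \geq 5$), so $m_1\ell \in \{2\underline{\lambda}_i - 1, 2\underline{\lambda}_i, 2\underline{\lambda}_i + 1\}$; the middle value closes the case on a gallery of length one, while for $m_1\ell = 2\underline{\lambda}_i \pm 1$ the reflection sends $\underline{\nu}$ to $\underline{\lambda}$ or $\underline{\lambda} - 2\varepsilon_i$, no further $\beta_i$-hyperplane is available, and the remaining crossings must all be of type $\beta_{i,j}^\pm$, which combine via the same two-wall argument to yield $2\underline{\lambda}_i \in \ell\bZ$.

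Given the intermediate equivalence, the parity step is elementary. In the first assertion, $\underline{\lambda}_i \in \bZ$ combined with $\gcd(\ell, 2) = 1$ gives $\underline{\lambda}_i \in \ell\bZ$, so $r = \underline{\lambda}_i + \oneh \in \oneh + \ell\bZ$. In the second, $\underline{\lambda}_i \in \bH$ makes $2\underline{\lambda}_i$ an odd integer, so $2\underline{\lambda}_i \in \ell\bZ$ forces $2\underline{\lambda}_i = (2k+1)\ell$ for some $k \in \bZ$ and hence $\underline{\lambda}_i = k\ell + \nicefrac{\ell}{2}$, giving $r = \underline{\lambda}_i + \oneh = k\ell + \ellph \in \ellph + \ell\bZ$.

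The main obstacle compared with Lemma \ref{lem:linkedEFsamestartodd} is the pairing-$4$ case for $\gamma_1 = \beta_i$, which has no counterpart in type $C$ where all relevant pairings were $\pm 2$ and the midpoint argument closed the case immediately. A cleaner alternative bypassing the gallery analysis altogether is to identify $W_\ell$-orbits on $X$ using the multiset invariant $\{\pm \underline{x}_j \bmod \ell\}_{j=1}^N$ in $(\bR/\ell\bZ)/\{\pm\}$, available because for $\ell$ odd in type $B$ one has $W_\ell = W \ltimes \ell Q$ with $Q = \bigoplus_j \bZ\varepsilon_j$; since $\underline{\mu}$ and $\underline{\nu}$ differ only in the $i$-th entry, linkage reduces at once to $\underline{\lambda}_i - 1 \equiv \pm(\underline{\lambda}_i + 1) \pmod \ell$, and the ``$+$'' sign forces $\ell \mid 2$ which is excluded by $\ell > 3$, leaving $2\underline{\lambda}_i \in \ell\bZ$.
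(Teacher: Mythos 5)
Your main argument is essentially the paper's: the same reduction to $\underline{\nu}-\underline{\mu}=2\varepsilon_i$, the same minimal-gallery analysis through strictly-between hyperplanes via Lemma \ref{lem:goodminimalgallery}, the same two-wall relation $2\underline{\lambda}_i=(m_1+m_p)\ell$ when the first crossing is of type $\beta_{ij}^{\pm}$, and the same parity extraction at the end (which is correct as written). The one place where your write-up is imprecise is the subcase $\gamma_1=\beta_i$ with $m_1\ell=2\underline{\lambda}_i\pm 1$: the ``two-wall argument'' does not apply there as stated, because nothing forces a complementary pair $\beta_{ij}^{-},\beta_{ij}^{+}$ to both be crossed. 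The correct resolution is that this subcase is vacuous. Every remaining strictly-between hyperplane is of type $\beta_{ij}^{\pm}$ and necessarily contains $\underline{\lambda}$ (its pairing with $\underline{\mu}$ and $\underline{\nu}$ differs by $2$, so the unique admissible level is $(\underline{\lambda},\gamma^\vee)$); hence if the intermediate weight is $\underline{\lambda}$ it is fixed by all remaining reflections, and if it is $\underline{\lambda}-2\varepsilon_i$ the whole remaining trajectory stays in $\underline{\lambda}+2\bZ^N$, while $\underline{\mu}=\underline{\lambda}-\varepsilon_i$ lies in neither situation's reachable set. Either way the gallery cannot terminate at $A_{\underline{\mu}}$, a contradiction. (The paper sidesteps this by only treating $\gamma_1=\beta_i$ when the gallery has length one and absorbing later $\beta_i$-crossings into its $k\in\{1,2,3\}$ case analysis; your interval-counting observation that at most one $\beta_i$-level fits, since $\ell\geq 5$, is a nice simplification, but the subcase still needs the vacuity argument above rather than the two-wall relation.)

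Your closing alternative is a genuinely different and cleaner route that the paper does not take: for $\ell$ odd the translation lattice of $W_\ell$ on $X_\rho$ is $\ell\bZ\Phi=\ell\bZ^N$ in type $B_N$, so the multiset $\{\pm\underline{x}_j \bmod \ell\}_j$ in $(\bR/\ell\bZ)/\{\pm 1\}$ is a complete orbit invariant, and multiset cancellation reduces linkage of $\underline{\mu}$ and $\underline{\nu}$ immediately to $\underline{\lambda}_i-1\equiv\pm(\underline{\lambda}_i+1)\pmod{\ell}$, whence $2\underline{\lambda}_i\in\ell\bZ$. This is correct, bypasses all gallery combinatorics, and would in fact also give uniform one-line proofs of Lemmas \ref{lem:linkedEorFtypeBodd}, \ref{lem:linkedEFtypedifferentBodd}, \ref{lem:linkedEIdtypeBodd1}, and \ref{lem:linkedEIdtypeBodd2}. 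What the gallery approach buys is uniformity across the even-$\ell$ cases, where $\ell_\alpha$ depends on the root and the translation lattice is no longer simply $\ell$ times the full root lattice, so the orbit invariant is less immediate.
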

\begin{proof}
Fix $i$ via $\underline{\mu}=\underline{\lambda}-\varepsilon_i$ respectively $\underline{\nu}=\underline{\lambda}+\varepsilon_i$, i.e. with $\underline{\lambda}_i=r-\oneh$. 

Assume that $\mu$ and $\nu$ are linked. Again use Lemma \ref{lem:goodminimalgallery} to obtain alcoves $A_{\underline{\mu}}$ and $A_{\underline{\nu}}$ with a minimal gallery only crossing walls strictly between the alcoves. The possible positive roots such that the absolute value $|(\underline{\nu}-\underline{\mu},\beta^\vee)|>1$ are $\beta=\beta_i$ for the value $4$ and for the value $2$ it is $\beta =\beta_{ij}^\pm$ for $i<j$ or $\beta =\beta_{ji}^\pm$ for $j<i$ .

Let $A_0,\ldots,A_t$ be the alcoves in the minimal gallery connecting $A_{\underline{\mu}}$ and $A_{\underline{\nu}}$ and $H_1,\ldots,H_t$ the crossed hyperplanes in order, all strictly between the alcoves by Lemma \ref{lem:goodminimalgallery}. Fix positive roots and integers such that $H_p=H_{\gamma_p,m_p}$. Recall that for all these it holds $|(\underline{\nu}-\underline{\mu},\gamma_p^\vee)|>1$.

\textbf{Case: $\gamma_1=\beta_i$ and $r=1$:} Then $(\underline{\nu},\beta_i^\vee) = 2 +m_1\ell$ since we need to have $s_{\beta_i,m_1}(\underline{\nu})=\underline{\mu}$. Thus $2\underline{\lambda}_i+2=2+m_1\ell$. In case that $\underline{\lambda} \in X^{1,+}_\rho$, it holds that $m_1$ must be even (since $\ell$ is odd) and so $\underline{\lambda}_i \in \ell\bZ$ or equivalently $r \in \oneh + \ell \bZ$. If on the other hand $\underline{\lambda} \in X^{\oneh,+}_\rho$, then $m_1$ must be odd and we obtain $\underline{\lambda}_i \in \ellh +\ell\bZ$ or equivalently $r \in \ellph + \ell \bZ$.

\textbf{Case: $\gamma_1=\beta_{ij}^-$ for some $j$:} Then it holds that $(\underline{\nu},(\beta_{ij}^-)^\vee) = 1 +m_1\ell$, $s_{\beta_{ij}^-,m_1}(\underline{\nu})=\underline{\lambda} + \varepsilon_j \neq \underline{\mu}$, and $\underline{\lambda}_i - \underline{\lambda}_j = m_1\ell$. Now for $q>1$ it holds $(\underline{\lambda} + \varepsilon_j,\gamma_q^\vee) \in \ell\bZ$ and thus $s_{\gamma_q,m_q}(\underline{\lambda} + \varepsilon_j)=\underline{\lambda} + \varepsilon_j$ unless $\gamma_q \in \{ \beta_{i},\beta_{ij}^+\}$. Hence either $\beta_{i}$ or $\beta_{ij}^+$ has to appear as a $\gamma_q$ for $q>1$. Let $p>1$ be the minimal such that $\gamma_p \in \{ \beta_{i},\beta_{ij}^+\}$.

If $\gamma_p=\beta_{ij}^+$ then
\[
(\underline{\lambda}+\varepsilon_j,(\beta_{ij}^+)^\vee)=(\underline{\nu},(\beta_{ij}^+)^\vee) = 1 + m_p \ell.
\]
Hence $\underline{\lambda}_i + \underline{\lambda}_j = m_p \ell$ and so $2\underline{\lambda}_i = (m_1+m_p)\ell$. As above, for $\underline{\lambda} \in X^{1,+}_\rho$, it follows that $r \in \oneh + \ell \bZ$ and for $\underline{\lambda} \in X^{\oneh,+}_\rho$ it follows that $r \in \ellph + \ell \bZ$. Furthermore, we see that $\gamma_q\neq \beta_i$ for all $q$ since we already have $s_{\beta_{ij}^+,m^\prime}(\underline{\lambda} + \varepsilon_j) = \underline{\mu}$ in this case.

If $\gamma_p=\beta_{i}$ then
\[ 
(\underline{\lambda}+\varepsilon_j,(\beta_{i})^\vee)= (\underline{\nu},(\beta_{i})^\vee)-2.
\]
But $H_{\beta_{i},m_p}$ is assumed to be strictly between the alcoves, thus $(\underline{\nu},(\beta_{i})^\vee)=k + m_p\ell$ for $k \in \{1,2,3\}$ ($k=4$ is not possible since $\underline{\mu}$ would lie on $H_{\beta_{i},m_p}$ in that case). Then $(\underline{\lambda}+\varepsilon_j,(\beta_{i})^\vee) = k-2 + m_p\ell$. 
\begin{enumerate}
\item If $k=1$, then $H_{\beta_{i},m_p}$ was already crossed, which cannot be the case. 
\item If $k=2$ then $\underline{\lambda}+\varepsilon_j$ is on $H_{\beta_{i},m_p}$ and invariant under $s_{\beta_{i},m_p}$, hence $\beta_{ij}^+$ has to appear in the sequence of $\gamma$'s that follow and one argues as above. 
\item If $k=3$, it holds $s_{\beta_i,m_p}(\underline{\lambda}+\varepsilon_j) = \underline{\lambda}+\varepsilon_j-\varepsilon_i$. But $(\underline{\lambda}+\varepsilon_j-\varepsilon_i,\gamma_q^\vee)=(\underline{\mu},\gamma_q^\vee)$ for $q>p$ except for $\gamma_q=\beta_{ij}^+$ and for $\beta_{ij}^+$ it holds $(\underline{\lambda}+\varepsilon_j-\varepsilon_i,(\beta_{ij}^+)^\vee)=(\underline{\lambda},(\beta_{ij}^+)^\vee)$. Hence $\underline{\lambda}+\varepsilon_j-\varepsilon_i$ is already in the same half-space as $\underline{\mu}$ for all $H_q$ with $q>p$ and $\gamma_q \neq \beta_{ij}^+$ and it is on the hyperplane $H_q$ for $q>p$ and $\gamma_q = \beta_{ij}^+$. Thus the only hyperplane left to cross is $H_{p+1}$ and it must hold that $\gamma_{p+1} = \beta_{ij}^+$, but $s_{\beta_{ij}^+,m_{p+1}}(\underline{\lambda}+\varepsilon_j-\varepsilon_i)=\underline{\lambda}+\varepsilon_j-\varepsilon_i \neq \underline{\mu}$, which is a contradiction to the construction of the gallery.
\end{enumerate}

\textbf{Remaining cases:} The remaining cases for $\gamma_1$ are all done in the same way as the previous case with slight modifications in the signs that appear, but the same general arguments.

Conversely for $\underline{\lambda} \in X^{1,+}_\rho$ and $r \in \oneh +\ell\bZ$, $\underline{\lambda}_i = m\ell$ for some $m$. Hence $(\underline{\lambda}+\varepsilon_i,\beta_i^\vee) = 2\underline{\lambda}_i+2=2+2m\ell$. Thus $s_{\beta_i,2m}(\underline{\nu})=\underline{\mu}$ and so $\nu$ and $\mu$ are linked.

For $\underline{\lambda} \in X^{\oneh,+}_\rho$ and $r \in \ellph +\ell\bZ$, $\underline{\lambda}_i = \ellh + m\ell$. Hence $(\underline{\lambda}+\varepsilon_i,\beta_i^\vee) = 2\underline{\lambda}_i+2=2+(2m+1)\ell$. Thus $s_{\beta_i,2m+1}(\underline{\nu})=\underline{\mu}$ and so $\nu$ and $\mu$ are linked.
\end{proof}

In contrast to type $C$, $\lambda$ can be linked to a weight $\mu$ obtained by applying an operator.

\begin{lemma} \label{lem:linkedEIdtypeBodd1}
Let $\underline{\lambda} \in X^{1,+}_\rho$ and $\ell$ is odd. Assume that $\underline{\mu}=\re_r\underline{\lambda}$ is defined. Then $\mu$ and $\lambda$ are linked if and only if $r \in \ellh +\ell\bZ$.

Assume that $\underline{\nu}=\rf_r\underline{\lambda}$ is defined. Then $\nu$ and $\lambda$ are linked if and only if $r \in \ellh +\ell\bZ$.
\end{lemma}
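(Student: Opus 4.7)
My plan is to exploit the fact that the weight difference $\underline{\lambda} - \underline{\mu}$ is the single weight vector $\varepsilon_i$ (with $i$ determined by $\underline{\lambda}_i = r + \oneh$), so the values $(\underline{\lambda} - \underline{\mu}, \beta^\vee) = (\varepsilon_i, \beta^\vee)$ take only a very restricted range over $\beta \in \Phi^+$: the value is $2$ precisely for $\beta = \beta_i$, it is $\pm 1$ for $\beta \in \{\beta_{i,j}^\pm, \beta_{j,i}^\pm \mid j \neq i\}$, and $0$ in all other cases. This drastically cuts down the possible hyperplanes strictly between $A_{\underline{\lambda}}$ and $A_{\underline{\mu}}$ compared to the previous lemmas.

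First I would reduce to analysing a single minimal gallery. Assuming $\lambda$ and $\mu$ are linked, invoke Lemma \ref{lem:goodminimalgallery} to fix alcoves $A_{\underline{\lambda}}$ and $A_{\underline{\mu}}$ so that every hyperplane crossed by a minimal gallery is strictly between them. A hyperplane $H_{\gamma,m}$ is strictly between the alcoves only if $m\ell_\gamma$ lies strictly between $(\underline{\lambda},\gamma^\vee)$ and $(\underline{\mu},\gamma^\vee)$. Roots $\beta$ giving value $0$ are immediately excluded. The key observation is that roots giving value $\pm 1$ are also excluded: since $\underline{\lambda}\in X^{1,+}_\rho$ has integer entries, for $\gamma=\beta_{i,j}^\pm$ the two inner products $(\underline{\lambda},\gamma^\vee)$ and $(\underline{\mu},\gamma^\vee)$ are consecutive integers, and no integer multiple of $\ell$ can lie strictly between two consecutive integers. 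Therefore the only hyperplanes that can appear in the gallery are of the form $H_{\beta_i,m}$.

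Next, counting such hyperplanes. Here $(\underline{\lambda},\beta_i^\vee) = 2\underline{\lambda}_i$ and $(\underline{\mu},\beta_i^\vee) = 2\underline{\lambda}_i - 2$, so strict separation forces $m\ell = 2\underline{\lambda}_i - 1$. Since $\ell > 3$, the open interval $(2\underline{\lambda}_i - 2, 2\underline{\lambda}_i)$ contains at most one integer multiple of $\ell$, so the minimal gallery has length at most $1$; as $\underline{\lambda}\neq\underline{\mu}$, it has length exactly $1$ and $\underline{\mu} = s_{\beta_i,m}(\underline{\lambda})$. The equation $m\ell = 2\underline{\lambda}_i - 1$ has an integer solution iff $2\underline{\lambda}_i - 1 \in \ell\bZ$; as $\ell$ is odd, the only way this can happen is with $m$ odd, say $m = 2k+1$, giving $\underline{\lambda}_i = \ellph + k\ell$ and hence $r = \underline{\lambda}_i - \oneh \in \ellh + \ell\bZ$. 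Conversely, given $r\in\ellh+\ell\bZ$ one directly checks $s_{\beta_i,2k+1}(\underline{\lambda}) = \underline{\lambda} - \varepsilon_i = \underline{\mu}$, so the weights are linked. The $\rf_r$ statement is handled by the same argument verbatim, with $\underline{\nu} = \underline{\lambda} + \varepsilon_i$ and $\underline{\lambda}_i = r - \oneh$; the condition $m\ell = 2r$ with $m$ odd yields the same $r \in \ellh + \ell\bZ$.

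The step I expect to be the only substantive one is the exclusion of hyperplanes $H_{\beta_{i,j}^\pm,m}$, which is the new ingredient compared with the proofs of Lemmas \ref{lem:linkedEorFtypeBodd}--\ref{lem:linkedEFtypesameBodd}: there one had to track cascades of reflections because multiple roots gave values $\geq 2$, whereas here the rigidity of integer-valued separations does all the work at once. It is also worth highlighting that the argument genuinely uses $\underline{\lambda}\in X^{1,+}_\rho$ (integer entries); for $X^{\oneh,+}_\rho$ the analogous exclusion fails, which is why the companion case $\underline{\lambda}\in X^{\oneh,+}_\rho$ is treated separately later.
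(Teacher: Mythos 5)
Your proof is correct and follows essentially the same route as the paper's: both reduce to the observation that $\beta_i$ is the only positive root with $|(\underline{\lambda}-\underline{\mu},\beta^\vee)|\geq 2$, force $2\underline{\lambda}_i=1+m\ell$ from the unique separating hyperplane $H_{\beta_i,m}$, and use the parity of $\ell$ to get $r\in\ellh+\ell\bZ$, with the converse by exhibiting the reflection $s_{\beta_i,m}$. The only cosmetic differences are that you spell out the exclusion of the $\beta_{i,j}^\pm$ hyperplanes and the length-one gallery explicitly (the paper leaves this implicit, as in the preceding lemmas) and you redo the $\rf_r$ case directly, whereas the paper deduces it from the $\re_r$ case via $\re_r\underline{\nu}=\underline{\lambda}$.
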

\begin{proof}
Fix $i$ via $\underline{\mu}=\underline{\lambda}-\varepsilon_i$ with $\underline{\lambda}_i=r+\oneh$. 

Assume now that $\mu$ and $\lambda$ are linked. The absolute value of $(\underline{\lambda}-\underline{\mu},\beta^\vee)$ can only be $2$ for $\beta =\beta_i$. Hence there exists a hyperplane $H_{\beta_i,m}$ strictly between and so $(\underline{\lambda},\beta_i^\vee)=2\underline{\lambda}_i=1 + m\ell$. Since $2\underline{\lambda}_i$ is an even integer and $\ell$ is odd, $m$ is odd as well. So we get $r = \ellh + \frac{m-1}{2}\ell \in \ellh +\ell\bZ$.

If $r \in \ellh +\ell\bZ$ then $\underline{\lambda}_i= \ellph + m\ell$ for some $m\in\bZ$. Hence $(\underline{\lambda},\beta_i^\vee) = 1 + (2m+1)\ell$ and so $\lambda$ and $\mu$ are linked.

The case of $\underline{\nu}$ follows, considering that in this case $\re_r\underline{\nu}=\underline{\lambda}$.
\end{proof}

The proof for the next Lemma is completely analogous to Lemma \ref{lem:linkedEIdtypeBodd1}.

\begin{lemma} \label{lem:linkedEIdtypeBodd2}
Let $\underline{\lambda} \in X^{\oneh,+}_\rho$ and $\ell$ is odd. Assume that $\underline{\mu}=\re_r\underline{\lambda}$ is defined. Then $\mu$ and $\lambda$ are linked if and only if $r \in \ell\bZ$.

Assume that $\underline{\nu}=\rf_r\underline{\lambda}$ is defined. Then $\nu$ and $\lambda$ are linked if and only if $r \in \ell\bZ$.
\end{lemma}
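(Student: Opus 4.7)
The plan is to mimic the proof of Lemma \ref{lem:linkedEIdtypeBodd1} almost verbatim, with the single arithmetic change coming from the fact that weights in $X^{\oneh,+}_\rho$ have half-integer coordinates (so $2\underline{\lambda}_i$ is an \emph{odd} integer), whereas weights in $X^{1,+}_\rho$ have integer coordinates (so $2\underline{\lambda}_i$ is even). This one parity swap flips the residue class modulo $\ell$ on which $r$ must lie.

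First I would set up the same geometric picture: fix $i$ via $\underline{\lambda}_i = r + \oneh$, so that $\underline{\mu} = \underline{\lambda}-\varepsilon_i$. Applying Lemma \ref{lem:goodminimalgallery}, choose alcoves $A_{\underline{\lambda}}$ and $A_{\underline{\mu}}$ so that a minimal gallery between them crosses only hyperplanes strictly between the alcoves. Since $\underline{\lambda}-\underline{\mu} = \varepsilon_i$, the only positive root $\beta$ for which $|(\underline{\lambda}-\underline{\mu},\beta^\vee)|>1$ is $\beta = \beta_i$, which gives value $2$. Hence if $\lambda$ and $\mu$ are linked at all (and are distinct), the minimal gallery has length exactly $1$ and the single crossed hyperplane has the form $H_{\beta_i,m}$, yielding the equation $2\underline{\lambda}_i = 1 + m\ell$.

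The key arithmetic step is now the parity analysis. Since $\underline{\lambda} \in X^{\oneh,+}_\rho$, the coordinate $\underline{\lambda}_i$ lies in $\bH$, so $2\underline{\lambda}_i$ is odd. Combined with $\ell$ odd, the equation $2\underline{\lambda}_i = 1 + m\ell$ forces $m$ to be \emph{even}, say $m = 2k$. Solving gives $\underline{\lambda}_i = \oneh + k\ell$, hence $r = \underline{\lambda}_i - \oneh \in \ell\bZ$, as claimed. For the converse, if $r = k\ell$ then $2\underline{\lambda}_i = 1 + 2k\ell$, so $s_{\beta_i,2k}(\underline{\lambda}) = \underline{\mu}$ and the weights are linked.

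For the second statement about $\rf_r$, I would simply observe, as in Lemma \ref{lem:linkedEIdtypeBodd1}, that $\underline{\nu} = \rf_r\underline{\lambda}$ being defined forces $\re_r\underline{\nu} = \underline{\lambda}$, so linkedness of $\nu$ and $\lambda$ is equivalent to linkedness in the $\re$-case applied to $\nu$, reducing to what was just proved. I do not expect any real obstacle here: the delicate multi-hyperplane bookkeeping of Lemma \ref{lem:linkedEFtypesameBodd} is absent because only one root $\beta_i$ can produce an obstruction, so the argument collapses to the single parity check described above.
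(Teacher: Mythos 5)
Your proposal is correct and takes essentially the same route as the paper, which disposes of this lemma by declaring its proof ``completely analogous'' to that of Lemma \ref{lem:linkedEIdtypeBodd1}: the only relevant root is $\beta_i$, the crossing condition gives $2\underline{\lambda}_i=1+m\ell$, and the parity flip ($2\underline{\lambda}_i$ odd for $\underline{\lambda}\in X^{\oneh,+}_\rho$, forcing $m$ even and hence $r\in\ell\bZ$) together with the reduction of the $\rf_r$ case via $\re_r\underline{\nu}=\underline{\lambda}$ is exactly the intended adaptation.
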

%
%
%

\subsection{Quantum symmetric pair action for odd $\ell$} \label{sec:quantumsympairactionBodd}
As in the previous section we are assuming throughout the section that
\[
\ell > 3 \text{ is odd.}
\]

Recall the various operators and counting statistics from Section \ref{sec:operatorsandstats}. In contrast to type $C$, we need to use operators for both types of index sets.

The definition of the linear operators is very close to the ones of type $C$ in Section \ref{sec:quantumsympairactionC}. with one difference for the elements of $\bH/\ell\bZ$ respectively $\bZ/\ell\bZ$ that are fixed by $\Theta$ as defined in Section \ref{sec:quantumsymmpair}. Note that for $\ell$ odd, $\Theta(\overline{i})=\overline{i}$ implies that $\overline{i}=\overline{\ellh}$ for $\overline{i} \in \bH/\ell\bZ$ and $\overline{i}=\overline{0}$ for $\overline{i} \in \bZ/\ell\bZ$. This corresponds to the two fixed nodes in the odd Dynkin diagrams in Section \ref{sec:quantumsymmpair}.

\begin{definition} \label{def:quantumsymoperatorsB}
Assume $\ell$ odd. For $\underline{a} \in \mathcal{S}_{\bZ}$ let $\overline{p} \in \bH/\ell\bZ$ and for $\underline{a} \in \mathcal{S}_{\bH}$ let $\overline{p} \in \bZ/\ell\bZ$. If $\Theta(\overline{p})\neq\overline{p}$ define
\begin{align*}
\OPB_{\overline{p}}\, \underline{a} &= v^{T_\ell^{e-f}(-\overline{p},\underline{a})}\sum_{j \in \overline{p}} v^{R_\ell^{e-f}(j,\underline{a})} \re_j\,\underline{a} + \sum_{j \in -\overline{p}} v^{L_\ell^{f-e}(j,\underline{a})} \rf_j\,\underline{a}.\\
\OPL_{\overline{p}} \underline{a} &= v^{T_\ell^{f-e}(\overline{p},\underline{a})}v^{T_\ell^{e-f}(-\overline{p},\underline{a})} \underline{a}.
\end{align*}
If $\Theta(\overline{p})=\overline{p}$ define
\[
\OPB_{\overline{p}} \underline{a} = v^{-1} v^{T_\ell^{e-f}(\overline{p},\underline{a})}\sum_{j \in \overline{p}} v^{R_\ell^{e-f}(j,\underline{a})} \re_j\underline{a} + \sum_{j \in \overline{p}} v^{L_\ell^{f-e}(j,\underline{a})} \rf_j\underline{a} + v^{T_\ell^{e-f}(\overline{p},\underline{a})}\underline{a}.
\]
Finally, for $\underline{a} \in \mathcal{S}_{\bH}$ and $z \in \oneh + \ell\bZ$ define
\[
\OPB_{\overline{0}}^{[z]} \underline{a} = v^{-1} v^{T_\ell^{e-f}(\overline{0},\underline{a})}\sum_{j \in \overline{0}} v^{R_\ell^{e-f}(j,\underline{a})} \re_j\underline{a} + \sum_{j \in \overline{0}} v^{L_\ell^{f-e}(j,\underline{a})} \rf_j\underline{a} + \delta_{\underline{a}(z),0} v^{T_\ell^{e-f}(\overline{0},\underline{a})}\underline{a},
\]
where $\delta_{\underline{a}(z),0}=1-\underline{a}(z)$. This defines linear operators on $\cF$ and $\cF^{\oneh}$ that restrict to $\cF_N$ and $\cF_N^{\oneh}$ for any $N$.
\end{definition}

The linear operator $\OPB_{\overline{0}}^{[z]}$ is not contained in the image of the affine quantum symmetric pair. It is needed to describe the relationship with the tensor product decomposition. Comparing this with the statements in Lemma \ref{lem:comparetypeA}, the same identifications hold except for the case $\Theta(\underline{p})=\underline{p}$.

\begin{lemma} \label{lem:comparetypeAB}
Let $\underline{a} \in \mathcal{S}_{\bZ}$ (respectively $\underline{a} \in \mathcal{S}_{\bH}$) and $\overline{p} \in \bH/\ell\bZ$ (respectively $\overline{p} \in \bZ/\ell\bZ$) with $\Theta(\underline{i})=\underline{i}$ then
\[
\OPB_{\overline{p}}\, \underline{a} = v^{-1}\OPE_{\overline{p}}\OPK_{\overline{p}}^{-1}\,\underline{a} + \OPF_{\overline{p}}\,\underline{a} + \OPK_{\overline{p}}^{-1}\underline{a}
\]
and for $\underline{a} \in \mathcal{S}_{\bH}$ and $z\in\oneh+\ell\bZ$
\[
\OPB_{\overline{p}}^{[z]} \, \underline{a} = v^{-1}\OPE_{\overline{p}}\OPK_{\overline{p}}^{-1}\,\underline{a} + \OPF_{\overline{p}}\,\underline{a} + \delta_{\underline{a}(z),0}\OPK_{\overline{p}}^{-1}\underline{a}
\]
\end{lemma}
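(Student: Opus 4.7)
The strategy is to prove both identities by direct substitution, expanding the right-hand sides using the definitions of $\OPE_{\overline{p}}$, $\OPF_{\overline{p}}$, and $\OPK_{\overline{p}}$ from Section \ref{sec:typeA} (for $\underline{a}\in\mathcal{S}_{\bZ}$) and their analogues for $\underline{a}\in\mathcal{S}_{\bH}$ obtained by replacing the index set $\bH/\ell\bZ$ with $\bZ/\ell\bZ$, and then comparing termwise with Definition \ref{def:quantumsymoperatorsB}. Since neither side modifies the support structure and each summand is of a distinct ``shape'' (moved left, moved right, or unchanged), this reduces to checking that the three coefficients match.

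First I would observe that $\OPK_{\overline{p}}^{-1}\underline{a} = v^{-T_\ell^{f-e}(\overline{p},\underline{a})}\underline{a} = v^{T_\ell^{e-f}(\overline{p},\underline{a})}\underline{a}$, which is exactly the third summand of $\OPB_{\overline{p}}\underline{a}$ in the fixed case. For the first summand, I would apply $\OPE_{\overline{p}}$ to the scaled sequence $\OPK_{\overline{p}}^{-1}\underline{a}$; since $\OPK_{\overline{p}}^{-1}$ acts by a scalar, it pulls through the sum, giving
\[
v^{-1}\OPE_{\overline{p}}\OPK_{\overline{p}}^{-1}\underline{a} = v^{-1}v^{T_\ell^{e-f}(\overline{p},\underline{a})}\sum_{j\in\overline{p}} v^{R_\ell^{e-f}(j,\underline{a})}\re_j\underline{a},
\]
matching the first summand of $\OPB_{\overline{p}}\underline{a}$ exactly. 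The second summand $\OPF_{\overline{p}}\underline{a} = \sum_{j\in\overline{p}}v^{L_\ell^{f-e}(j,\underline{a})}\rf_j\underline{a}$ coincides verbatim with the middle term in Definition \ref{def:quantumsymoperatorsB}, completing the first identity.

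For the second identity involving $\OPB_{\overline{p}}^{[z]}$, the same computation applies without change to the first two summands, and the third summand picks up the additional factor $\delta_{\underline{a}(z),0}$ on the identity piece. Since $\delta_{\underline{a}(z),0} = 1-\underline{a}(z)$ is a scalar depending only on the basis vector $\underline{a}$ and the fixed half-integer $z$, it commutes with the scalar action of $\OPK_{\overline{p}}^{-1}$, so the term $\delta_{\underline{a}(z),0}\OPK_{\overline{p}}^{-1}\underline{a}$ equals $\delta_{\underline{a}(z),0} v^{T_\ell^{e-f}(\overline{p},\underline{a})}\underline{a}$, matching Definition \ref{def:quantumsymoperatorsB} term for term.

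There is no real obstacle here; the only care required is bookkeeping the exponents $T^{e-f}$ versus $T^{f-e}$ (they differ by sign) and ensuring that one recognizes the formulas of Section \ref{sec:typeA} extend verbatim to $\mathcal{S}_{\bH}$ with indices in $\bZ/\ell\bZ$, which is implicit in the notation introduced in Section \ref{sec:operatorsandstats}. Since every operator appearing is linear and each side decomposes into three terms supported on disjoint types of output sequences (those with a $1$ moved left, those with a $1$ moved right, and $\underline{a}$ itself), comparing coefficients is unambiguous and the identities follow.
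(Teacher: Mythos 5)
Your proof is correct and takes essentially the same route as the paper, which states this lemma without a written proof as an immediate termwise comparison of Definition \ref{def:quantumsymoperatorsB} with the type $A$ operators (using $T_\ell^{e-f}=-T_\ell^{f-e}$ and the fact that $\OPK_{\overline{p}}^{-1}$ acts by a scalar); your expansion is exactly that verification. The only caveat is notational: the hypothesis ``$\Theta(\underline{i})=\underline{i}$'' in the statement is a typo for $\Theta(\overline{p})=\overline{p}$, which you correctly interpret as the fixed-index case.
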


\begin{remark}
In \cite{Kolb} a quantum symmetric pair using a generator of the form $v^{-1}\OPE_{\overline{p}}\OPK_{\overline{p}}^{-1} + \OPF_{\overline{p}} + \OPK_{\overline{p}}^{-1}$ is called a non-standard quantum symmetric pair. While the one that uses only generators of the form that appeared in type $C$, i.e. without the extra summand $\OPK_{\overline{p}}^{-1}$ are called standard quantum symmetric pairs.
\end{remark}

Acting with the generators on $\cF^{1}(B_N)$ respectively $\cF^{1}(B_N)$ has the same possibility of having a single summand that is not in the subspace.

\begin{remark}
In case of $\cF_N$ the operators $\OPB_{\overline{i}}$ leave the subspace $\cF^1(B_N)$ invariant, except for $-\oneh \in \overline{i}$. For the operator $\OPB_{\overline{\oneh}}$ the reason is again that it contains a summand $\rf_{\oneh}$ that does not leave $\cF^1(B_N)$ invariant. While for $\cF_N^{\oneh}$ the operators $\OPB_{\overline{0}}$ and $\OPB_{\overline{0}}^{[\oneh]}$ do not leave the subspace invariant because of the summand $\rf_{0}$.
\end{remark}

For the relations of the operators we then get the following.

\begin{proposition} \label{prop:operator_relations_B}
Assume $\ell$ odd and $\rI \in \{\bZ/\ell\bZ,\bH/\ell\bZ\}$. The linear operators $\{\OPB_{\overline{p}} \mid \overline{p} \in \rI\}$ and $\{\OPL_{\overline{p}} \mid \overline{p} \in \rI, \Theta(\overline{p})\neq\overline{p}\}$ satisfy the relations of $B_v(\rI,\Theta)$ given in Definition \ref{def:quantumsymmpair} by substituting the operators for the generators with the same name.
\end{proposition}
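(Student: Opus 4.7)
The plan is to mirror the structure of the proof of Proposition~\ref{prop:operator_relations_C}, using Lemma~\ref{lem:comparetypeAB} to rewrite the new generators $\OPB_{\overline{p}}$ in terms of the type~$A$ operators $\OPE_{\overline{p}}$, $\OPF_{\overline{p}}$, $\OPK_{\overline{p}}^{\pm 1}$. Since those type $A$ operators are already known to satisfy the defining relations of $U_v(\hat{\mathfrak{sl}}_\ell)^\prime$ on both $\cF$ and $\cF^{\oneh}$ (the action is given by the same formulas on each charge sector, as recalled at the end of Section~\ref{sec:typeA}), all the verifications become algebraic manipulations inside the image of $U_v(\hat{\mathfrak{sl}}_\ell)^\prime$, together with one extra scalar contribution coming from the summand $\OPK_{\overline{p}}^{-1}$ at a fixed index.

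First I would dispatch the easy parts verbatim as in type~$C$: the $\OPL_{\overline{p}}\OPL_{\overline{q}}$ relations and $\OPL_{\overline{p}}\OPL_{\Theta(\overline{p})}=1$ are immediate from $\OPL_{\overline{p}} = \OPK_{\overline{p}}\OPK_{\Theta(\overline{p})}^{-1}$ and the fact that distinct $\OPK$'s commute and scale basis vectors. The commutation relations between $\OPL_{\overline{q}}$ and $\OPB_{\overline{p}}$ reduce, via Lemma~\ref{lem:comparetypeAB}, to the known commutation of $\OPK_{\overline{q}}^{\pm 1}$ with $\OPE_{\overline{p}},\OPF_{\overline{p}}$; a case-by-case inspection of whether $\overline{p}=\overline{q}$, $\overline{p}=\Theta(\overline{q})$, $\overline{p}$ and $\overline{q}$ linked (respectively $\overline{p}$ and $\Theta(\overline{q})$ linked), or none of the above, yields the prescribed scalars $v^{\pm 1}, v^{\pm 2}, v^{\pm 3}$. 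The extra $\OPK_{\overline{p}}^{-1}$ summand in the fixed-index case only contributes scalars and does not interact with the commutation with $\OPL_{\overline{q}}$.

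Next I would handle the $\OPB$--$\OPB$ relations. Two operators $\OPB_{\overline{p}}$, $\OPB_{\overline{q}}$ with $\overline{p}, \Theta(\overline{p}), \overline{q}, \Theta(\overline{q})$ pairwise non-linked commute because their summands act at positions that are not neighbours, exactly as in type~$C$. The commutator $[\OPB_{\overline{p}},\OPB_{\Theta(\overline{p})}]$ for $\overline{p}$ standard is a direct $\OPE/\OPF$ calculation using Lemma~\ref{lem:comparetypeAB} and the commutator relation $[\OPE_{\overline{p}},\OPF_{\overline{p}}]=(\OPK_{\overline{p}}-\OPK_{\overline{p}}^{-1})/(v-v^{-1})$; one checks that all cross terms cancel and what remains rearranges to $(\OPL_{\overline{p}}-\OPL_{\Theta(\overline{p})})/(v-v^{-1})$. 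For the Serre-type relations at linked pairs $\overline{p},\overline{q}$ that are not both fixed or $\Theta$-linked, the relation only involves four indices in the Dynkin diagram, none of which interact beyond what happens locally; the computation is identical to the finite-type quantum symmetric pair of type AIII, so one invokes the local computations of \cite{ES} (or \cite{Letzter2}) as in type~$C$. The assumption $\ell>3$ guarantees that these four indices are genuinely distinct so the locality argument applies.

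The main obstacle, and the reason a separate proposition is needed from the type~$C$ case, is the Serre relation for a $\Theta$-linked pair $\overline{p},\overline{q}=\Theta(\overline{p})$ and, more delicately, the relations involving the fixed index (where $\OPB_{\overline{p}}$ carries the extra $\OPK_{\overline{p}}^{-1}$ summand). Here I would substitute Lemma~\ref{lem:comparetypeAB} into the non-commutative polynomial $\mathrm{SR}_v(\OPB_{\overline{p}},\OPB_{\overline{q}})$ and expand. The quantum Serre relations for the $\OPE$'s and $\OPF$'s separately make the leading cubic terms vanish; the cross terms involving one $\OPK^{-1}$ summand must be collected and checked to match the prescribed right-hand side, which in the $\Theta$-linked case is the distinctive $-[2]_v \OPB_{\overline{p}}(v\OPL_{\overline{p}}+v^{-2}\OPL_{\Theta(\overline{p})})$ term of Definition~\ref{def:quantumsymmpair}, and in the fixed-index-versus-standard case is either $0$ or $\OPB_{\overline{q}}$. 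The appearance of $v^{-1}$ in front of $\OPE_{\overline{p}}\OPK_{\overline{p}}^{-1}$ at a fixed index (and similarly in Definition~\ref{def:quantumsymoperatorsB}) is precisely calibrated so these coefficients come out right, mirroring the coefficient choices for the ``non-standard'' quantum symmetric pair in \cite{Kolb}. The operator $\OPB_{\overline{0}}^{[z]}$ is not part of the algebra so no relation needs to be verified for it here.
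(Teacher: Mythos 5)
Your proposal is correct and follows essentially the same route as the paper: reduce every relation not involving a fixed index to Proposition \ref{prop:operator_relations_C}, use Lemma \ref{lem:comparetypeAB} to see that the fixed-index operator differs only by the extra $\OPK_{\overline{p}}^{-1}$ summand, and then verify directly the few quantum Serre relations that involve the fixed index. The only (harmless) redundancy is that you re-derive the $\Theta$-linked Serre relation and the standard commutator $[\OPB_{\overline{p}},\OPB_{\Theta(\overline{p})}]$, which do not involve the fixed index and are therefore already covered by the type $C$ case.
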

\begin{proof}
All relations in Definition \ref{def:quantumsymmpair} not involving the operator $\OPB_{\overline{p}}$ for $\overline{p}$ fixed are precisely the same as in type $C$ and thus hold by Proposition \ref{prop:operator_relations_C}. Note that for $\overline{p}$ fixed, the new operator $\OPB_{\overline{p}}$ only differs from the description in Lemma \ref{lem:comparetypeA} by adding a term of the form $\OPK_{\overline{p}}^{-1}$. The commutator relation with all $\OPL_{\overline{q}}$ is then obvious. The commutator relation with ``distant'' $\OPB_{\overline{q}}$ is also obvious. Hence the only one left are the non-trivial quantum Serre relations. There are only two that involve a fixed index, which are both a simple few line calculation to verify.
\end{proof}

As in type $C$ this all combines then to the following. For the case of weights in $X^{1,+}$ the statement is the complete analogue to Theorem \ref{thm:actionanddecompC} with precisely the same proof.

\begin{theorem} \label{thm:actionanddecompB1}
Assume $\ell$ odd. There exists an action of $B_v(\bH/\ell\bZ,\Theta)$ on $\cF_N$ such that for $\lambda \in X^{1,+}$ the decomposition of $[\Delta_{\mathrm{q}}(\lambda) \otimes \Delta_{\mathrm{q}}(\omega_1)]$ in $[\Uqmod]$ with respect to the classes of Weyl modules is obtained from
\[
\sum_{\overline{p}\in \bH/\ell\bZ} \OPB_{\overline{p}}\underline{\lambda}
\]
by projecting onto the subspace $\cF(C_N)$ and evaluating $v=1$. 

Furthermore if $[\Delta_{\mathrm{q}}(\mu)]$ and $[\Delta_{\mathrm{q}}(\nu)]$ appear in the decomposition for $\mu$ and $\nu$ linked, then there exists a unique $\overline{p}\in \bH/\ell\bZ$ such that $\underline{\mu}$ and $\underline{\nu}$ appear in $\OPB_{\overline{p}}\underline{\lambda}$.
\end{theorem}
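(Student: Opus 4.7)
The proof will directly mirror the structure of Theorem \ref{thm:actionanddecompC}, with three ingredients: (i) the action exists by Proposition \ref{prop:operator_relations_B} with $\rI=\bH/\ell\bZ$; (ii) the sum of the operators, evaluated at $v=1$ and projected onto $\cF^1(B_N)$, reproduces the combinatorial shape of the tensor product decomposition given by Proposition \ref{prop:tensorproductB} together with Proposition \ref{prop:tensorproductdecomp}; (iii) the partition of the resulting summands by index $\overline{p}$ matches the linkage classes via the Lemmas of the previous subsection.

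For part (ii), fix $\lambda\in X^{1,+}$ so that $\underline{\lambda}\in\cS_{\bZ,N}$. Since each $j\in\bH$ lies in a unique coset $\overline{p}\in\bH/\ell\bZ$, summing $\OPB_{\overline{p}}$ over all $\overline{p}$ runs the moving operator indices $j$ through all of $\bH$. The analogue of Lemma \ref{lem:nearlystable} in type $B$ tells us that $\re_j\underline{\lambda}$ is nonzero precisely when $\lambda-\varepsilon_i\in X^+$ for $j=\underline{\lambda}_i-\oneh$, and similarly for $\rf_j$, so at $v=1$ the "standard" summands of $\sum_{\overline{p}}\OPB_{\overline{p}}\underline{\lambda}$ account exactly for $\bigoplus_i\Delta_v(\lambda+\varepsilon_i)\oplus\bigoplus_i\Delta_v(\lambda-\varepsilon_i)$. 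Because $\lambda\in X^{1,+}$ forces $\lambda_N\geq\oneh>0$, Proposition \ref{prop:tensorproductB} requires exactly one extra copy of $\Delta_v(\lambda)$; this is supplied by the fixed-index operator $\OPB_{\overline{\ellh}}$ whose definition contains the extra summand $v^{T_\ell^{e-f}(\overline{\ellh},\underline{a})}\underline{a}$, evaluating at $v=1$ to a single $\underline{\lambda}$. The projection to $\cF^1(B_N)$ simply discards the unique contribution that can escape the subspace, namely the $\rf_{\oneh}$-term (compare the remark after Lemma \ref{lem:nearlystable}'s analogue), which is consistent with the domain of the natural tensor product decomposition.

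For the uniqueness in part (iii), suppose $\underline{\mu}$ and $\underline{\nu}$ both appear in the sum and correspond to linked weights $\mu\sim\nu$. If both are obtained from $\underline{\lambda}$ by a nonzero moving operator, then one of Lemma \ref{lem:linkedEorFtypeBodd}, Lemma \ref{lem:linkedEFtypedifferentBodd}, or Lemma \ref{lem:linkedEFtypesameBodd} applies, and in every case the congruence condition pins down the pair of indices $(r,s)$ used to produce $\underline{\mu}$ and $\underline{\nu}$ to a single coset $\overline{p}\in\bH/\ell\bZ$ (with the $\Theta$-linked case $r\in\oneh+\ell\bZ$ handled by $\OPB_{\overline{\ellh}}$, since $\overline{\ellh}$ is $\Theta$-fixed and collects both the $\re$- and $\rf$-contributions at that coset). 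If instead one of the two classes is $[\Delta_{\mathrm{q}}(\lambda)]$ itself, coming from the identity summand of $\OPB_{\overline{\ellh}}$, then Lemma \ref{lem:linkedEIdtypeBodd1} tells us that any $\re_r\underline{\lambda}$ or $\rf_r\underline{\lambda}$ linked to $\lambda$ must satisfy $r\in\ellh+\ell\bZ$, i.e.\ also belongs to $\OPB_{\overline{\ellh}}$. In either situation the operator is unique.

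The only genuinely new bookkeeping compared to type $C$ is the extra identity summand of $\OPB_{\overline{\ellh}}$ and the fact that it couples nontrivially to the moving contributions via the $\Theta$-fixed coset. I expect the main obstacle to be verifying that this identity summand is counted with the correct multiplicity (exactly once, since $\overline{\ellh}$ is a single coset) and that its coefficient at $v=1$ is $1$; this is immediate from the definition, but must be checked together with Lemma \ref{lem:linkedEIdtypeBodd1} so that the uniqueness assertion survives in the presence of a diagonal contribution. The remaining relations and congruences are strictly local calculations that reduce to the type $C$ arguments.
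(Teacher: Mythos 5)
Your proposal is correct and follows essentially the same route as the paper, which proves this theorem by declaring it the complete analogue of Theorem \ref{thm:actionanddecompC} with precisely the same proof: the action comes from Proposition \ref{prop:operator_relations_B}, the decomposition from Propositions \ref{prop:tensorproductB} and \ref{prop:tensorproductdecomp} together with the type $B$ analogue of Lemma \ref{lem:nearlystable}, and uniqueness from Lemmas \ref{lem:linkedEorFtypeBodd}, \ref{lem:linkedEFtypedifferentBodd}, \ref{lem:linkedEFtypesameBodd}, and \ref{lem:linkedEIdtypeBodd1}, with your treatment of the extra $\Delta_v(\lambda)$ summand via $\delta_{\mathrm{pos}}=1$ and the identity term of $\OPB_{\overline{\ellh}}$ being exactly the intended bookkeeping. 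One small correction: the case $r\in\oneh+\ell\bZ$ of Lemma \ref{lem:linkedEFtypesameBodd} is absorbed by the $\Theta$-linked operator $\OPB_{\overline{-\oneh}}$ (whose $\rf$-part runs over $\overline{\oneh}=-\overline{-\oneh}$), not by the $\Theta$-fixed operator $\OPB_{\overline{\ellh}}$; the latter is the one governed by Lemmas \ref{lem:linkedEFtypedifferentBodd} and \ref{lem:linkedEIdtypeBodd1} and carries the identity summand, so your uniqueness conclusion stands but the attribution in your parenthetical should be swapped.
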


In case of $\lambda \in X^{\oneh,+}$, the dependence of the tensor product decomposition rule on $\lambda_N$ makes this less clean.

\begin{theorem} \label{thm:actionanddecompB2}
Assume $\ell$ odd. There exists an action of $B_v(\bZ/\ell\bZ,\Theta)$ on $\cF_N^{\oneh}$ such that for $\lambda \in X^{\oneh,+}$ the decomposition of $[\Delta_{\mathrm{q}}(\lambda) \otimes \Delta_{\mathrm{q}}(\omega_1)]$ in $[\Uqmod]$ with respect to the classes of Weyl modules is obtained from
\[
\sum_{\overline{p}\in \bZ/\ell\bZ} \OPB_{\overline{p}}\underline{\lambda}
\]
if $\lambda_N>0$ and from
\[
\sum_{\overline{p}\in \bZ/\ell\bZ,\overline{p}\neq\overline{0}} \OPB_{\overline{p}}\underline{\lambda} + \OPB_{\overline{0}}^{[\oneh]}\underline{\lambda}
\]
if $\lambda_N=0$, by projecting onto the subspace $\cF(C_N)$ and evaluating $v=1$. 

If $[\Delta_{\mathrm{q}}(\mu)]$ and $[\Delta_{\mathrm{q}}(\nu)]$ appear in the decomposition for $\mu$ and $\nu$ linked, then there exists a unique $\overline{p}\in \bZ/\ell\bZ$ such that $\underline{\mu}$ and $\underline{\nu}$ appear in $\OPB_{\overline{p}}\underline{\lambda}$.
\end{theorem}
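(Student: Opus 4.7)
The plan is to follow the template of Theorem \ref{thm:actionanddecompC}, modified for the half-integer weight lattice of type $B$. First I would note that the existence of a $B_v(\bZ/\ell\bZ,\Theta)$-action on $\cF^{\oneh}$ is precisely Proposition \ref{prop:operator_relations_B} applied with $\rI=\bZ/\ell\bZ$; since every $\OPB_{\overline{p}}$ and $\OPL_{\overline{p}}$ preserves the charge of a sequence (they only move a single $1$), the action restricts to $\cF_N^{\oneh}$ for every $N$. The remaining work is to match each summand in the tensor product decomposition with a combinatorial contribution on the Fock space side.

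To this end, I would combine Proposition \ref{prop:tensorproductB} with Proposition \ref{prop:tensorproductdecomp} to obtain, in the Grothendieck group,
\[
[\Delta_{\mathrm{q}}(\lambda)\otimes\Delta_{\mathrm{q}}(\omega_1)]=\sum_{i:\lambda+\varepsilon_i\in X^+}[\Delta_{\mathrm{q}}(\lambda+\varepsilon_i)]+\sum_{i:\lambda-\varepsilon_i\in X^+}[\Delta_{\mathrm{q}}(\lambda-\varepsilon_i)]+\delta_{\mathrm{pos}}\,[\Delta_{\mathrm{q}}(\lambda)].
\]
The first two sums are handled exactly as in type $C$: the $\cF^{\oneh}$-analogue of Lemma \ref{lem:nearlystable} puts the dominant weights $\lambda\pm\varepsilon_i$ in bijection with the non-zero sequences $\re_j\underline{\lambda}$ and $\rf_j\underline{\lambda}$; summing over all residue classes $\overline{p}\in\bZ/\ell\bZ$ lets $j$ range over all of $\bH$; and projecting onto $\cF^{\oneh}(B_N)$ discards precisely the single summand $\rf_0\underline{\lambda}$, which is the only summand that can leave the subspace and which does not correspond to any shift $\lambda+\varepsilon_i$ in the weight lattice.

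The main point — and what is genuinely new compared to type $C$ — is the residual summand $\delta_{\mathrm{pos}}[\Delta_{\mathrm{q}}(\lambda)]$. Inspecting Definition \ref{def:quantumsymoperatorsB} at the fixed index $\overline{0}$, the operator $\OPB_{\overline{0}}$ carries an extra diagonal term $v^{T_\ell^{e-f}(\overline{0},\underline{a})}\underline{a}$ which evaluates to one copy of $\underline{\lambda}$ at $v=1$. I would then observe the key numerical coincidence: under the embedding, $\lambda_N>0$ forces $\underline{\lambda}_N\geq\threeh$ and hence $\underline{a}_{\underline{\lambda}}(\oneh)=0$, while $\lambda_N=0$ gives $\underline{\lambda}_N=\oneh$ and hence $\underline{a}_{\underline{\lambda}}(\oneh)=1$. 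So for $\lambda_N>0$ the scalar term in $\OPB_{\overline{0}}$ already supplies the needed $[\Delta_{\mathrm{q}}(\lambda)]$, while for $\lambda_N=0$ one must suppress it; this is exactly what the factor $\delta_{\underline{a}(\oneh),0}$ in the modified operator $\OPB_{\overline{0}}^{[\oneh]}$ accomplishes. This explains and proves the case split in the statement.

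Finally, the uniqueness of $\overline{p}$ for linked pairs follows from Lemmas \ref{lem:linkedEorFtypeBodd}, \ref{lem:linkedEFtypedifferentBodd}, the half-integer portion of Lemma \ref{lem:linkedEFtypesameBodd}, and Lemma \ref{lem:linkedEIdtypeBodd2}: each linkage relation pins down a single residue class modulo $\ell$ (one of $s+\ell\bZ$, $-s+\ell\bZ$, $\ellph+\ell\bZ$, or $\ell\bZ$), and the grouping of $\re$-, $\rf$-, and scalar-contributions inside a single $\OPB_{\overline{p}}$ is designed to match exactly these classes. The mildly delicate step here is checking that the new self-linkage phenomenon $\lambda\sim\re_r\underline{\lambda}$ at $r\in\ell\bZ$ from Lemma \ref{lem:linkedEIdtypeBodd2} is absorbed by the fixed-index diagonal term of $\OPB_{\overline{0}}$ or $\OPB_{\overline{0}}^{[\oneh]}$, which is immediate since both the congruence and the operator are indexed by $\overline{0}$.
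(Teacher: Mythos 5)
Your proposal is correct and follows essentially the same route as the paper, which proves this theorem by reducing to Theorems \ref{thm:actionanddecompB1} and \ref{thm:actionanddecompC} (i.e.\ Proposition \ref{prop:operator_relations_B} for the action, Propositions \ref{prop:tensorproductB} and \ref{prop:tensorproductdecomp} plus the analogue of Lemma \ref{lem:nearlystable} for the decomposition, and the linkage lemmas for uniqueness) and then handling the extra $\delta_{\mathrm{pos}}$-summand via the diagonal term of $\OPB_{\overline{0}}$ versus $\OPB_{\overline{0}}^{[\oneh]}$. Your explicit observation that $\lambda_N=0$ corresponds exactly to $\underline{a}_{\underline{\lambda}}(\oneh)=1$, which is what the factor $\delta_{\underline{a}(\oneh),0}$ detects, is the same mechanism the paper invokes, just spelled out in more detail.
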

\begin{proof}
The only difference to Theorem \ref{thm:actionanddecompB1} is that in case of $\lambda \in X^{\oneh,+}$ the class of the tensor product $[\Delta_{\mathrm{q}}(\lambda) \otimes \Delta_{\mathrm{q}}(\omega_1)]$ contains a summand $[\Delta_{\mathrm{q}}(\lambda)]$ if and only if $\lambda_N>0$, But $\OPB_{\overline{0}}\underline{\lambda}$ always contain a non-zero multiple of $\underline{\lambda}$ as a summand, hence one has to apply $\OPB_{\overline{0}}^{[\oneh]}$ instead in case $\lambda_N=0$.
\end{proof}

Let $0 \leq k < \ell$ and $N=m\ell + k$ for $m \geq 0$. One can use the map from Definition \ref{def:shiftsequences} to embed $\cF^{1}(B_{m\ell+k})$ into $\cF^{1}_k$. In this case the analogue of Proposition \ref{prop:operatorsasdecomp} follows immediately.

In case of $\underline{a}\in\cS_{\bH,N}$ the map from Definition \ref{def:shiftsequences} embeds $\cF^{\oneh}(B_{m\ell+k})$ into $\cF^{\oneh}_k$. To formulate the corresponding statement, define $\OPB_{\overline{p}}^{[z]}=\OPB_{\overline{p}}$ if $\overline{p}\neq\overline{0}$ for $z\in\oneh+\ell\bZ$.

\begin{proposition} \label{prop:operatorsasdecompBhint}
Let $\underline{a} \in \mathcal{S}_{\bH,k}$ and $r \geq 0$. Then for $m \gg 0$, $\underline{a}=\underline{\lambda}^{(m)}$ for $\lambda \in X^{\oneh,+}$ for $U_{\mathrm{q}}$ of type $B_{m\ell+k}$. Furthermore there are Laurent polynomials $d_{\lambda,\mu}(v)$ with non-negative integer coefficients for $\mu \in X^{\oneh,+}$ such that 
\[
\sum_{(\bZ/\ell\bZ)^r} \OPB_{\overline{p}_1}^{[z]} \cdots \OPB_{\overline{p}_r}^{[z]}\underline{\lambda}^{(m)} = \sum_ {\mu \text{ dominant for }U_{\mathrm{q}}} d_{\lambda,\mu}(v)\underline{\mu}^{(m)},
\]
with $z=\oneh-m\ell$ and
\[
[\Delta_{\mathrm{q}}(\lambda) \otimes \Delta_{\mathrm{q}}(\omega_1)^{\otimes r}] = \sum_ {\mu \text{ dominant for }U_{\mathrm{q}}} d_{\lambda,\mu}(1)[\Delta_{\mathrm{q}}(\mu)].
\]
\end{proposition}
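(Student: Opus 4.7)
The plan is to mimic the strategy of Proposition~\ref{prop:operatorsasdecomp}, adapted to the half-integer setting. First, given $\underline{a} \in \mathcal{S}_{\bH,k}$, I would identify the leftmost position $-n'$ of a zero entry in the non-positive part of $\underline{a}$ (setting $n' = 0$ if no such zero exists). Applying a sequence of operators $\re_i$ collapses $\underline{a}$ to a sequence $\underline{b}$ whose non-positive half is entirely $1$'s and which has exactly $k$ ones in the positive half. Such a $\underline{b}$ is the image under the shift by $m'$ steps of a dominant half-integer weight $\lambda^{(0)} \in X^{\oneh,+}$ of rank $m'\ell + k$ for any $m'$ with $m'\ell > n'$; reversing the $\re_i$'s by the corresponding $\rf_i$'s yields a dominant $\lambda \in X^{\oneh,+}$ with $\underline{a} = \underline{\lambda}^{(m')}$. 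Choosing $m > m'$ with $m\ell > n' + r$ and viewing $\lambda$ as a dominant weight of rank $m\ell + k$ by appending $m - m'$ zero entries, we obtain $\underline{a} = \underline{\lambda}^{(m)}$ as required.

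Next I would argue by induction on $r$ that, for this choice of $m$, every summand appearing in $\OPB_{\overline{p}_1}^{[z]} \cdots \OPB_{\overline{p}_r}^{[z]} \underline{\lambda}^{(m)}$ remains in the shifted embedding of $\cF^{\oneh}(B_{m\ell+k})$. The only basic operator that can push a sequence out of this subspace is $\rf_0$, which requires a $1$ at position $-\oneh$; since each generator moves an entry by exactly one half-step and we apply at most $r$ generators, the condition $m\ell > n' + r$ guarantees that no $1$ originally in the ``padding region'' of $\underline{\lambda}^{(m)}$ can reach position $-\oneh$, hence $\rf_0$ always acts as zero along the iteration.

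The heart of the argument is the role of the distinguished position $z = \oneh - m\ell$. By construction of the shift map this is the image of the position $\oneh$ in $\mathcal{S}_{\bH,N}$ under the embedding into $\cF^{\oneh}_k$, so for any dominant $\mu \in X^{\oneh,+}$ of rank $m\ell + k$ the value $\underline{\mu}^{(m)}(z)$ equals $\underline{a}_{\underline{\mu}}(\oneh)$, which is $1$ precisely when $\mu_N = 0$. Therefore, by Lemma~\ref{lem:comparetypeAB}, the operator $\OPB_{\overline{0}}^{[z]}$ acts on $\underline{\mu}^{(m)}$ as the ordinary $\OPB_{\overline{0}}$ when $\mu_N = 0$ and as $\OPB_{\overline{0}}$ plus the extra scalar multiple $\OPK_{\overline{0}}^{-1}\underline{\mu}^{(m)}$ when $\mu_N > 0$. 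Comparing this with the case distinction in Theorem~\ref{thm:actionanddecompB2}, one step of the sum $\sum_{\overline{p} \in \bZ/\ell\bZ} \OPB_{\overline{p}}^{[z]}$ applied to $\underline{\mu}^{(m)}$ reproduces, after projecting onto $\cF^{\oneh}(B_{m\ell+k})$ and evaluating at $v=1$, exactly the class $[\Delta_{\mathrm{q}}(\mu) \otimes \Delta_{\mathrm{q}}(\omega_1)]$ in $[\Uqmod]$. Iterating $r$ times and invoking Proposition~\ref{prop:tensorproductdecomp} yields the claimed $d_{\lambda,\mu}(v)$ as the coefficient of $\underline{\mu}^{(m)}$ in the sum; positivity of the coefficients of $d_{\lambda,\mu}(v)$ follows from the positivity of the exponents in Definition~\ref{def:quantumsymoperatorsB} and the fact that each $\re_j,\rf_j$ contributes a single basis sequence.

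The main obstacle is the careful bookkeeping of the shift: verifying that $z = \oneh - m\ell$ remains the correct ``detection position'' for the condition $\mu_N = 0$ for every intermediate weight $\mu$ arising during the iteration, and confirming that along the entire process the generators neither leave the shifted embedding nor alter the value at $z$ in a way incompatible with the single uniform operator $\OPB_{\overline{0}}^{[z]}$. Once these bookkeeping points are settled, the combinatorial identity reduces to a straightforward induction using Theorem~\ref{thm:actionanddecompB2}.
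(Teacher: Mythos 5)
Your proposal is correct in substance, but it handles the key step differently from the paper. Both arguments begin identically, by transplanting the setup of Proposition \ref{prop:operatorsasdecomp}: locate the leftmost zero at position $-n'$, produce $\lambda$ with $\underline{a}=\underline{\lambda}^{(m)}$, choose $m\ell>n'+r$, and observe that no iterated application of the generators can leave the shifted embedding. Where you diverge is in justifying the replacement of $\OPB_{\overline{0}}$ by $\OPB_{\overline{0}}^{[z]}$. The paper argues that for $m\gg 0$ \emph{every} intermediate weight $\nu$ occurring in $[\Delta_{\mathrm{q}}(\lambda)\otimes\Delta_{\mathrm{q}}(\omega_1)^{\otimes s}]$, $0\leq s<r$, satisfies $\nu_{k+m\ell}=0$, so one is always in the second case of Theorem \ref{thm:actionanddecompB2} and the operator $\OPB_{\overline{0}}^{[\oneh]}$ (which becomes $\OPB_{\overline{0}}^{[z]}$ after the shift) is the one that must be used throughout. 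You instead show that $\OPB_{\overline{0}}^{[z]}$ is a \emph{uniform} operator: since $z=\oneh-m\ell$ is the image of the position $\oneh$ under the shift, the factor $\delta_{\underline{\mu}^{(m)}(z),0}$ detects whether $\mu_N=0$ and switches the extra scalar summand on or off exactly as the case distinction in Theorem \ref{thm:actionanddecompB2} requires. Your route is more robust --- it does not rely on the vanishing of the last weight coordinate for all intermediate weights --- while the paper's is shorter because, once that vanishing is established, no case analysis is needed at all.

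Two small slips, neither fatal. First, your description of $\OPB_{\overline{0}}^{[z]}$ is stated backwards relative to the paper's conventions: by Definition \ref{def:quantumsymoperatorsB} and Lemma \ref{lem:comparetypeAB}, the operator $\OPB_{\overline{0}}$ \emph{always} carries the summand $\OPK_{\overline{0}}^{-1}\underline{a}$, so $\OPB_{\overline{0}}^{[z]}$ coincides with $\OPB_{\overline{0}}$ precisely when $\mu_N>0$ (where $\underline{\mu}^{(m)}(z)=0$) and \emph{drops} the extra summand when $\mu_N=0$; your sentence reads as if the extra term were added in the case $\mu_N>0$ on top of an already complete $\OPB_{\overline{0}}$. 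The comparison with Theorem \ref{thm:actionanddecompB2} still comes out correctly, so this is a notational confusion rather than a gap. Second, non-negativity of the coefficients of $d_{\lambda,\mu}(v)$ has nothing to do with ``positivity of the exponents'' (the statistics $R_\ell^{e-f}$ etc.\ can be negative); it follows solely from the fact that each summand of each generator contributes a single monomial $v^{(\cdot)}$ times a basis sequence, which is the second half of your own sentence.
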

\begin{proof}
Note that the only difference to Proposition \ref{prop:operatorsasdecomp} is that instead of $\OPB_{\overline{0}}$ one has to use $\OPB_{\overline{0}}^{[z]}$. We already know that $\sum_{(\bZ/\ell\bZ)^r} \OPB_{\overline{p}_1} \cdots \OPB_{\overline{p}_r}\underline{\lambda}^{(m)}$ is contained in the shifted embedding of $\cF^{\oneh}(B_{m\ell+k})$ for $m \gg 0$, hence also the summand $\sum_{(\bZ/\ell\bZ)^r} \OPB_{\overline{p}_1}^{[z]} \cdots \OPB_{\overline{p}_r}^{[z]}\underline{\lambda}^{(m)}$.

In case $m \gg 0$ all weights $\nu$ such that $[\Delta_{\mathrm{q}}(\nu)]$ appears in $[\Delta_{\mathrm{q}}(\lambda) \otimes \Delta_{\mathrm{q}}(\omega_1)^{\otimes s}]$ (for $0 \leq s < r$) have the property that $\nu_{k+m\ell}=0$, hence $\underline{\nu}(\oneh)=1$. Hence the $[\Delta_{\mathrm{q}}(\nu) \otimes \Delta_{\mathrm{q}}(\omega_1)]$ never contain the summand $[\Delta_{\mathrm{q}}(\nu)]$. Thus $\OPB_{\overline{0}}^{[\oneh]}\underline{\nu}$ has to be used to get the tensor product decomposition by Theorem \ref{thm:actionanddecompB2}. After the shift this becomes the operator $\OPB_{\overline{0}}^{[z]}\underline{\nu}$.
\end{proof}

\subsection{Linkage and operators for even $\ell$}

The first difference in case $\ell$ even is that $\ell_\beta=2$ for the roots of the form $\beta=\beta_{ij}^\pm$. These were responsible in the odd case that all indices had to be taken modulo $\ell$ to obtain the operators and that one had to group $\re_p$ and $\rf_q$ for $p+q \in \ell\bZ$. Now this is replaced by the analogue statements with $\ellh$ everywhere. Thus we have to make the following assumption throughout the section
\[
\ell \text{ is even and } \ellh > 3.
\]
As before this is needed to avoid quantum symmetric pairs for small $\ellh$, the general arguments for linkage work the same.

Namely one obtains the two statements.

\begin{lemma} \label{lem:linkedEorFtypeBeven}
Let $\underline{\lambda} \in X^{+}_\rho$ and $\ell$ even. Assume that $\underline{\mu}=\re_r\underline{\lambda}$ and $\underline{\nu}=\re_s\underline{\lambda}$ are defined for $r \neq s$. Then $\mu$ and $\nu$ are linked if and only if $r \in s + (\ellh)\bZ$.

Assume that $\underline{\mu^\prime}=\rf_r\underline{\lambda}$ and $\underline{\nu^\prime}=\rf_s\underline{\lambda}$ are defined for $r \neq s$. Then $\mu^\prime$ and $\nu^\prime$ are linked if and only if $r \in s + (\ellh)\bZ$.
\end{lemma}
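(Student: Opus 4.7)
The plan is to follow the proof of Lemma \ref{lem:linkedEorFtypeBodd} essentially line by line, with the single structural change that in type $B$ the roots $\beta_{ij}^\pm$ are long, so for $\ell$ even we have $\ell_{\beta_{ij}^\pm} = \ellh$ while the short roots $\beta_i$ still satisfy $\ell_{\beta_i} = \ell$. Concretely, I would fix $i<j$ by $\underline{\lambda}_i = r+\oneh$ and $\underline{\lambda}_j = s+\oneh$, so that $\underline{\mu} = \underline{\lambda}-\varepsilon_i$ and $\underline{\nu} = \underline{\lambda}-\varepsilon_j$. Assuming linkage, Lemma \ref{lem:goodminimalgallery} supplies alcoves $A_{\underline{\mu}},A_{\underline{\nu}}$ and a minimal gallery whose crossed hyperplanes are all strictly between them, and the only positive roots $\beta$ with $|(\underline{\nu}-\underline{\mu},\beta^\vee)|>1$ remain $\beta_{ij}^-$, $\beta_i$, and $\beta_j$.

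If some crossed hyperplane is of the form $H_{\beta_{ij}^-,m}$, then the equation $(\underline{\mu},(\beta_{ij}^-)^\vee) = 1 + m\ellh$ immediately yields $\underline{\lambda}_i - \underline{\lambda}_j = m\ellh$, hence $r \in s + \ellh\bZ$. Otherwise, both a hyperplane $H_{\beta_i,m^\prime}$ and a hyperplane $H_{\beta_j,m^{\prime\prime}}$ must appear in the gallery, giving $2\underline{\lambda}_i = m^\prime\ell$ and $2\underline{\lambda}_j = m^{\prime\prime}\ell$, and so $\underline{\lambda}_i - \underline{\lambda}_j = (m^\prime-m^{\prime\prime})\ellh$. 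Then $\underline{\lambda}$ lies on the hyperplane $H_{\beta_{ij}^-,m^\prime-m^{\prime\prime}}$, and since $\ellh \geq 4$ by the standing assumption $\ellh > 3$, neither $\underline{\mu}$ nor $\underline{\nu}$ sits on that hyperplane; hence it is strictly between $A_{\underline{\mu}}$ and $A_{\underline{\nu}}$, contradicting the sub-case hypothesis. Moreover, for $\underline{\lambda}\in X^{\oneh,+}_\rho$ this sub-case is already vacuous by a parity count, since $2\underline{\lambda}_i$ would be odd while $m^\prime\ell$ is even.

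For the converse, if $r - s = m\ellh$ then $(\underline{\nu},(\beta_{ij}^-)^\vee) = 1 + m\,\ell_{\beta_{ij}^-}$, so $s_{\beta_{ij}^-,m}(\underline{\nu}) = \underline{\mu}$ and the weights are linked. The statement for $\rf_r,\rf_s$ follows by the identical argument with $\underline{\mu^\prime} = \underline{\lambda} + \varepsilon_i$ and $\underline{\nu^\prime} = \underline{\lambda} + \varepsilon_j$, since $\underline{\mu^\prime} - \underline{\nu^\prime}$ agrees up to sign with $\underline{\nu} - \underline{\mu}$ and the $\beta_{ij}^-$-computation is insensitive to this sign. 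The main obstacle is essentially bookkeeping: the case-B argument that derived a contradiction via parity in the odd proof must be re-examined with the rescaled spacing $\ellh$, and one needs to verify that even without any parity constraint on $m^\prime - m^{\prime\prime}$ the resulting $\beta_{ij}^-$-hyperplane through $\underline{\lambda}$ is still strictly between the alcoves, which reduces to the single inequality $\ellh \geq 2$.
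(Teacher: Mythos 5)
Your proof follows exactly the route the paper intends: the paper gives no separate argument for this lemma, asserting only that the odd-case proof (Lemma \ref{lem:linkedEorFtypeBodd}) goes through with $\ell$ replaced by $\ellh$ for the long roots $\beta_{ij}^{\pm}$, and that is precisely what you carry out, so the argument is sound. One small correction: a hyperplane $H_{\beta_i,m^\prime}$ strictly between $A_{\underline{\mu}}$ and $A_{\underline{\nu}}$ forces $m^\prime\ell$ to be the unique integer strictly between $(\underline{\mu},\beta_i^\vee)=2\underline{\lambda}_i-2$ and $(\underline{\nu},\beta_i^\vee)=2\underline{\lambda}_i$, i.e. $m^\prime\ell = 2\underline{\lambda}_i-1$ rather than $2\underline{\lambda}_i$ --- an off-by-one that the paper's own odd-case proof also commits. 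This is harmless for your main chain, since only the difference $(m^\prime-m^{\prime\prime})\ell = 2(\underline{\lambda}_i-\underline{\lambda}_j)$ is used, but it reverses your parity aside: the sub-case with no crossed $\beta_{ij}^-$-hyperplane is vacuous for $\underline{\lambda}\in X^{1,+}_\rho$ (where $2\underline{\lambda}_i-1$ is odd while $m^\prime\ell$ is even), not for $\underline{\lambda}\in X^{\oneh,+}_\rho$.
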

\begin{lemma} \label{lem:linkedEFtypedifferentBeven}
Let $\underline{\lambda} \in X^{+}_\rho$ and $\ell$ even. Assume that $\underline{\mu}=\re_r\underline{\lambda}$ and $\underline{\nu}=\rf_s\underline{\lambda}$ are defined for $r+1 \neq s$. Then $\mu$ and $\nu$ are linked if and only if $r \in -s + (\ellh)\bZ$.
\end{lemma}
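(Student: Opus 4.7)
The plan is to adapt the proof of Lemma~\ref{lem:linkedEFtypedifferentBodd}, with the key modification that $\ell_{\beta_{ij}^+} = \ellh$ for even $\ell$ (since $\beta_{ij}^+$ is a long root), replacing $\ell$ by $\ellh$ wherever the hyperplane $H_{\beta_{ij}^+, m}$ appears. The condition $r+1 \neq s$ forces $\underline{\mu} = \underline{\lambda} - \varepsilon_i$ and $\underline{\nu} = \underline{\lambda} + \varepsilon_j$ to have distinct indices $i \neq j$, with $\underline{\lambda}_i = r+\oneh$ and $\underline{\lambda}_j = s-\oneh$. A direct check confirms that the positive roots $\beta$ with $|(\underline{\nu}-\underline{\mu}, \beta^\vee)| > 1$ are exactly $\{\beta_{ij}^+, \beta_i, \beta_j\}$, each contributing the value $2$. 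The converse direction is then a one-line verification: if $\underline{\lambda}_i + \underline{\lambda}_j = m\ellh$, then $(\underline{\nu}, (\beta_{ij}^+)^\vee) = 1 + m\ell_{\beta_{ij}^+}$, so $s_{\beta_{ij}^+, m}(\underline{\nu}) = \underline{\mu}$.

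For the forward direction, I would assume $\mu, \nu$ linked and invoke Lemma~\ref{lem:goodminimalgallery} to fix alcoves $A_{\underline{\mu}}, A_{\underline{\nu}}$ together with a minimal gallery crossing only hyperplanes strictly between them. If some $H_{\beta_{ij}^+, m}$ is strictly between, then since $(\underline{\mu}, (\beta_{ij}^+)^\vee) = \underline{\lambda}_i + \underline{\lambda}_j - 1$ and $(\underline{\nu}, (\beta_{ij}^+)^\vee) = \underline{\lambda}_i + \underline{\lambda}_j + 1$, the strictly-between condition forces $m\ellh = \underline{\lambda}_i + \underline{\lambda}_j$, giving $r+s \in (\ellh)\bZ$. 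Otherwise only hyperplanes of the form $H_{\beta_i, m'}$ and $H_{\beta_j, m''}$ can appear strictly between; arguing exactly as in Lemma~\ref{lem:linkedEorFtypeBodd}, since $\underline{\nu}-\underline{\mu} = \varepsilon_i+\varepsilon_j$ involves both indices, linkage forces both such hyperplanes to occur. From $m'\ell = 2\underline{\lambda}_i - 1$ and $m''\ell = 2\underline{\lambda}_j + 1$ one derives $\underline{\lambda}_i + \underline{\lambda}_j = (m' + m'')\ellh$, so that $H_{\beta_{ij}^+, m'+m''}$ sits at level exactly $\underline{\lambda}_i + \underline{\lambda}_j$ and is strictly between $A_{\underline{\mu}}$ and $A_{\underline{\nu}}$, contradicting the assumption of this subcase.

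The main obstacle is rigorously justifying that, in the second case, both $H_{\beta_i, m'}$ and $H_{\beta_j, m''}$ must occur strictly between. For regular $\underline{\mu}, \underline{\nu}$ this follows from simple transitivity of $W_\ell$ on alcoves: if only $H_{\beta_i, m'}$ were strictly between, then the minimal gallery would have length one, and the unique element carrying $A_{\underline{\nu}}$ to $A_{\underline{\mu}}$ would be $s_{\beta_i, m'}$, which maps $\underline{\nu}$ to $\underline{\nu} - \varepsilon_i \neq \underline{\mu}$ (they differ by the non-zero $\varepsilon_j$), contradicting linkage. Non-regular configurations of $\underline{\mu}, \underline{\nu}$ can be absorbed into the boundary-adjustment mechanism built into Lemma~\ref{lem:goodminimalgallery}. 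One useful consistency check in the same spirit: for $\underline{\lambda} \in X^{1,+}_\rho$ the parity mismatch between $m'\ell$ (even, since $\ell$ is even) and $2\underline{\lambda}_i - 1$ (odd) rules out any $H_{\beta_i, m'}$ strictly between a priori, so the second case is entirely vacuous in that subcase and contradicts linkage immediately.
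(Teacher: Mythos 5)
Your proof is correct and follows exactly the route the paper intends: the paper states this lemma without a separate proof, indicating only that one repeats the argument of Lemma \ref{lem:linkedEFtypedifferentBodd} with $\ell$ replaced by $\ellh$ for the long-root hyperplanes, and your write-up carries out precisely that substitution (including the Case-2 contradiction via $2(\underline{\lambda}_i+\underline{\lambda}_j)=(m'+m'')\ell$ forcing $H_{\beta_{ij}^+,m'+m''}$ to lie strictly between the alcoves). If anything, you supply more detail than the source, in particular the justification that both $H_{\beta_i,m'}$ and $H_{\beta_j,m''}$ must occur and the parity observation that rules out the second case outright for $\underline{\lambda}\in X^{1,+}_\rho$.
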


The analogue of Lemma \ref{lem:linkedEFtypesameBodd} now depend on whether $\ellh$ is itself odd or even. This makes sense if one looks at the corresponding Dynkin diagrams in Section \ref{sec:quantumgroup} for $r=\ellh$. In case of $X_\rho^{1,+}$ the Dynkin diagram with $\ellh$ nodes always has the property that $\overline{\oneh}$ is $\Theta$-linked, but it depends on the parity of $\ellh$ whether $\overline{\nicefrac{\ell}{4}}$ is fixed in case $\ellh$ is even or $\overline{\nicefrac{\ell-2}{4}}$ is also $\Theta$-linked in case $\ellh$ is odd. Similarly for $X_\rho^{\oneh,+}$ the Dynkin diagram with $\ellh$ nodes always has $\overline{0}$ is fixed, but it depends on the parity of $\ellh$ whether the ``opposite'' side of the Dynkin diagram has another fixed label or a pair of $\Theta$-linked labels. Thus the analogue of Lemma \ref{lem:linkedEFtypedifferentBodd} has four possible combinations.

\begin{lemma} \label{lem:linkedEFtypesameBeven1}
Let $\ell$ even. 
\begin{enumerate}
\item Assume $\ellh$ odd, $\underline{\lambda} \in X^{1,+}_\rho$, and that $\underline{\mu}=\re_{r-1}\underline{\lambda}$ and $\underline{\nu}=\rf_{r}\underline{\lambda}$ are defined. Then $\mu$ and $\nu$ are linked if and only if $r \in \oneh + (\ellh)\bZ$.
\item Assume $\ellh$ even, $\underline{\lambda} \in X^{1,+}_\rho$, and that $\underline{\mu}=\re_{r-1}\underline{\lambda}$ and $\underline{\nu}=\rf_{r}\underline{\lambda}$ are defined. If $\mu$ and $\nu$ are linked then $r \in \oneh + (\ellh)\bZ$ or $r \in \nicefrac{\ell+2}{4} + (\ellh)\bZ$.
\item Assume $\ellh$ odd, $\underline{\lambda} \in X^{\oneh,+}_\rho$, and that $\underline{\mu}=\re_{r-1}\underline{\lambda}$ and $\underline{\nu}=\rf_{r}\underline{\lambda}$ are defined. If $\mu$ and $\nu$ are linked then $r \in \nicefrac{\ell+2}{4} + (\ellh)\bZ$.
\item Assume $\ellh$ even, $\underline{\lambda} \in X^{\oneh,+}_\rho$, and that $\underline{\mu}=\re_{r-1}\underline{\lambda}$ and $\underline{\nu}=\rf_{r}\underline{\lambda}$ are defined. Then $\mu$ and $\nu$ are not linked.
\end{enumerate}
\end{lemma}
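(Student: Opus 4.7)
For all four parts, fix the index $i$ with $\underline{\lambda}_i = r - \oneh$, so that $\underline{\mu} = \underline{\lambda} - \varepsilon_i$, $\underline{\nu} = \underline{\lambda} + \varepsilon_i$, and $\underline{\nu} - \underline{\mu} = 2\varepsilon_i$. The positive roots $\beta$ giving $|(\underline{\nu} - \underline{\mu}, \beta^\vee)| \geq 2$ are $\beta = \beta_i$ (contributing $\pm 4$) and $\beta \in \{\beta_{i,j}^\pm, \beta_{j,i}^\pm : j \neq i\}$ (contributing $\pm 2$); all other positive roots contribute $0$. For $\ell$ even one has $\ell_{\beta_i} = \ell$ and $\ell_{\beta_{ij}^\pm} = \ellh$. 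Consequently $H_{\beta_i, m}$ is strictly between $\underline{\mu}$ and $\underline{\nu}$ if and only if $m\ell \in \{2\underline{\lambda}_i - 1, 2\underline{\lambda}_i, 2\underline{\lambda}_i + 1\}$, while $H_{\beta_{ij}^\pm, m}$ is strictly between if and only if $m\ellh = \underline{\lambda}_i \mp \underline{\lambda}_j$.

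The plan is to follow the strategy of Lemmas \ref{lem:linkedEFsamestartodd} and \ref{lem:linkedEFsamestarteven} verbatim: assume $\mu$ and $\nu$ are linked, fix alcoves $A_{\underline{\mu}}$ and $A_{\underline{\nu}}$ via Lemma \ref{lem:goodminimalgallery} so that every wall crossed by a minimal gallery is strictly between them, and perform a case analysis on the first crossed hyperplane $\gamma_1$. In the subcase $\gamma_1 = \beta_i$ with $\underline{\lambda}_i \in \bZ$, the parity constraint $m_1\ell$ even together with $m_1\ell \in \{2\underline{\lambda}_i - 1, 2\underline{\lambda}_i, 2\underline{\lambda}_i + 1\}$ immediately forces $m_1\ell = 2\underline{\lambda}_i$, and the single reflection $s_{\beta_i, m_1}\underline{\nu} = \underline{\mu}$ already closes the gallery. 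In the subcase $\gamma_1 = \beta_{ij}^-$ the first reflection sends $\underline{\nu}$ to $\underline{\lambda} + \varepsilon_j$, and by the same argument as in Lemma \ref{lem:linkedEFsamestartodd} a subsequent $\gamma_p \in \{\beta_i, \beta_{ij}^+\}$ must appear, which in the $\beta_{ij}^+$ branch produces the combined relation $2\underline{\lambda}_i = (m_1 + m_p)\ellh$, and in the $\beta_i$ branch reduces, after the same three-sub-subcase dichotomy, to the same shape; the remaining starts $\gamma_1 \in \{\beta_{ij}^+, \beta_{ji}^\pm\}$ and $\gamma_1 = \beta_i$ with $\underline{\lambda}_i \in \bH$ are handled analogously. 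Every branch of the analysis delivers the single relation
\[
2\underline{\lambda}_i \in \ellh\bZ.
\]

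Granted this key relation, the four statements follow by parity analysis. For $\underline{\lambda}_i \in \bZ$ (parts (1),(2)) $2\underline{\lambda}_i$ is even: if $\ellh$ is odd (part (1)) this forces $\underline{\lambda}_i \in \ellh\bZ$ and $r \in \oneh + \ellh\bZ$, while if $\ellh$ is even (part (2)) one has $\underline{\lambda}_i \in (\ellh/2)\bZ$, which splits into the two cosets $\ellh\bZ$ and $\ellh\bZ + \ellh/2$, giving respectively $r \in \oneh + \ellh\bZ$ and $r \in \nicefrac{(\ell+2)}{4} + \ellh\bZ$. For $\underline{\lambda}_i \in \bH$ (parts (3),(4)) $2\underline{\lambda}_i$ is odd: if $\ellh$ is odd (part (3)) this forces $\underline{\lambda}_i \in \ellh\bZ + \ellh/2$ and $r \in \nicefrac{(\ell+2)}{4} + \ellh\bZ$, while if $\ellh$ is even (part (4)) the fact that $\ellh\bZ$ consists entirely of even integers contradicts $2\underline{\lambda}_i$ odd, ruling out linkage. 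For the converse direction required in part (1), given $\underline{\lambda}_i = m\ellh$ one checks directly that $s_{\beta_i, m}\underline{\nu} = \underline{\nu} - (2\underline{\lambda}_i + 2 - m\ell)\varepsilon_i = \underline{\nu} - 2\varepsilon_i = \underline{\mu}$, establishing the iff.

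The main obstacle is the subcase $\gamma_1 = \beta_i$ with $\underline{\lambda}_i \in \bH$: here $m_1\ell = 2\underline{\lambda}_i \pm 1$ and the first reflection lands on either $\underline{\lambda}$ or $\underline{\lambda} - 2\varepsilon_i$ rather than on $\underline{\mu}$, so one is forced to track the continuation of the minimal gallery through hyperplanes of mixed type, using the same careful three-sub-subcase bookkeeping as in the $\gamma_p = \beta_i$ branch of the proof of Lemma \ref{lem:linkedEFsamestartodd} but with the $\ell$-versus-$\ellh$ scaling threaded through each parity check. Once this subcase is handled, the remainder reduces to the routine parity computation summarised above.
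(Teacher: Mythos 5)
Your overall strategy is the same as the paper's (the paper itself only sketches this proof by reducing to the argument of Lemma \ref{lem:linkedEFtypesameBodd}), and your parity endgame --- deriving all four parts from a single congruence $2\underline{\lambda}_i\in\ellh\bZ$ --- together with the explicit converse for part (1) is correct and in fact cleaner than the paper's case-by-case sketch. The genuine gap is the branch you yourself flag as ``the main obstacle'': $\gamma_1=\beta_i$ with $\underline{\lambda}_i\in\bH$, which is exactly the branch relevant to parts (3) and (4). You assert beforehand that \emph{every} branch delivers $2\underline{\lambda}_i\in\ellh\bZ$, but in this branch that is literally false: there $m_1\ell=2\underline{\lambda}_i\pm1$, and combining this with $2\underline{\lambda}_i\in\ellh\bZ$ would force $\ellh\mid 1$. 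So the branch cannot be made to ``reduce to the same shape''; it must be shown to be impossible outright, and the three-sub-subcase bookkeeping you point to does not do this --- in Lemma \ref{lem:linkedEFtypesameBodd} that analysis treats a $\beta_i$-hyperplane occurring \emph{later} in a gallery that starts with $\beta_{ij}^-$, which is a different configuration from a gallery whose \emph{first} wall is a $\beta_i$-hyperplane not containing the midpoint $\underline{\lambda}$.

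The missing argument is short. Since $\ell>3$, at most one hyperplane $H_{\beta_i,m}$ can be strictly between $A_{\underline{\mu}}$ and $A_{\underline{\nu}}$ (two of them would force $\ell\mid 2$), and no hyperplane for $\beta_j$ or $\beta_{jk}^\pm$ with $j,k\neq i$ can be between them at all; hence every crossed hyperplane $H_q$ with $q>1$ is of the form $H_{\beta_{ij}^\pm,m_q}$ and, being strictly between, contains $\underline{\lambda}$. Each such reflection therefore acts on $y=\underline{x}-\underline{\lambda}$ as the linear reflection $s_{\beta_{ij}^\pm}$ and in particular preserves the distance to $\underline{\lambda}$ with respect to $(-,-)$. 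But $s_{\beta_i,m_1}\underline{\nu}$ equals $\underline{\lambda}$ (if $m_1\ell=2\underline{\lambda}_i+1$) or $\underline{\lambda}-2\varepsilon_i$ (if $m_1\ell=2\underline{\lambda}_i-1$), at distance $0$ respectively $2$ from $\underline{\lambda}$, whereas $\underline{\mu}=\underline{\lambda}-\varepsilon_i$ is at distance $1$; so the composite of the reflections along the gallery can never send $\underline{\nu}$ to $\underline{\mu}$, a contradiction. With this branch ruled out, your key congruence does hold in all surviving branches and the rest of your argument goes through.
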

\begin{proof}
We only sketch the proof here, since for each situation one has to go through the same procedure as in the proof of Lemma \ref{lem:linkedEFtypedifferentBodd}. 

For $\ellh$ odd and $\underline{\lambda} \in X^{1,+}_\rho$ the proof is nearly word for word the same, hence the result is also the same. 

For $\ellh$ even and $\underline{\lambda} \in X^{1,+}_\rho$ one obtains that $r \in \oneh + (\ellh)\bZ$ in case of the minimal gallery having length one and $\gamma_1=\beta_i$. But in the remaining cases that $\gamma_1$ is different one obtains either $r \in \oneh + (\ellh)\bZ$ if $m_1+m_p$ appearing in the proof of Lemma \ref{lem:linkedEFtypedifferentBodd} is even or $r \in \nicefrac{(\ell+2)}{4} + (\ellh)\bZ$ if $m_1+m_p$ is odd. In case that $r \in \nicefrac{(\ell+2)}{4} + (\ellh)\bZ$ one does not obtain the converse. The argument using a reflection at a hyperplane for $\beta_i$ does not work here, since $\ell_{\beta_i}=\ell$. 

In the case $\ellh$ odd and $\underline{\lambda} \in X^{\oneh,+}_\rho$ the case of a minimal gallery of length $1$ gives a contradiction, while the other cases give $r \in \nicefrac{\ell+2}{4} + (\ellh)\bZ$. Note that if $\ellh$ is odd, $\nicefrac{\ell+2}{4}$ is an integer, so the $r$ is well-defined.

Finally, the case $\ellh$ even and $\underline{\lambda} \in X^{\oneh,+}_\rho$ with the assumption of linked weights always gives a contradiction.
\end{proof}

Thus we are left with comparing $\underline{\lambda}$ and the image under one of the operators $\re_i$ or $\rf_i$. In the case of $\underline{\lambda} \in X^{1,+}_\rho$ the situation becomes quite easy.

\begin{lemma} \label{lem:linkedEIdtypeBeven1}
Let $\underline{\lambda} \in X^{1,+}_\rho$ and $\ell$ is even. Assume that $\underline{\mu}=\re_r\underline{\lambda}$ is defined. Then $\mu$ and $\lambda$ are not linked.
Assume that $\underline{\nu}=\rf_r\underline{\lambda}$ is defined. Then $\nu$ and $\lambda$ are not linked.
\end{lemma}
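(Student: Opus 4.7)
The plan is to mirror the proof of Lemma \ref{lem:linkedEIdtypeBodd1}, since the only substantive difference will lie in a single parity computation at the end.

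Fix $i$ with $\underline{\lambda}_i = r+\oneh$, so that $\underline{\mu} = \underline{\lambda}-\varepsilon_i$. Assuming $\lambda$ and $\mu$ are linked, I invoke Lemma \ref{lem:goodminimalgallery} to obtain alcoves $A_{\underline{\lambda}}$ and $A_{\underline{\mu}}$ together with a minimal gallery $\Gamma$ crossing only hyperplanes strictly between the two alcoves. Since $\underline{\lambda}\neq\underline{\mu}$, the gallery $\Gamma$ has positive length, so at least one strictly-between hyperplane $H$ exists.

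For any positive root $\beta$ one has $(\underline{\lambda}-\underline{\mu},\beta^\vee)=(\varepsilon_i,\beta^\vee)$, whose absolute value is at most $1$ unless $\beta=\beta_i$, in which case it equals $2$. Hence $H$ must be of the form $H_{\beta_i,m}$ for some $m\in\bZ$, and the strict-betweenness condition, together with $(\underline{\lambda},\beta_i^\vee)=2\underline{\lambda}_i$ and $(\underline{\mu},\beta_i^\vee)=2\underline{\lambda}_i-2$, forces the equation $m\,\ell_{\beta_i} = 2\underline{\lambda}_i-1$.

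The parity clash now delivers the contradiction: in type $B$ the root $\beta_i=\varepsilon_i$ is short, so $d_{\beta_i}=1$ and hence $\ell_{\beta_i}=\ell/\gcd(\ell,1)=\ell$, which is even by hypothesis; on the other hand $\underline{\lambda}\in X^{1,+}_\rho$ forces $\underline{\lambda}_i\in\bZ$, making $2\underline{\lambda}_i-1$ odd. So no such $H$ can exist, and consequently $\lambda$ and $\mu$ cannot be linked. For the $\rf_r$ statement, I note that $\rf_r\underline{\lambda}=\underline{\nu}$ is equivalent to $\re_r\underline{\nu}=\underline{\lambda}$, so the identical argument applied with $\underline{\nu}$ in place of $\underline{\lambda}$ concludes. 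The only mildly delicate point, already present in the odd case, is justifying the positive length of $\Gamma$, handled exactly as in Lemma \ref{lem:linkedEIdtypeBodd1} by the choice freedom in Lemma \ref{lem:goodminimalgallery}; no new technique is required beyond the odd-case template.
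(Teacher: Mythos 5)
Your proof is correct and follows essentially the same route as the paper's: reduce to the root $\beta_i$ as the only one with $|(\underline{\lambda}-\underline{\mu},\beta^\vee)|>1$, extract the equation $2\underline{\lambda}_i=1+m\ell$ from the unique strictly-between hyperplane, and conclude by parity, with the $\rf_r$ case handled via $\re_r\underline{\nu}=\underline{\lambda}$. Your explicit identification $\ell_{\beta_i}=\ell$ (via $\beta_i$ short, $d_{\beta_i}=1$) is the correct reading and matches what the paper's own computation uses, even though the prose in the type $B$ setup states the $d_\beta$ assignment the other way around.
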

\begin{proof}
Fix $i$ via $\underline{\mu}=\underline{\lambda}-\varepsilon_i$ and $r+\oneh = \underline{\lambda}_i$. Then as in the proof of Lemma \ref{lem:linkedEIdtypeBodd1} one only has to consider the root $\beta_i$. And one assumes that the weights are linked with hyperplane $H_{\beta_i,m}$ strictly between the alcoves having to exist. Hence one obtains
\[
(\underline{\lambda},\beta_i^\vee)=2\underline{\lambda}_i=1+m\ell
\]
for some $m\in\bZ$. But $2\underline{\lambda}_i$ is an even integer, while $1+m\ell$ is always odd, hence this is a contradiction.

Again the second case follows since if $\underline{\nu}=\rf_r\underline{\lambda}$ is defined, then $\re_r\underline{\nu}=\underline{\lambda}$ holds.
\end{proof}

In case of $\underline{\lambda} \in X^{\oneh,+}_\rho$, the situation resembles the case of $\ell$ odd.

\begin{lemma} \label{lem:linkedEIdtypeBeven2}
Let $\underline{\lambda} \in X^{\oneh,+}_\rho$ and $\ell$ is even. Assume that $\underline{\mu}=\re_r\underline{\lambda}$ is defined. Then $\mu$ and $\lambda$ are linked if and only if $r \in (\ellh)\bZ$.

Assume that $\underline{\nu}=\rf_r\underline{\lambda}$ is defined. Then $\nu$ and $\lambda$ are linked if and only if $r \in (\ellh)\bZ$.
\end{lemma}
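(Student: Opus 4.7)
The plan is to follow the strategy of Lemma \ref{lem:linkedEIdtypeBodd2} almost verbatim, adjusting only for the parity of $\ell$. First I would fix $i$ by $\underline{\lambda}_i = r+\oneh$ so that $\underline{\mu} = \underline{\lambda}-\varepsilon_i$, and assume $\mu$ and $\lambda$ are linked. Using Lemma \ref{lem:goodminimalgallery} I would pick alcoves $A_{\underline{\lambda}}$ and $A_{\underline{\mu}}$ admitting a minimal gallery whose crossed walls are strictly between them.

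Next I would pin down which positive roots can contribute such a wall. Since $\underline{\lambda}-\underline{\mu} = \varepsilon_i$, the pairing $(\underline{\lambda}-\underline{\mu},\beta^\vee)=(\varepsilon_i,\beta^\vee)$ has absolute value at most $1$ except when $\beta = \beta_i$, where it equals $2$. For every other positive root $\beta$, the values $(\underline{\mu},\beta^\vee)$ and $(\underline{\lambda},\beta^\vee)$ are either equal or consecutive integers, so no integer multiple of $\ell_\beta$ can fit strictly between them. Hence the gallery must cross some $H_{\beta_i,m}$. Since $\beta_i = \varepsilon_i$ lies in the Weyl group orbit of the short simple root $\alpha_N$ (with $d_N=1$), we have $\ell_{\beta_i} = \ell$, which is the crucial numerical input distinguishing this lemma from Lemma \ref{lem:linkedEFsamestarteven}. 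The strictly-between condition together with $(\underline{\mu},\beta_i^\vee) = 2r-1$ and $(\underline{\lambda},\beta_i^\vee) = 2r+1$ forces $m\ell = 2r$, so $r = m\ell/2 = m\ellh \in \ellh\bZ$.

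For the converse, given $r = m\ellh$, a short direct computation using $(\underline{\mu},\beta_i^\vee) = m\ell - 1$ shows that $s_{\beta_i,m}\cdot \underline{\mu} = \underline{\mu} + \beta_i = \underline{\lambda}$, so $\mu$ and $\lambda$ are linked. The statement for $\rf_r$ follows immediately: if $\underline{\nu} = \rf_r \underline{\lambda}$ is defined, then $\underline{\nu} \in X^{\oneh,+}_\rho$ and $\re_r \underline{\nu} = \underline{\lambda}$, so applying the first half of the lemma with $\underline{\nu}$ playing the role of $\underline{\lambda}$ yields the same equivalence.

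The main obstacle is purely bookkeeping: verifying that no root besides $\beta_i$ admits a strictly-between hyperplane, and then checking that the parity of $2\underline{\lambda}_i$ (odd, since $\underline{\lambda}_i \in \bH$) is compatible with $1+m\ell$ (also odd when $\ell$ is even). This is precisely the point at which the $X^{\oneh,+}_\rho$ case diverges from Lemma \ref{lem:linkedEIdtypeBeven1}: there the parities conflicted and no linking could occur, whereas here they align and produce the arithmetic progression $\ellh \bZ$. Once the parity bookkeeping is settled, the rest is routine.
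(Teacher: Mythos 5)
Your proof is correct and follows essentially the same route as the paper's: reduce to the single root $\beta_i$ via the pairing $(\underline{\lambda}-\underline{\mu},\beta^\vee)$, use $\ell_{\beta_i}=\ell$ together with the odd parity of $2\underline{\lambda}_i$ to force $m\ell=2r$, reverse the computation with $s_{\beta_i,m}$ for the converse, and reduce the $\rf_r$ case to the $\re_r$ case via $\re_r\underline{\nu}=\underline{\lambda}$. In fact your write-up spells out the strictly-between bookkeeping that the paper's proof only cites from Lemmas \ref{lem:linkedEIdtypeBodd1} and \ref{lem:linkedEIdtypeBeven1}, so no gap remains.
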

\begin{proof}
We proceed like in the proof of Lemma \ref{lem:linkedEIdtypeBeven1} and obtain the same equation
\[
(\underline{\lambda},\beta_i^\vee)=2\underline{\lambda}_i=1+m\ell
\]
for some $m\in\bZ$. But now $2\underline{\lambda}_i$ is an odd integer, hence $\underline{\lambda} = \oneh + m\ellh$ which then implies $r\in(\ellh)\bZ$.

Conversely if $r\in(\ellh)\bZ$, then $(\underline{\lambda},\beta_i^\vee) = 2\underline{\lambda}_i=1+m\ell$ for some $m\in\bZ$, hence $s_{\beta_i,m}(\underline{\lambda}=\underline{\mu}$ and $\lambda$ and $\mu$ are linked.

As before the case for $\rf_r$ is done by using that $\re_r\underline{\nu}=\underline{\lambda}$ under the assumptions.
\end{proof}

\subsection{Quantum symmetric pair action for even $\ell$} \label{sec:quantumsympairactionBeven}
Throughout the section we make the assumption that
\[
\ell \text{ is even and } \ellh > 3.
\]
In this situation the action of the quantum symmetric pair depends on two different choices. First whether $\ellh$ is even or odd and second whether one is looking at weights in $X_\rho^{1,+}$ or $X_\rho^{\oneh,+}$. We just list the definitions of the linear operators in the different situations. Note that we obtain all possible quantum symmetric pairs for the affine type $A$ Dynkin diagram that we described before, i.e. we can have either two fixed labels, two $\Theta$-linked label pairs, or one of each and in that case we will see that they differ in the sense that one of them is a standard quantum symmetric pair, the other one is a non-standard quantum symmetric pair.

We are considering now $\bZ/(\ellh)\bZ$ and $\bH/(\ellh)\bZ$ with the usual automorphism $\Theta$.

\begin{definition} \label{def:quantumsymoperatorsB2}
Assume $\ell$ even. For $\underline{a} \in \mathcal{S}_{\bZ}$ let $\overline{p} \in \bH/(\ellh)\bZ$ and for $\underline{a} \in \mathcal{S}_{\bH}$ let $\overline{p} \in \bZ/(\ellh)\bZ$. Then define for $\overline{p}\neq\Theta(\overline{p})$
\begin{align*}
\OPB_{\overline{p}}\, \underline{a} &= v^{T_{\ellh}^{e-f}(-\overline{p},\underline{a})}\sum_{j \in \overline{p}} v^{R_{\ellh}^{e-f}(j,\underline{a})} \re_j\,\underline{a} + \sum_{j \in -\overline{p}} v^{L_{\ellh}^{f-e}(j,\underline{a})} \rf_j\,\underline{a}.\\
\OPL_{\overline{p}}\, \underline{a} &= v^{T_{\ellh}^{f-e}(\overline{p},\underline{a})}v^{T_{\ellh}^{e-f}(-\overline{p},\underline{a})} \underline{a}.
\end{align*}
For $\overline{p}=\Theta(\overline{p})$, but $0 \notin \overline{p}$ define
\[
\OPB_{\overline{p}} \underline{a} = v^{-1} v^{T_{\ellh}^{e-f}(\overline{p},\underline{a})}\sum_{j \in \overline{p}} v^{R_{\ellh}^{e-f}(j,\underline{a})} \re_j\underline{a} + \sum_{j \in \overline{p}} v^{L_{\ellh}^{f-e}(j,\underline{a})} \rf_j\underline{a},
\]
while for $0 \in \overline{i}$ $z \in \oneh + (\ellh)\bZ$ define
\begin{align*}
\OPB_{\overline{p}} \underline{a} &= v^{-1} v^{T_{\ellh}^{e-f}(\overline{p},\underline{a})}\sum_{j \in \overline{p}} v^{R_{\ellh}^{e-f}(j,\underline{a})} \re_j\underline{a} + \sum_{j \in \overline{p}} v^{L_{\ellh}^{f-e}(j,\underline{a})} \rf_j\underline{a} + v^{T_{\ellh}^{e-f}(\overline{p},\underline{a})}\underline{a}, \text{ and}\\
\OPB_{\overline{0}}^{[z]} \underline{a} &= v^{-1} v^{T_{\ellh}^{e-f}(\overline{0},\underline{a})}\sum_{j \in \overline{0}} v^{R_{\ellh}^{e-f}(j,\underline{a})} \re_j\underline{a} + \sum_{j \in \overline{0}} v^{L_{\ellh}^{f-e}(j,\underline{a})} \rf_j\underline{a} + \delta_{\underline{a}(z),0} v^{T_{\ellh}^{e-f}(\overline{0},\underline{a})}\underline{a}.
\end{align*}
This defines linear operators on $\cF$ and $\cF^{\oneh}$ that restrict to $\cF_N$ and $\cF_N^{\oneh}$ for any $N$.
\end{definition}

As one can see, in case that $\underline{a}\in \cS_{\bZ}$ and $\ellh$ is even there is no fixed index $\overline{p}\in\bH/(\ellh)\bZ$, instead there are two pairs of indices that are $\Theta$-linked, namely $\pm \overline{\oneh}$ and $\overline{\nicefrac{\ell \pm 2}{4}}$. 

In case that $\underline{a}\in \cS_{\bZ}$ and $\ellh$ is odd, the pair $\pm \overline{\oneh}$ is still $\Theta$-linked, but $\overline{\nicefrac{\ell}{4}}$ is a fixed index. But the quantum symmetric pair is a standard quantum symmetric pair, like in type $C$.

In case that $\underline{a}\in \cS_{\bH}$ and $\ellh$ is even, both $\overline{0}$ and $\overline{\nicefrac{\ell}{4}}$ are fixed, but the generator for $\overline{0}$ makes this a non-standard quantum symmetric pair like in type $B$ for odd $\ell$.

In case that $\underline{a}\in \cS_{\bH}$ and $\ellh$ is odd, we again have a non-standard operator for the fixed index $\overline{0}$ and a pair of $\Theta$-linked indices with $\overline{\nicefrac{\ell}{4}}$.

We skip the analogue of Lemma \ref{lem:comparetypeAB} in this case, it is just a combination of Lemmas \ref{lem:comparetypeA} and \ref{lem:comparetypeAB} depending on whether a generator for a fixed index is defined as in type $C$ or as in type $B$. Thus we get the corresponding actions

\begin{proposition} \label{prop:operator_relations_B2}
Assume $\ell$ even and $\rI \in \{\bZ/(\ellh)\bZ,\bH/(\ellh)\bZ\}$. The linear operators $\{\OPB_{\overline{p}} \mid \overline{p} \in \rI\}$ and $\{\OPL_{\overline{p}} \mid \overline{p} \in \rI, \Theta(\overline{p})\neq\overline{p}\}$ satisfy the relations of $B_v(\rI,\Theta)$ given in Definition \ref{def:quantumsymmpair} by substituting the operators for the generators with the same name.
\end{proposition}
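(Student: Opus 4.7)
The plan is to mimic the proofs of Propositions \ref{prop:operator_relations_C} and \ref{prop:operator_relations_B}. The crucial observation is that replacing $\ell$ by $\ellh$ throughout the counting statistics $R_{\ellh}^\bullet$, $L_{\ellh}^\bullet$, $T_{\ellh}^\bullet$ does not affect the structural form of any of the arguments: these statistics depend only on the choice of modulus used to group positions, while the moving operators $\re_j$, $\rf_j$ are themselves independent of $\ell$. As a first step I would record the obvious analogues of Lemmas \ref{lem:comparetypeA} and \ref{lem:comparetypeAB} for $\ellh$: each $\OPB_{\overline{p}}$ is again a sum of a Chevalley-type $\OPE$-term, a Chevalley-type $\OPF$-term and, when $\overline{p}$ is fixed, a diagonal $\OPK$-correction, with indices now taken in $\bZ/(\ellh)\bZ$ or $\bH/(\ellh)\bZ$.

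With this in hand, I would check the relations in the same order as before. The $\OPL_{\overline{p}}\OPL_{\overline{q}}$ relations are immediate from the diagonal definition, since inverse scalars are assigned to $\overline{p}$ and $\Theta(\overline{p})=-\overline{p}$. The commutation relations $\OPL_{\overline{q}}\OPB_{\overline{p}} = v^\bullet\OPB_{\overline{p}}\OPL_{\overline{q}}$ reduce to a short case analysis on whether $\overline{q}$ equals, is $\Theta$-paired with, is linked to, or is unrelated to $\overline{p}$, and use only the definition of $T_{\ellh}$-statistics. The commutation $\OPB_{\overline{p}}\OPB_{\overline{q}} = \OPB_{\overline{q}}\OPB_{\overline{p}}$ for indices neither linked nor $\Theta$-related follows because the underlying moving operators either act at disjoint positions or the compositions cancel. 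The $\Theta$-linked commutator identity is a short direct computation using the analogue of Lemma \ref{lem:comparetypeA}.

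For the Serre relations I would invoke locality: each Serre relation involves only two adjacent indices in the Dynkin diagram with $\ellh$ nodes, so its verification reduces to a rank-$2$ calculation. The standard/standard case is identical to the type $C$ computation in Proposition \ref{prop:operator_relations_C}, while the cases involving a fixed or a $\Theta$-linked neighbor reduce to the rank-$2$ relations of non-affine quantum symmetric pairs of type AIII in \cite[Propositions 7.17 and 7.18]{ES}, building on \cite{Letzter2}. The hypothesis $\ellh>3$ guarantees that the two ``exceptional'' regions of the Dynkin diagram do not collide, so no small-rank degeneration occurs.

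The main obstacle I anticipate is the bookkeeping needed to handle all four combinations arising from the parity of $\ellh$ together with the choice of index set $\cS_{\bZ}$ versus $\cS_{\bH}$, since each combination produces a different pattern of fixed indices and $\Theta$-linked pairs, as summarised after Definition \ref{def:quantumsymoperatorsB2}. For each configuration one must identify which rank-$2$ model from \cite{ES} applies at each exceptional region: the fixed generator $\OPB_{\overline{p}}$ with $0\in\overline{p}$ in the $\cS_{\bH}$ case carries an extra diagonal summand and matches the non-standard model of \cite[Proposition 7.18]{ES}, whereas all other fixed indices match the standard model already used in type $C$. Once these matchings are set, each rank-$2$ verification is a short direct calculation essentially identical to one done in Propositions \ref{prop:operator_relations_C} and \ref{prop:operator_relations_B}, and assembling them yields the proposition.
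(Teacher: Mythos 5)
Your proposal is correct and follows essentially the same route as the paper, which disposes of this proposition in one line by observing that each operator is defined either as in type $C$ (Proposition \ref{prop:operator_relations_C}) or as in type $B$ with $\ell$ odd (Proposition \ref{prop:operator_relations_B}), so every relation reduces to one already verified there after replacing $\ell$ by $\ellh$. Your additional bookkeeping of the four parity/index-set configurations and the matching of each exceptional region to the rank-$2$ models of \cite{ES} is exactly the implicit content of the paper's argument, spelled out.
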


Depending on the definition of the operators this follows either from the situation in type $C$ or from the case $\ell$ odd in type $B$. Then the corresponding theorems for the action and the decomposition of the tensor product can then be immediately formulated as follows.

\begin{theorem} \label{thm:actionanddecompB3}
Assume $\ell$ even. There exists an action of $B_v(\bH/(\ellh)\bZ,\Theta)$ on $\cF_N$ such that for $\lambda \in X^{1,+}$ the decomposition of $[\Delta_{\mathrm{q}}(\lambda) \otimes \Delta_{\mathrm{q}}(\omega_1)]$ in $[\Uqmod]$ with respect to the classes of Weyl modules is obtained from
\[
\sum_{\overline{p}\in \bH/(\ellh)\bZ} \OPB_{\overline{p}}\underline{\lambda} + \underline{\lambda}
\]
by projecting onto the subspace $\cF(C_N)$ and evaluating $v=1$. 

Furthermore if $[\Delta_{\mathrm{q}}(\mu)]$ and $[\Delta_{\mathrm{q}}(\nu)]$ appear in the decomposition for $\mu$ and $\nu$ linked, then there exists a unique $\overline{p}\in \bH/\ell\bZ$ such that $\underline{\mu}$ and $\underline{\nu}$ appear in $\OPB_{\overline{p}}\underline{\lambda}$.
\end{theorem}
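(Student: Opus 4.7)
The plan is to mirror the proof of Theorem \ref{thm:actionanddecompB1}, adjusting for the three substantive differences in the even-$\ell$ setting: the index group becomes $\bH/(\ellh)\bZ$, the quantum symmetric pair relations are supplied by Proposition \ref{prop:operator_relations_B2} rather than Proposition \ref{prop:operator_relations_B}, and the linkage analysis uses the even-$\ell$ lemmas of the preceding subsection.

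First, the existence of the action on $\cF_N$ is immediate from Proposition \ref{prop:operator_relations_B2}. Next, I would translate Proposition \ref{prop:tensorproductB} into the Grothendieck group via Proposition \ref{prop:tensorproductdecomp}. Crucially, for $\lambda\in X^{1,+}$ all entries $\lambda_i$ lie in $\bH$, so $\lambda_N\geq\oneh>0$ and the factor $\delta_{\mathrm{pos}}$ equals $1$ always; this is precisely what accounts for the extra $+\,\underline{\lambda}$ in the statement. The remaining summands $[\Delta_{\mathrm{q}}(\lambda\pm\varepsilon_i)]$ correspond under the embedding of Section \ref{sec:fockspaceB} to applying $\re_r$ (with $r=\underline{\lambda}_i-\oneh$) or $\rf_r$ (with $r=\underline{\lambda}_i+\oneh$) to $\underline{\lambda}$, by the type-$B$ analogue of Lemma \ref{lem:nearlystable}. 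Inspection of Definition \ref{def:quantumsymoperatorsB2} then shows that every such non-zero move occurs as a summand of exactly one generator $\OPB_{\overline{p}}\,\underline{\lambda}$: for the $\re_r$-part one has $r\in\overline{p}$, and for the $\rf_r$-part one has $r\in-\overline{p}$. Summing over $\overline{p}$ and specialising $v=1$ then reproduces the tensor product decomposition.

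For the uniqueness claim concerning a linked pair $\mu,\nu$ both appearing in the decomposition, I would collate the four linkage lemmas of the preceding subsection. Lemma \ref{lem:linkedEorFtypeBeven} forces $r\equiv s\pmod{\ellh}$ when both $\underline{\mu},\underline{\nu}$ arise from operators of the same type; Lemma \ref{lem:linkedEFtypedifferentBeven} forces $r\equiv -s\pmod{\ellh}$ in the generic mixed case; Lemma \ref{lem:linkedEFtypesameBeven1} treats the delicate pair $(\re_{r-1},\rf_r)$; and Lemma \ref{lem:linkedEIdtypeBeven1} guarantees that $\underline{\lambda}$ itself is never linked to any $\re_r\underline{\lambda}$ or $\rf_r\underline{\lambda}$, so the additional $+\,\underline{\lambda}$ can never be duplicated by an $\OPB_{\overline{p}}\,\underline{\lambda}$ summand. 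In each of the constrained congruences there is then a unique coset $\overline{p}\in\bH/(\ellh)\bZ$ whose generator carries both relevant moves.

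The main obstacle is Lemma \ref{lem:linkedEFtypesameBeven1}(2) in the case $\ellh$ even, where the linked index $r$ may lie in either $\oneh+(\ellh)\bZ$ or $\nicefrac{\ell+2}{4}+(\ellh)\bZ$. The verification needed is that these two congruence classes match the two distinct $\Theta$-linked pairs of nodes in the Dynkin diagram for $\bH/(\ellh)\bZ$ with $\ellh$ even, namely $\{\overline{\oneh},-\overline{\oneh}\}$ and $\{\overline{\nicefrac{\ell-2}{4}},\overline{\nicefrac{\ell+2}{4}}\}$, so that each allowable $r$ unambiguously selects a single generator $\OPB_{\overline{p}}$. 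The $\ellh$ odd subcase is cleaner, featuring one $\Theta$-linked pair and one fixed index, and the arguments run parallel to the proof of Theorem \ref{thm:actionanddecompB1}.
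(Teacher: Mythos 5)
Your proposal is correct and follows essentially the same route as the paper, which simply declares the theorem the exact analogue of Theorem \ref{thm:actionanddecompB1} and explains the extra $+\,\underline{\lambda}$ by the observation you make implicitly: for $\overline{p}\in\bH/(\ellh)\bZ$ no generator $\OPB_{\overline{p}}$ carries a diagonal summand (there is no fixed index when $\ellh$ is even, and the fixed-index generator is standard when $\ellh$ is odd), while $\delta_{\mathrm{pos}}=1$ forces $[\Delta_{\mathrm{q}}(\lambda)]$ to occur, so it must be added by hand. Your resolution of the Lemma \ref{lem:linkedEFtypesameBeven1}(2) subtlety --- matching the two congruence classes $\oneh+(\ellh)\bZ$ and $\nicefrac{\ell+2}{4}+(\ellh)\bZ$ with the two $\Theta$-linked pairs $\pm\overline{\oneh}$ and $\overline{\nicefrac{\ell\pm 2}{4}}$ --- is exactly the intended reading.
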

\begin{proof}
This is the exact analogue of Theorem \ref{thm:actionanddecompB1}. But in case of $\ellh$ even there is no operator that produces a multiple of $\underline{\lambda}$ and in case of $\ellh$ odd the linear operator for the fixed index is standard, i.e. also does not produce a multiple of $\underline{\lambda}$. But since $\lambda \in X^{1,+}$, the tensor product always includes the class of $[\Delta_{\mathrm{q}}(\lambda)]$ in the Grothendieck group, hence one has to naively add it.
\end{proof}

And the analogue of Theorem \ref{thm:actionanddecompB2} is the following.

\begin{theorem} \label{thm:actionanddecompB4}
Assume $\ell$ even. There exists an action of $B_v(\bZ/(\ellh)\bZ,\Theta)$ on $\cF_N^{\oneh}$ such that for $\lambda \in X^{\oneh,+}$ the decomposition of $[\Delta_{\mathrm{q}}(\lambda) \otimes \Delta_{\mathrm{q}}(\omega_1)]$ in $[\Uqmod]$ with respect to the classes of Weyl modules is obtained from
\[
\sum_{\overline{p}\in \bZ/(\ellh)\bZ} \OPB_{\overline{p}}\underline{\lambda}
\]
if $\lambda_N>0$ and from
\[
\sum_{\overline{p}\in \bZ/(\ellh)\bZ,\overline{p}\neq\overline{0}} \OPB_{\overline{p}}\underline{\lambda} + \OPB_{\overline{0}}^{[\oneh]}\underline{\lambda}
\]
if $\lambda_N=0$, by projecting onto the subspace $\cF(C_N)$ and evaluating $v=1$. 

If $[\Delta_{\mathrm{q}}(\mu)]$ and $[\Delta_{\mathrm{q}}(\nu)]$ appear in the decomposition for $\mu$ and $\nu$ linked, then there exists a unique $\overline{p}\in \bZ/\ell\bZ$ such that $\underline{\mu}$ and $\underline{\nu}$ appear in $\OPB_{\overline{p}}\underline{\lambda}$.
\end{theorem}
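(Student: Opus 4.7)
The theorem is the even-$\ell$ analogue of Theorem \ref{thm:actionanddecompB2}, so the plan is to follow that proof and substitute the even-$\ell$ ingredients. The existence of the $B_v(\bZ/(\ellh)\bZ,\Theta)$-action on $\cF^{\oneh}_N$ is exactly Proposition \ref{prop:operator_relations_B2}, and via Proposition \ref{prop:tensorproductdecomp} the desired Grothendieck-group identity reduces to the generic tensor product formula of Proposition \ref{prop:tensorproductB}. Concretely, one must show that at $v=1$ the sum $\sum_{\overline{p}\in\bZ/(\ellh)\bZ} \OPB_{\overline{p}}\underline{\lambda}$ (respectively its $\OPB_{\overline{0}}^{[\oneh]}$-modification) recovers exactly the summands $\underline{\lambda \pm \varepsilon_i}$ for those $i$ with $\lambda \pm \varepsilon_i \in X^+$, together with a $\underline{\lambda}$ iff $\lambda_N > 0$.

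For the matching at $v=1$, the plan is to invoke the $X^{\oneh,+}$-version of Lemma \ref{lem:nearlystable}: applying $\re_j$ to $\underline{\lambda}$ produces the sequence $\underline{\lambda - \varepsilon_i}$ precisely when $j = \underline{\lambda}_i - \oneh$ and $\lambda - \varepsilon_i$ is dominant, and likewise for $\rf_j$, with the only operator potentially leaving $\cF^{\oneh}(B_N)$ being $\rf_0$. Evaluating at $v=1$ collapses all the weight prefactors $v^R$, $v^L$, $v^T$ to $1$, so each neighbouring dominant weight appears with multiplicity one. The only subtlety lies in the fixed index $\overline{0}$: the operator $\OPB_{\overline{0}}$ contains an additional summand of the form $v^{T}\underline{\lambda}$ that contributes $\underline{\lambda}$ at $v=1$, whereas $\OPB_{\overline{0}}^{[\oneh]}$ is designed to kill this summand exactly when $\underline{a}_{\underline{\lambda}}(\oneh)=1$. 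Since $\underline{\lambda}_N = \oneh$ is equivalent to $\lambda_N = 0$, this is exactly the case $\delta_{\mathrm{pos}}=0$ of Proposition \ref{prop:tensorproductB}, so using $\OPB_{\overline{0}}^{[\oneh]}$ when $\lambda_N=0$ and $\OPB_{\overline{0}}$ when $\lambda_N > 0$ yields the correct multiplicity of $\underline{\lambda}$.

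For the uniqueness claim about linked weights, one invokes the even-$\ell$ linkage lemmas: Lemma \ref{lem:linkedEorFtypeBeven} for pairs coming from two $\re$-moves or two $\rf$-moves, Lemma \ref{lem:linkedEFtypedifferentBeven} for pairs from one $\re$- and one $\rf$-move at distinct positions, parts (3) and (4) of Lemma \ref{lem:linkedEFtypesameBeven1} for the simultaneous $\re_{r-1},\rf_r$ case in $X^{\oneh,+}$, and Lemma \ref{lem:linkedEIdtypeBeven2} for comparing $\underline{\lambda}$ itself with any single-move image. In each case the governing congruence is modulo $\ellh$, which is precisely the modulus defining the index set $\bZ/(\ellh)\bZ$ of generators, so the unique $\overline{p}$ placing both $\underline{\mu}$ and $\underline{\nu}$ into the support of $\OPB_{\overline{p}}\underline{\lambda}$ is forced. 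The main obstacle is the careful bookkeeping around the fixed index and the parity of $\ellh$ (which determines whether there is an additional fixed index or a pair of $\Theta$-linked indices at the opposite end of the Dynkin diagram); but since only $\overline{0}$ generates an extra $v^T\underline{\lambda}$ summand in its defining formula, the $\OPB_{\overline{0}}^{[\oneh]}$ correction is sufficient and no further adjustment at the second special index is required.
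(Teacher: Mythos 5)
Your proposal is correct and follows essentially the same route as the paper, which leaves the proof of Theorem \ref{thm:actionanddecompB4} implicit by analogy with Theorems \ref{thm:actionanddecompC}, \ref{thm:actionanddecompB1} and \ref{thm:actionanddecompB2}: the action comes from Proposition \ref{prop:operator_relations_B2}, the $v=1$ matching from Propositions \ref{prop:tensorproductB} and \ref{prop:tensorproductdecomp} together with the sequence combinatorics, and the uniqueness from Lemmas \ref{lem:linkedEorFtypeBeven}, \ref{lem:linkedEFtypedifferentBeven}, \ref{lem:linkedEFtypesameBeven1} and \ref{lem:linkedEIdtypeBeven2}. You also correctly identify the one point the paper singles out in its remark after the theorem, namely that only the fixed index $\overline{0}$ (and not $\overline{\nicefrac{\ell}{4}}$) carries the extra $\underline{\lambda}$ summand requiring the $\OPB_{\overline{0}}^{[\oneh]}$ modification.
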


Note in case of $\ellh$ even and for $\lambda \in X^{\oneh,+}$, there exist the operator $\OPB_{\overline{\nicefrac{\ell}{4}}}$ that has a fixed index, but it is of the same form as in type $C$, hence does not produce a multiple $\underline{\lambda}$. Hence only $\OPB_{\overline{0}}\underline{\lambda}$ needs to be replaced by $\OPB_{\overline{0}}^{[\oneh]}\underline{\lambda}$.

We skip the discussion of an analogues of Proposition \ref{prop:operatorsasdecomp} and Proposition \ref{prop:operatorsasdecompBhint}. The construction is analogous to type $C$ for the space $\cF_k$ and analogous, including the same manipulations, as in type $B$ (with $\ell$ odd) for $\cF_k^{\oneh}$.

\section{Type D and beyond} \label{sec:typeDbeyond}

The type $D$ case is part of the second authors Master thesis and the details will be published separately. We give a rough summary of what happens in this situation.

In type $D_N$ ($N>3$), the choices of $\varepsilon_i$ and the invariant bilinear form are precisely the same as in type $B$. The positive roots can be chosen as 
\[
\Phi^+ = \{ \beta_{i,j}^\pm=\varepsilon_i\pm\varepsilon_j \mid 1 \leq i < j \leq N \}.
\]
Note that all roots are short and hence equal to their own coroots and $\ell_\beta=\ell$ for all positive roots. Integral weights have a similar structure as in type $B$ with
\begin{align*}
X^{1,+} &= \{ (\lambda_1,\ldots,\lambda_N) \mid \lambda_i \in \bZ, \lambda_i \geq \lambda_{i+1}, \text{ and } \lambda_{N-1} \geq |\lambda_N|\} \text{ and}\\
X^{\oneh,+} &= \{ (\lambda_1,\ldots,\lambda_N) \mid \lambda_i \in \bH, \lambda_i \geq \lambda_{i+1}, \text{ and } \lambda_{N-1} \geq |\lambda_N|\}.
\end{align*}
With $\rho = (N-1)\varepsilon_1+(N-2)\varepsilon_2 +\ldots + \varepsilon_{N-1} \in X^{\oneh,+}$ we thus get
\begin{align*}
X^{1,+}_\rho &= \{ (\underline{\lambda}_1,\ldots,\underline{\lambda}_N) \mid \underline{\lambda}_i \in \bH,\, \underline{\lambda}_i > \underline{\lambda}_{i+1}, \text{ and } \lambda_{N-1} \geq |\lambda_N|\},\\
X^{\oneh,+}_\rho &= \{ (\underline{\lambda}_1,\ldots,\underline{\lambda}_N) \mid \underline{\lambda}_i \in \bZ,\, \underline{\lambda}_i > \underline{\lambda}_{i+1}, \text{ and } \lambda_{N-1} \geq |\lambda_N|\}.
\end{align*}
In analogy to type $B$, the combinatorial Fock space of type $D$ decomposes into two summands $\cF^{\oneh}(D_N)$ and $\cF^{1}(D_N)$. Furthermore the tensor product decomposition in type $D$ is the same as in type $C$ in Proposition \ref{prop:tensorproductC}, especially there is no complication with a $\Delta_v(\lambda)$ summand as in type $B$.

\begin{remark} \label{rem:signedsquences}
The condition for $\lambda$ to be dominant does not depend on $\lambda_N$ itself, but rather on its absolute value. Thus $(\lambda_1,\ldots,\lambda_{N-1},\lambda_N)+\varepsilon_N$ is dominant if and only if $(\lambda_1,\ldots,\lambda_{N-1},-\lambda_N)-\varepsilon_N$ is dominant. In terms of sequences this can be interpreted as saying that a $1$ should be placed at position $|\underline{\lambda}_N|$ and the sign of $\lambda_N$ needs to be recorded as well. 
\end{remark}

In case of $X^{\oneh,+}_\rho$, following Remark \ref{rem:signedsquences} one sees that one can embed $\cF^{\oneh}(D_N)$ into $\cF_N^{\oneh,+}\oplus \cF_N^{\oneh,-}$, where both 
spaces are isomorphic to $\cF_N^{\oneh}$. For this let $\underline{\lambda}^\prime$ be equal to $\underline{\lambda}$ except $\underline{\lambda}^\prime_N=|\underline{\lambda}_N|$. Then $\underline{\lambda}$ is mapped to $(\underline{a}_{\underline{\lambda}},0)$ if $\underline{\lambda}_N>0$ and to $(0,\underline{a}_{\underline{\lambda}^\prime})$ otherwise.
The moving operators from Definition \ref{def:movingoperators} are defined on $\cF_N^{\oneh,+}\oplus \cF_N^{\oneh,-}$, since they are defined on $\cF_N^{\oneh}$.

The interplay between moving operators and linkage, is analogous to types $C$. Hence the operators $\OPB_{\overline{p}}$ and $\OPL_{\overline{p}}$ are given as in Definition \ref{def:quantumsymoperatorsC}, up to changing the index set from $\bH/\ell\bZ$ to $\bZ/\ell\bZ$, and give an action of $B_v(\bZ/\ell\bZ,\Theta)$.

In addition, one introduces an operator $\OPB$ that goes between $\cF_N^{\oneh,+}$ and $\cF_N^{\oneh,-}$, since $(\lambda_1,\ldots,\lambda_{N-1},\oneh)-\varepsilon_N=(\lambda_1,\ldots,\lambda_{N-1},-\oneh)$, but both are represented by the same sequence, but in different components. One defines $\OPB(\underline{a},0)=(0,\underline{a})$ if $\underline{a}(\oneh)=1$ and the operator is zero otherwise, and similarly $\OPB(0,\underline{a})=(\underline{a},0)$ if $\underline{a}(\oneh)=1$ and zero otherwise. It follows that only $\OPB$ and $\OPB_{\overline{0}}$ can produce linked weights, hence one needs to consider $\OPB_{\overline{0}} + \OPB$.

The analogue of Theorem \ref{thm:actionanddecompB2} holds, with the difference that $\OPB_{\underline{0}}$ is replaced by $\OPB_{\overline{0}} + \OPB$. The limit construction from Proposition \ref{prop:operatorsasdecomp} does not work in type $D$ as one cannot regard a dominant weight $\lambda$ with $\lambda_N<0$ as a dominant weight for a bigger rank. If one restricts to this case then one essentially recovers the combinatorics of type $C$. 

The case of $X^{1,+}_\rho$ brings additional complications with it, since $\lambda_N=0$ does not allow for a ``good'' embedding into a sum of Fock spaces $\cF^{1,+} \oplus \cF^{1,-}$, where these spaces are defined as in the half-integer case. One instead embeds $\underline{\lambda}$ with $\lambda_N=0$ diagonally. This allows for an analogue of Theorem \ref{thm:actionanddecompC}, but depending on the starting weight, $(\underline{\lambda},0)$, $(0,\underline{\lambda})$ or $(\underline{\lambda},\underline{\lambda})$ represents the class of the Weyl module $[\Delta_{\mathrm{q}}(\lambda)]$ and similar for the occurring Weyl modules in the decompositions. The limit construction does not work for the same reason as above.

\begin{remark}
As a concluding remark in type $D$ one can note that the combinatorial calculations are easier than in type $B$ and $C$, since it is of simply-laced type. But the action of the quantum symmetric pair is further away from the combinatorics of the tensor product multiplicities and the limit construction is not possible.

This kind of complication in type $D$ has some analogy in \cite{ES}. Here the type $D$ situation is investigated in terms of category $\cO$ and certain generators of the quantum symmetric pair do not act as indecomposable translation functors, while in the analogous situation in type $B$ they would.
\end{remark}

We focused in this paper on non-exceptional Lie algebras. The general ideas can of course be transferred to exceptional cases as well. There is in general no clear analogue of the representation $\Delta_{\mathrm{q}}(\omega_1)$, i.e. a ``natural'' representation. For example, in type $G_2$ one can use the specialization and quantization of the natural representation of $\mathfrak{so}_8(\bC)$, since $\mathfrak{so}_8(\bC)$ contains the Lie algebra of type $G_2$ as a fixed point Lie algebra. But there is nothing analogous for the other exceptional types.

As for the analogue of the quantum symmetric pair, it is reasonable to expect that in type $G_2$ the acting algebra has some relationship to the quantum symmetric pair and an automorphism of order $3$. This is in analogy to the fact that the quantum symmetric pair is constructed from the quantum affine algebra using an automorphism of order $2$. It is a priori not clear what the corresponding object in other exceptional cases would look like or come from.




\end{document}